
\documentclass[11pt]{amsart}

\usepackage{amsmath}
\usepackage{amsthm}
\usepackage{hyperref}
\usepackage[margin=1.0in]{geometry}
\usepackage[T1]{fontenc}
\usepackage[applemac]{inputenc}
\usepackage{MnSymbol}
\usepackage{xcolor}

\numberwithin{equation}{section}

\newtheorem{prop}{Proposition}
\newtheorem{lemma}[prop]{Lemma}

\newtheorem{thm}[prop]{Theorem}
\newtheorem{cor}[prop]{Corollary}
\newtheorem{conj}[prop]{Conjecture}
\numberwithin{prop}{section}

\theoremstyle{definition}
\newtheorem{defn}[prop]{Definition}

\newtheorem{rmk}[prop]{Remark}

%
\DeclareSymbolFont{script}{U}{eus}{m}{n}
\DeclareSymbolFontAlphabet{\mathscr}{script}
\DeclareMathSymbol{\Wedge}{0}{script}{"5E}
\DeclareMathAlphabet{\mathrmsl}{OT1}{cmr}{m}{sl}

\newcommand{\ang}{\theta}
\newcommand{\del}{\partial}
\newcommand{\delb}{\bar{\partial}}

\newcommand{\brs}[1]{\left| #1 \right|}
\newcommand{\G}{{\mathbb G}}

\newcommand{\gD}{\Delta}
\newcommand{\gs}{\sigma}

\newcommand{\gk}{\kappa}
\newcommand{\gl}{\lambda}

\newcommand{\tI}{\tilde{I}}

\newcommand{\ga}{\alpha}
\newcommand{\gb}{\beta}
\newcommand{\gL}{\Wedge}
\newcommand{\N}{\nabla}

\newcommand{\WW}{\mathcal W}

\renewcommand{\bar}[1]{\overline{#1}}

\renewcommand{\i}{\sqrt{-1}}
\newcommand{\Pol}{{\mathrm P}}
\newcommand{\X}{{\mathbf X}} 
\newcommand{\Y}{{\mathbf Y}} 

\newcommand{\cH}{{\mathcal H}}
\newcommand{\I}{{\bf I}} 
\newcommand{\M}{{\bf M}} 
\newcommand{\E}{{\mathcal E}} 
\newcommand{\PSH}{\mathrm{PSH}} 

\renewcommand{\part}{\del}

\newcommand{\hook}{\righthalfcup}

\newcommand{\be}{\bar{e}}

\newcommand{\IP}[1]{\left<#1\right>}

\newcommand{\som}{\omega_0}
\newcommand{\pg}{g}
\newcommand{\kom}{\bar{\omega}}
\newcommand{\gw}{\omega}%
\newcommand{\pom}{\omega}
\newcommand{\kg}{{\bar g}} 
\newcommand{\Fo}{\omega_0} 
\newcommand{\kJ}{{\bar J}} 
\newcommand{\kI}{{\bar I}} 

\newcommand{\tih}{{h}}

\DeclareMathOperator{\Rc}{Rc}

\DeclareMathOperator{\tr}{tr}

\DeclareMathOperator{\Vol}{Vol}

\renewcommand{\geq}{\geqslant}

\newcommand{\R}{{\mathbb R}}
\newcommand{\C}{{\mathbb C}}
\newcommand{\Z}{{\mathbb Z}}

\newcommand{\T}{{\mathbb T}}

\newcommand{\tor}{{\mathfrak t}}

\newcommand{\kal}{\bar{\alpha}}
\newcommand{\cL}{{\mathcal L}}
\newcommand{\cS}{{\mathcal S}}

\newcommand{\Aut}{\mathrmsl{Aut}}

\newcommand{\trace}{\mathop{\mathrm{tr}}\nolimits}
\newcommand{\ip}[1]{\langle #1 \rangle}
\renewcommand{\d}{{\mathrmsl d}}
\newcommand{\Hess}{\mathrm{Hess}}

\newcommand{\Fi}{F}
\newcommand{\La}{{\mathrm L}}
\def\th/#1#2{{#1}^{#2}}

\numberwithin{prop}{section}

\theoremstyle{definition}

\newtheorem{rem}[prop]{Remark}

\newtheorem{convention}{Convention}

\def\fnote#1{}

\def\be{\begin{equation}}
\def\ee{\end{equation}}
\def\bea{\begin{eqnarray*}}
\def\eea{\end{eqnarray*}}

\newenvironment{bulletlist}{\begin{list}{\labelitemi}%
{\setlength{\leftmargin}{\parindent}\def
\makelabel ##1{\hss \llap {\upshape ##1}}}}{\end{list}}
\parindent 1cm

\date{\today}

\begin{document}

\title[Generalized K\"ahler-Ricci flow on toric Fano varieties]{Generalized K\"ahler-Ricci flow on toric Fano varieties}

\begin{abstract} We study the generalized K\"ahler-Ricci flow with initial data of symplectic type, and show that this condition is preserved.  In the case of a Fano background with toric symmetry, we establish global existence of the normalized flow.  We derive an extension of Perelman's entropy functional to this setting, which yields convergence of nonsingular solutions at infinity.  Furthermore, we derive an extension of Mabuchi's $K$-energy to this setting, which yields weak convergence of the flow.
\end{abstract}

\author{Vestislav Apostolov}
\address{Vestislav Apostolov\\ D\'epartment de mathe\'ematiques\\
         Universit\'e du Qu\'ebec \`a Montr\'eal\\
         Case postale 8888, succursale centre-ville
         Mongtr\'eal (Qu\'ebec) H3C 3P8}
\email{\href{mailto:apostolov.vestislav@uqam.ca}{apostolov.vestislav@uqam.ca}}

\author{Jeffrey Streets}
\address{Jeffrey Streets\\Rowland Hall\\
         University of California\\
         Irvine, CA 92617}
\email{\href{mailto:jstreets@uci.edu}{jstreets@uci.edu}}

\author{Yury Ustinovskiy}
\address{Yury Ustinovskiy\\Courant Institute of Mathematical Sciences\\
New York University\\
251 Mercer Street\\
New York, NY, 10012-1185} 
\email{\href{mailto:yura.ust@nyu.edu}{yura.ust@nyu.edu}}

\thanks{JS was supported by the NSF via DMS-1454854.  VA was supported in part by an NSERC Discovery Grant. The authors are grateful to Tamas Darvas, Xiaohua Zhu, and an anonymous referee for their comments on the manuscript.}
\maketitle

\section{Introduction}

Generalized K\"ahler (GK) structures first appeared through investigations into supersymmetric sigma models \cite{Gates}, and were rediscovered in
a purely mathematical context in the work of Gualtieri~\cite{Gualtieri-PhD} and  Hitchin~\cite{HitchinGCY}. They have recently
attracted interest in both the physics and mathematical communities as natural generalizations of K\"ahler structures.  We will focus here entirely on the so called {\it biHermitian} description of generalized K\"ahler geometry (cf. \cite{AGG,Gates}). Thus,  a
generalized K\"ahler manifold is a smooth manifold $M$ with a triple $(g, I, J)$ consisting
of two integrable almost-complex structures,  $I$ and $J$,  together with a
Riemannian metric $g$ which is Hermitian with respect to both, such that the K\"ahler forms $\gw_I$ and
$\gw_J$ satisfy
\begin{align}\label{GK}
 \d^c_I \gw_I = H = - \d^c_J \gw_J, \qquad \d H = 0,
\end{align}
where the first equation defines $H$, and $\d^c_I = \i (\delb_I - \del_I), \, \d^c_J=\i(\delb_J-\del_J)$.

The generalized K\"ahler-Ricci flow (GKRF) was introduced by the second named author and Tian in \cite{STGK} as a tool for constructing canonical metrics and understanding existence and moduli problems in generalized K\"ahler geometry, extending the deeply developed theory in K\"ahler geometry.  Prior global existence and convergence results for GKRF have appeared in for instance \cite{AS,SNDG, SPCFSTB}.  In this paper we consider {\it normalized generalized K\"ahler Ricci flow} (NGKRF), expressed as a flow of $\d\d_J^c$-closed positive definite $(1,1)$-forms $\omega_{J}$ on the complex manifold  $(M, J)$, together with a flow of $I$, as (cf. \cite{STGK})
\begin{gather}
\begin{split}\label{e:NGKRF}
\frac{\partial}{\partial t} \omega_J =&\ - 2\Big(\rho^{B}_J\Big)^{1,1}  + 2\lambda \omega_J, \qquad \frac{\del}{\del t} I = \cL_{\theta_I^{\sharp} - \theta_J^{\sharp}} I,
\end{split}
\end{gather}
where $\lambda$ is a fixed real constant  and $\Big(\rho_J^{B}\Big)^{1,1}$ denotes the $(1,1)$-part with respect to $J$  of the Bismut-Ricci form $\rho_J^B \in 2\pi c_1(M, J)$ of $(\gw_J, J)$.  By~\cite{STGK}, as far as a solution to \eqref{e:NGKRF} exists,
the triple $(g_t, I_t, J)$ defines a GK structure  in the sense of \eqref{GK}.  Notice that if the initial GK structure  $(g, I, J)$ was K\"ahler  (i.e.  $\d^c_J\omega_J=-H=0$ and $I=J$ in \eqref{GK}),  the Bismut-Ricci form $\rho^B$ is just the usual K\"ahler-Ricci form of the K\"ahler manifold $(g, J)$, and \eqref{e:NGKRF} reduces to the normalized K\"ahler Ricci flow.   In this setting Cao showed \cite{Cao} smooth global existence.  The convergence at infinity is a very subtle issue, and many results have appeared in recent years \cite{CSW, DS, Phongetal, TZZZ, TZ3f, TZ-07, TZ-13}.

In this paper, we will study NGKRF in the case when $(M, J)$ is a smooth toric Fano variety of complex dimension $m$, assuming that the initial GK structure $(g, I, J)$ is compatible with a symplectic form $F \in 2 \pi c_1(M, J)$,  and  is invariant under the action of a maximal compact torus $\T$ in the automorphism group $\Aut(M, J)$ of $(M,J)$.  More precisely, suppose that  
$F$ is a symplectic $2$-form on $M$ which tames $J$, i.e. whose $(1,1)$-part  $\omega_J := \big(F \big)^{1,1}_J$  is positive definite. We then define a Riemannian metric $g$, a $2$-form $b$ and  an almost complex structure $I$ by
\begin{align}\label{GK-symplectic}
g:= -(FJ)^{\rm sym},\qquad b:=-(FJ)^{\rm skew}, \qquad I := - F^{-1}J^*F.
\end{align}
One can show (cf. \S \ref{s:symplectic-type}) that when $I$ is also integrable,  $(g, I, J)$ is a GK structure with $H= d b$.  Furthermore, in this case we say that $(g, I, J)$ is an $F$-{\it compatible} GK structure, or a GK structure of {\it symplectic type},  referring to the fact~\cite{Gualtieri-CMP} that one of the corresponding generalized complex structures on $TM \oplus T^*M$ is determined completely by the symplectic $2$-form $F$.

To better understand these structures, first recall that a key feature of GK geometry, observed by Hitchin~\cite{HitchinPoisson}  (cf. \cite{AGG, Pontecorvo} for the $4$-dimensional
case) is that there are naturally associated holomorphic Poisson structures.  More precisely,
the tensors
\begin{align}\label{GK-Poisson}
\gs = [I,J] g^{-1}, \qquad \sigma_I = \sigma - \sqrt{-1} I \sigma, \qquad \sigma_J = \sigma - \sqrt{-1} J \sigma
\end{align}
define respectively a real Poisson structure and holomorphic Poisson structures with respect to $I$ and $J$.  Conversely, given a compact K\"ahler manifold $(M, J, \kg, \kom)$ and a non-trivial holomorphic Poisson structure $\mu \in H^0(M, \gL^2(T^{1,0}M))$, the are deformation results~\cite{HitchinJSG,Goto-AM,Gualtieri-Poisson, Gualtieri-Hamiltonian} producing non-K\"ahler GK structures  $(g_t, I_t, J)$ with holomorphic Poisson structure $\sigma_{J_t}=t\mu$ for $|t|< \varepsilon$. This takes a particularly nice and explicit form on a K\"ahler toric variety $(M, J, \T)$.  Recent work of Boulanger~\cite{boulanger} constructs invariant toric GK structures by deforming the complex structure $J$ using $A\in \gL^2\tor$, referred to here as {\it deformations of type $A$}.  More recently, Y. Wang~\cite{W1,W2} constructed toric GK structures by deforming $\bar{\gw}$ to a symplectic form $F$ using $B \in \gL^2\tor$, referred to here as {\it deformations of type $B$}.  It follows from the results in \cite{W2} (cf. Proposition \ref{p:toric-GK} below) that, up to pull-back by $\T$-equivariant diffeomorphism, any $\T$-invariant GK structure $(g, I, J)$ of symplectic type on $(M, J)$ with corresponding symplectic $2$-form $F$, is obtained  from a $\T$-invariant K\"ahler structure $(\kg, \kJ)$ by deformations of type $A$ and $B$.  We furthermore observe in Proposition \ref{p:toric-Poisson} a geometric interpretation of $A$ and $B$-type deformations in terms of the holomorphic Poisson structures.

Building upon this, we show in \S \ref{s:symplectic-type} that NGKRF interacts with the associated Poisson structures in a predictable way, and thus is a natural tool for investigating questions on the global moduli of generalized K\"ahler structures on, as well as the underlying Poisson geometry of, Fano varieties.  We formulate certain precise conjectures coming from the formal picture of NGKRF in \S \ref{s:symplectic-type}.  As a fundamental first step in this direction, we establish the sharp global existence of the flow in the case of toric symmetry.

\begin{thm} \label{t:mainthm} Let $(M, J, \T)$ be a smooth toric Fano  variety and $F\in 2\pi c_1(M, J)$ a $\T$-invariant symplectic $2$-form which defines a $\T$-invariant generalized K\"ahler structure $(g, I, J)$ of symplectic type on $M$. Then, the solution of normalized generalized K\"ahler Ricci flow \eqref{e:NGKRF} with this initial data exists on $[0,\infty)$.
\end{thm}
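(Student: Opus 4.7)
The plan is to use the toric symmetry and the preservation of the symplectic-type condition to reduce NGKRF to a scalar parabolic Monge-Amp\`ere equation coupled to a finite-dimensional ODE, and then to establish uniform a priori estimates sufficient to extend the flow to all times.

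First, the flow is canonical and equivariant, so $\T$-invariance of the initial data is preserved, and by the results of \S\ref{s:symplectic-type} so is the symplectic-type condition. Thus for each $t$ there exists a $\T$-invariant symplectic form $F_t$ with $\omega_{J,t}=(F_t)^{1,1}_J$, and by Proposition \ref{p:toric-GK} the data $(F_t,I_t)$ are obtained from a fixed $\T$-invariant K\"ahler reference $(\bar g,\bar J)$ by $A$- and $B$-type deformations parametrized by $A_t,B_t\in \Lambda^2\tor$ together with a $\T$-invariant scalar potential $\varphi_t$. Because $F\in 2\pi c_1(M,J)$, the Fano normalization forces $\lambda=1$ and the cohomology class $[\omega_{J,t}]$ remains in $2\pi c_1(M,J)$, so the cohomological data controlling $A_t,B_t$ stay fixed.

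Next, on the open dense $(\C^*)^m$-orbit, writing $\omega_{J,t}=\bar\omega+\i\del\bar\del\varphi_t$ and expressing $(\rho^B_J)^{1,1}$ via Wang's description of the $A,B$-deformations, the tensorial system \eqref{e:NGKRF} becomes a scalar parabolic equation of the form
\[
\dot\varphi_t \;=\; \log\frac{\det\mathbf{H}[\varphi_t,A_t,B_t]}{\det(\bar g)} \;-\; \varphi_t+f_0,
\]
where $f_0$ is the Ricci potential of $\bar g$ and $\mathbf{H}$ is a positive Hermitian matrix depending polynomially on $A_t,B_t$ and on $\Hess\varphi_t$, coupled to a finite-dimensional ODE for $(A_t,B_t)$ driven by the vector field $\theta_{I_t}^\sharp-\theta_J^\sharp$ from \eqref{e:NGKRF}.

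The main step is to derive uniform $C^\infty$ estimates on $[0,T)$ for every finite $T$. Following the strategy of Cao~\cite{Cao}, one obtains a uniform bound on $\dot\varphi_t$ by differentiating the equation and applying a Harnack-type maximum principle, and then a $C^0$ bound on $\varphi_t$ by the maximum principle combined with the $L^\infty$ control of $\dot\varphi_t$ and the a priori boundedness of $A_t,B_t$ (their ODE lives on a finite-dimensional space with right-hand side bounded by the norm of the geometric data). A Yau-Aubin-type second order estimate for the modified Monge-Amp\`ere equation, adapted to accommodate the lower-order $A,B$ contributions, then yields a $C^2$ bound; Krylov-Evans and Schauder provide all higher-order estimates. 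Toric invariance promotes these estimates from a compact subset of the open orbit to smooth estimates on all of $M$, ruling out finite-time singularities.

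The principal obstacle is the $C^0$ estimate for $\varphi_t$: in the pure K\"ahler-Ricci flow this is already the heart of Cao's argument, and the generalized K\"ahler setting introduces extra lower-order terms in the effective Monge-Amp\`ere equation coming from the non-trivial $A,B$ deformations which must be shown not to destabilize the maximum principle. The finite-dimensional nature of $A,B$, the cohomological constraint $[\omega_{J,t}]=2\pi c_1(M,J)$, and the $(\C^*)^m$-invariance are the structural facts that make this estimate tractable.
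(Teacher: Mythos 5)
Your overall skeleton (preservation of symmetry and of the symplectic-type condition, scalar reduction on the open orbit, a priori estimates via the maximum principle) matches the paper's strategy, but two of your key steps have genuine gaps. First, the scalar reduction is asserted rather than established, and the structure you propose for it is not what actually happens. You couple the Monge--Amp\`ere equation to ``a finite-dimensional ODE for $(A_t,B_t)$ driven by $\theta_{I_t}^\sharp-\theta_J^\sharp$'' and then propose to bound that ODE by the norm of the geometric data --- but this is circular, since controlling the geometric data is exactly what the estimates are supposed to deliver. The content of Proposition \ref{p:general-reduction} and Corollary \ref{c:Adecoupling} is that, after pulling back by the canonical biholomorphisms of Lemma \ref{l:complex-identification}, the deformation parameters evolve \emph{explicitly} as $A_t=e^{-2t}A_0$ and $B_t=e^{-2t}B_0$, the $A$-part decouples entirely from the scalar equation, and $u_t$ (equivalently its Legendre transform $\phi_t$) satisfies the closed equation \eqref{toric-GKRF}, resp.\ \eqref{toric-GKRF-kahler}. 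Establishing this requires identifying the Bismut--Ricci potential of a toric GK structure (Lemma \ref{l:global-potential}) and a delicate computation tracking the evolving complex structure $I_t$; it is not a formal consequence of invariance, and your claim that the cohomological data controlling $A_t,B_t$ ``stay fixed'' is also not what happens (they decay). Without the explicit decay of $B_t$ and the decoupling of $A_t$, your effective equation is not a well-posed scalar flow and the subsequent estimates cannot be run.

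Second, your second-order estimate is modeled on the K\"ahler case and does not address the torsion. The metric $\omega_{J,t}$ is only pluriclosed, and the evolution inequality for $\log\tr_{\kom_0}\pom_t$ (see \eqref{f:LTE10}) produces a bad term $+\brs{T^c}^2_{g_t}$ from the Chern torsion with no analogue in Yau/Aubin/Cao. The paper absorbs it by adding $A\bigl(\brs{b_t}^2_{g_t}+\varphi\bigr)$ to the test function and using the evolution of the torsion potential (Proposition \ref{p:b-evolution}, which yields $(\tfrac{\del}{\del t}-\Delta^C)\brs{b}^2\leq -\brs{T^c}^2$); the resulting uniform bound $\brs{b_t}^2\leq C$ of Proposition \ref{p:basic-estimate} is also what closes the lower bound for $\dot\varphi$ in Lemma \ref{jeff-trick}. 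This torsion-potential mechanism is the genuinely new ingredient relative to Cao, and your proposal does not supply a substitute. A smaller point: the $C^0$ bound for $\varphi$, which you single out as the principal obstacle, is in fact the easy step here --- the reaction term $+2\varphi$ gives at worst exponential growth by the maximum principle, which suffices on finite time intervals --- and the higher regularity comes from the Evans--Krylov-type estimate for pluriclosed flow of \cite{JS}, not from classical Krylov--Evans applied to a concave scalar equation.
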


The key geometric observation behind Theorem \ref{t:mainthm} is that in the case of a Fano background with toric symmetry, the normalized generalized K\"ahler Ricci flow \eqref{e:NGKRF} reduces to a flow of  $\T$-invariant \emph{K\"ahler} metrics $\kom_t \in 2\pi c_1(M, J)$.  Using the Abreu--Guillemin reduction~\cite{Abreu2,guillemin}, these are then determined by a family of strictly convex smooth functions $\phi_t$ on $\R^m$,  satisfying
\begin{align}\label{toric-GKRF-Kahler}
\frac{\partial}{\partial t} \phi = \log  {\det}  \Big( \big({\rm Hess}\ \phi \big)^{-1} + \sqrt{-1}e^{-2t}B\Big)^{-1} + 2\phi,
\end{align}
where $B$ is a skew-symmetric matrix corresponding to the $B$-type deformation described above.  Interestingly, this reduced scalar equation is independent of $A$-type deformations.  Also, when $B=0$, it coincides with the PDE describing the reduction of the K\"ahler Ricci flow on a smooth toric Fano variety, studied in~\cite{Zhu}.  We emphasize here that the scalar reduction in the K\"ahler-Ricci flow case is an adaptation to the toric setting of the general scalar reduction, itself a consequence of the transgression formula for the Ricci curvature.  In our setting the derivation of this scalar reduction is more subtle.  In GK geometry, the local Ricci potential relies on delicate constructions in generalized geometry (cf. \cite{GRFBook} Definition 8.15), and is not always easy to make explicit.  In \S \ref{s:toric-GK}, after a careful buildup of the differential geometry of toric generalized K\"ahler structure, we derive the Bismut-Ricci potential.  However, the relevant formula involves the evolving complex structure $I$, and therefore a scalar reduction of the flow is not immediate.  Again subtle structure of toric GK geometry enters to give the scalar reduction to (\ref{toric-GKRF-Kahler}) in Proposition \ref{p:phireduction}.  Given this scalar reduction, through a series of estimates based on the maximum principle we derive a priori $L^{\infty}$ estimates for the associated metric in terms of a $C^0$ bound for $\phi$.  Using the generalization of Evans-Krylov $C^{2,\alpha}$/Calabi-Yau $C^3$ estimates for pluriclosed flow (\cite{JS,SBIPCF}), we obtain $C^{\infty}$ estimates for $\phi$ in terms of a $C^0$ estimate for $\phi$.  Since $\phi$ grows at worst exponentially by the maximum principle, we thus conclude the long time existence.

One expects the convergence at infinity to be a delicate issue, and we prove two partial results in this direction.  In the toric KRF setting the convergence was established by Zhu \cite{Zhu} using delicate estimates derived using Perelman's entropy, Mabuchi's $K$-energy and estimates from convex geometry.  Outside of the toric setting, convergence assuming the existence of a K\"ahler-Ricci soliton was shown by Tian-Zhu \cite{TZ-07, TZ-13}.  First we prove convergence of nonsingular solutions without the toric symmetry hypothesis, where so far there is not a scalar reduction of the flow.  The key point is a Perelman-type entropy monotonicity, which yields that the only smooth self-similar limits of NGKRF are in fact K\"ahler-Ricci solitons, and also shows a uniform $\gk$-noncollapsing result for NGKRF on Fano manifolds.

\begin{thm} \label{t:genconv} Let $(M, g, I, J)$ be a generalized K\"ahler structure of symplectic type with respect to $F$.  Let $(g_t, I_t, J)$ denote the solution to NGKRF with this initial data.  Then
\begin{enumerate}
\item The metric $g_t$ is uniformly $\gk$-noncollapsed for all times the flow exists.
\item Suppose $(M, J)$ is Fano, $F \in 2 \pi c_1(M, J)$.  Suppose the solution exists on $[0,\infty)$ with uniformly bounded curvature.  Then any sequence of times $\{t_j\} \to \infty$ admits a subsequence such that $(g_{t_j}, I_{t_j}, J)$ converges in the $C^{\infty}$-Cheeger-Gromov topology to $(g_{KRS}, J_{\infty}, J_{\infty})$, where $g_{KRS}$ is a K\"ahler-Ricci soliton.
\end{enumerate}
\end{thm}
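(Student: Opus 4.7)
My plan is to mimic Perelman's proof of convergence for K\"ahler--Ricci flow on Fano manifolds, replacing the Ricci form by the Bismut--Ricci form $\rho^B$. The two main ingredients are a Perelman-type entropy adapted to NGKRF that is monotone along the flow, and Hamilton--Cheeger--Gromov compactness, which applies once we have uniform $\kappa$-noncollapsing together with the hypothesis of uniformly bounded curvature.

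First I would introduce a modified $\mathcal{W}$-entropy of the form
\begin{equation*}
\mathcal{W}(g, H, f, \tau) = \int_M \Big[\tau\big(R^g - \tfrac{1}{12}|H|^2 + |\nabla f|^2\big) + f - 2m\Big] (4\pi\tau)^{-m} e^{-f}\, dV_g,
\end{equation*}
together with the associated $\mu$- and $\nu$-functionals, and show that when $g_t$ evolves by NGKRF while $u = (4\pi\tau)^{-m} e^{-f}$ solves the appropriate conjugate heat equation with $\tau' = -1 + 2\lambda\tau$, one obtains a monotonicity identity of the schematic form
\begin{equation*}
\tfrac{d}{dt} \mathcal{W} \;\geq\; \int_M \Big|\rho^B - \nabla^2 f - \tfrac{1}{2\tau}g\Big|^2 u\, dV_g,
\end{equation*}
whose vanishing characterizes shrinking generalized K\"ahler--Ricci solitons. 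The key subtlety is that both the torsion $H$ and the complex structure $I$ evolve nontrivially; one must use the local Bismut--Ricci potential together with the fact that $\del_t I$ is a Lie derivative along $\theta_I^\sharp - \theta_J^\sharp$ to push Perelman's Bianchi-type cancellations through in the Bismut setting. Granted this monotonicity, Perelman's no-local-collapsing argument applies essentially verbatim and delivers part (1).

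For part (2), the Fano normalization $F \in 2\pi c_1(M, J)$ fixes the cohomology class of $\omega_J$ along the flow, giving uniform upper bounds on the volume and on $\mathcal{W}$.  Combined with uniformly bounded curvature, $\kappa$-noncollapsing, and a diameter bound (which I expect to derive from a Harnack estimate for the Bismut-Ricci potential in the manner of Perelman), Hamilton's compactness theorem extracts a $C^\infty$-Cheeger--Gromov subsequential limit $(M_\infty, g_\infty, I_\infty, J_\infty)$. Monotonicity forces the entropy to be stationary on the limit, so the limit is a shrinking GK soliton. The final step is to show that any such soliton on a compact Fano background must have $I_\infty = J_\infty$; I would deduce this by combining the soliton equation $\rho^B - \nabla^2 f = \tfrac{1}{2}g$ with the GK compatibility $\d^c_{I_\infty}\omega_{I_\infty} = H_\infty = -\d^c_{J_\infty}\omega_{J_\infty}$ and the self-similar identity $\cL_{\theta_{I_\infty}^\sharp - \theta_{J_\infty}^\sharp} I_\infty = 0$, which together with a Lichnerowicz-type vanishing argument should force the associated holomorphic Poisson tensor $\sigma_{J_\infty}$ to vanish, equivalently $I_\infty = J_\infty$.

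The principal obstacle is the entropy monotonicity itself: identifying the correct coupling of $f$, $\tau$, and $H$ so that the Bianchi-type identities close up in the Bismut setting is noticeably more delicate than in pure Ricci flow, and will require careful use of the Bismut--Ricci potential constructed in the differential-geometric buildup of \S \ref{s:toric-GK}.
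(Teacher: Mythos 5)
Your overall strategy (a Perelman-type entropy plus Hamilton--Cheeger--Gromov compactness) is the right one, but the entropy you wrote down is missing the one ingredient that makes the argument close. The paper's functional is
\begin{align*}
\WW(g,H,f,\tau) \;=\; \int_M \Big[\tau\big(|\nabla f|^2 + R - \tfrac{1}{12}|H|^2\big) \;-\; |b|^2 \;+\; f - 2m\Big](4\pi\tau)^{-m}e^{-f}\,dV,
\end{align*}
with an extra $-|b|^2$ term, where $b$ is the torsion potential available precisely because the structure is of symplectic type. For the bare generalized-Ricci-flow entropy you propose, the standard computation leaves a term $-\tfrac{1}{6}|H|^2$ of the wrong sign (coming from differentiating $-\tfrac{\tau}{12}|H|^2$ against $\tau'=-1$), so monotonicity is not known, and even granting your schematic inequality, stationarity would only characterize shrinking solitons \emph{with torsion}. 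The fix is the ``torsion-bounding subsolution'' mechanism of \cite{GRFBook}: Proposition \ref{p:b-evolution} gives $\big(\tfrac{\del}{\del t}-\gD^C\big)|\gb_J|^2 \le -|H|^2$, and inserting $-|b|^2$ into the entropy converts the bad $-\tfrac{1}{6}|H|^2$ into $+\tfrac{5}{6}|H|^2$ in the monotonicity formula (Proposition \ref{modifiedmono-1}). That positive $|H|^2$ term is exactly what forces $H_\infty=0$ on the limit; once $H_\infty=0$, both $(g_\infty,I_\infty)$ and $(g_\infty,J_\infty)$ are K\"ahler, $F_\infty$ is parallel and hence of type $(1,1)$ by the Fano condition, and $I_\infty=J_\infty$ follows directly --- no Lichnerowicz-type vanishing for $\sigma_{J_\infty}$ is needed, and without the $-|b|^2$ term you have no way to rule out nonzero limiting torsion.

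Two smaller discrepancies: the diameter bound does not require a Harnack estimate for the Bismut--Ricci potential --- since the data is symplectic type one has $dV_g \le F^m/m!$, so $\Vol(M,g_t)$ is bounded above by a cohomological constant, which together with $\gk$-noncollapsing bounds the diameter once curvature is assumed bounded. Also, to extract a generalized K\"ahler limit (rather than just a Riemannian one) you still need $C^\infty$ bounds on $I_t$ and $J_t$ (obtained from the fact that the two Bismut connections preserve them and their coefficients already converge) and a uniform lower bound on $|\det(I_t+J_t)|$ (obtained from the uniform upper bound on $|b_t|^2$ of Proposition \ref{p:basic-estimate}), so that the limit remains of symplectic type; your proposal does not address either point.
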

\noindent In the convergence statement above, the complex structure $J_{\infty}$ may not be in general biholomorphic to $J$, and as is the case in K\"ahler-Ricci flow one expects this subtle issue to be related to $K$-stability of $(M, J)$. In the case when the initial structure $(g, I, J)$  has a \emph{toric symmetry}, the scalar reduction \eqref{toric-GKRF-Kahler} of the GKRF and the results in~\cite{Zhu} motivate us to conjecture that $J_{\infty} \cong J$ (see Conjecture~\ref{main-conjecture} below).

Returning to the setting of toric symmetry, we are able to prove a certain kind of weak, but unconditional, convergence.  First, adapting ideas from general GIT theory, we are able to define an extension of the Mabuchi $K$-energy to toric generalized K\"ahler structures, which we show is monotone along NGKRF.  Using this monotonicity and exploiting the variational approach of  \cite{BBEGZ}, we obtain a weak convergence.

\begin{thm} \label{t:convthm} Let $(M, J, \T)$ be a smooth toric Fano  variety which admits a K\"ahler-Einstein metric, and $F\in 2\pi c_1(M, J)$ a $\T$-invariant symplectic $2$-form which defines a $\T$-invariant generalized K\"ahler structure $(g, I, J)$ of symplectic type on $M$. Let $(g_t, I_t,J), t \in [0, \infty)$ be  the global solution of the normalized generalized K\"ahler Ricci flow \eqref{e:NGKRF} with this initial data guaranteed by Theorem~\ref{t:mainthm}, and $\kom_t$ the corresponding family of toric K\"ahler metrics defined by \eqref{toric-GKRF-Kahler}. Then,  there exists a sequence of times $j\to \infty$  and automorphisms $\tau_j \in \T_{\C}$,  such that $\tau_j^*(\kom_j)$ converges, with respect to the distance $d_1$,  to a positive $(1,1)$-current $\kom_{\infty}$ of maximal Monge-Amp\`ere mass and finite energy on $(M, J)$. \end{thm}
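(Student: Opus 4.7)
The plan is to exploit the scalar reduction of Theorem~\ref{t:mainthm} together with an extension of Mabuchi's $K$-energy to toric generalized K\"ahler structures, and then apply the variational framework of Berman--Boucksom--Eyssidieux--Guedj--Zeriahi (BBEGZ) in the presence of the coercivity that follows from the existence of a K\"ahler--Einstein metric.

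\emph{Step 1: Reduction and monotone functional.} By Theorem~\ref{t:mainthm} and the scalar reduction \eqref{toric-GKRF-Kahler}, the NGKRF is equivalent to a flow of $\T$-invariant K\"ahler metrics $\kom_t \in 2\pi c_1(M,J)$, coupled to the fixed datum $B$ via a correction term that decays like $e^{-2t}$.  On the space of $\T$-invariant K\"ahler potentials, I would introduce an extended Mabuchi functional $\M_B$, obtained by adding to the classical toric Mabuchi $K$-energy $\M$ a correction depending on $B$, designed so that $\frac{d}{dt}\M_B(\kom_t) \leq 0$ along \eqref{toric-GKRF-Kahler}.  Monotonicity would follow by a direct differentiation using the explicit form of the Bismut--Ricci potential derived in Section~\ref{s:toric-GK}; in particular, one should show that $\M_B - \M$ stays uniformly bounded along the flow, which reduces the asymptotic analysis to the classical $\M$.

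\emph{Step 2: Coercivity and uniform distance bounds.}  The assumption that $(M,J,\T)$ admits a K\"ahler--Einstein metric implies, by Wang--Zhu and subsequent coercivity results of Berman--Berndtsson and Darvas, that the classical Mabuchi $K$-energy is $\T_\C$-coercive on the space $\E^1(M, 2\pi c_1)$ of $\T$-invariant finite energy potentials: there exist constants $A, C > 0$ with
\begin{align*}
\M(\omega) \geq A \, \inf_{\tau \in \T_\C} d_1\bigl(\tau^*\omega, \kom_0\bigr) - C.
\end{align*}
Combining this with the monotonicity and the boundedness of $\M_B - \M$ from Step~1, the quantity $\inf_\tau d_1(\tau^* \kom_t, \kom_0)$ remains uniformly bounded along the flow.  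Choosing optimizing automorphisms $\tau_j \in \T_\C$ at any sequence of times $t_j \to \infty$, one obtains $d_1(\tau_j^* \kom_j, \kom_0) \leq C$.

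\emph{Step 3: Variational compactness.}  By Darvas' compactness theorem for $(\E^1, d_1)$, a subsequence of $\tau_j^* \kom_j$ converges in $d_1$ to some $\kom_\infty \in \E^1(M, 2\pi c_1)$.  To upgrade this to maximal Monge--Amp\`ere mass, one uses the uniform bound on the entropy piece of $\M(\tau_j^* \kom_j)$, the lower semicontinuity of entropy, and the BBEGZ compactness of probability measures of bounded entropy, concluding that $\kom_\infty$ is a positive $(1,1)$-current of maximal Monge--Amp\`ere mass and finite energy on $(M,J)$.

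The main obstacle is Step~1: constructing the extended Mabuchi functional $\M_B$ in the toric GK setting and establishing its monotonicity along \eqref{toric-GKRF-Kahler}.  This requires a careful identification of the GIT-theoretic origin of the Bismut--Ricci potential together with quantitative control of the $B$-correction term, both of which demand the detailed structure of toric GK geometry developed earlier in the paper.  Once $\M_B$ is in hand with the stated properties, the Wang--Zhu coercivity and Darvas' finite-energy compactness yield the convergence essentially formally.
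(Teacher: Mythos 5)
Your proposal follows essentially the same route as the paper: an extended Mabuchi functional on toric GK structures shown to be monotone along the flow (Corollary~\ref{Mabuchi-monotone}), the Darvas--Rubinstein $\T_{\C}$-coercivity of the classical Mabuchi energy in the K\"ahler--Einstein case, and the BBEGZ/BDL finite-energy compactness theorem. The only simplification worth noting relative to your Step~1 is that one does not need $\M_B-\M$ to be uniformly bounded: since $F^{[m]}=\kom_{\varphi}^{[m]}\geq \omega_J^{[m]}$, the extended functional dominates the classical one from above, and this one-sided inequality \eqref{M-tames} already suffices to transfer the monotone upper bound on $\M(F_t)$ to $\M(\kom_{\varphi_t})$ and hence to the coercivity estimate.
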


\noindent The precise definitions for the convergence are given in Section~\ref{s:weakconvergence} below, but in the toric case we consider it can be equivalently characterized~\cite{Darvas-et-al, guedj-toric}  in terms of the $L^1$ convergence over the Delzant polytope of $(M, F, \T)$ of the Legendre transforms $u_j$ of the convex functions $\phi_j$ satisfying \eqref{toric-GKRF-Kahler}, up to the addition to $u_j$ of affine-linear functions. 

Here is an outline of the rest of the paper.  In \S \ref{s:symplectic-type} we recall fundamental properties of GK structures and GKRF, focusing on the symplectic-type case.  In \S \ref{s:toric-GK} we discuss toric GK structures, unify the discussions of \cite{boulanger} and \cite{W1}, \cite{W2}, and explicitly describe the relevant associated Poisson tensors.  This leads to a derivation of the underlying K\"ahler metric and the associated metric potential and Bismut-Ricci potential.  With these geometric results in place, we give the proof of Theorem \ref{t:mainthm} in \S \ref{s:LTE}, relying principally on the scalar reduction of the flow described above.  In the final \S \ref{s:MF}, we obtain the monotonicity of the Perelman-type and Mabuchi-type functionals, and give the proofs of Theorems~\ref{t:genconv}  and \ref{t:convthm}.

\section{The generalized K\"ahler-Ricci flow in the symplectic-type case}\label{s:symplectic-type}
\subsection{Curvature identities for Hermitian metrics}

In this subsection we recall the definitions of the Bismut-Ricci and the Chern-Ricci forms on a Hermitian manifold $(M, g, J)$, extending the notion of the K\"ahler-Ricci form of a K\"ahler manifold. 

\begin{defn} \label{Bismutdef} Let $(M, g, J)$ be a  Hermitian  manifold with  fundamental $2$-form $\omega_J = gJ$.  The \emph{Bismut connection} is the Hermitian connection defined
by
\begin{align} \label{Bismutformula}
g(\N^{B}_X Y, Z) =&\ g(\N_X Y, Z) - \tfrac{1}{2} \d^c \gw_J(X,Y,Z),
\end{align}
where $\nabla$ stands for the Riemannian connection of $g$.  The \emph{Chern connection} is the Hermitian connection defined
by
\begin{align} \label{Chernformula}
g(\N^{C}_X Y, Z) =&\  g(\N_X Y, Z) + \tfrac{1}{2} \d^c \gw_J (X,JY,JZ).
\end{align}
\end{defn}

\begin{defn} \label{Riccidefs} Let $(M, g, J)$ be a complex
manifold with a Hermitian  metric $g$.  The \emph{Bismut-Ricci curvature} and
\emph{Chern-Ricci curvature} are defined by
\begin{align*}
 \rho^B(X,Y) =&\ \tfrac{1}{2} \sum_{i=1}^{2m}R^B(X,Y,e_i, J e_i),
\qquad  \rho^C(X,Y) =\ \tfrac{1}{2} \sum_{i=1}^{2m}R^C(X,Y,e_i, J e_i),
\end{align*}
where $R^B$ and $R^C$ are the curvature tensors associated to the
Bismut \cite{Bismut} and Chern connections respectively, and $\{e_i\}$ is an orthonormal 
basis for $(TM, g)$.  Furthermore we define the \emph{Bismut scalar curvature} as
\begin{align*}
s^B = \sum_{i=1}^{2m} \rho^B(e_i, J e_i).
\end{align*}
\end{defn}
Using  the  formulas (\ref{Bismutformula}) and (\ref{Chernformula}), 
one can check (see  e.g.~\cite[Rem.~5]{gauduchon-connection})  that the induced unitary connections $\nabla^B$ and $\nabla^C$ on the anti-canonical line bundle  $K^{-1}(M,J)= \gL^m(T^{1,0}M)$, with respect to the induced Hermitian metric by $g$,  satisfy 
\begin{equation}\label{paul}
\nabla^B_X = \nabla^C_X  +  i \theta_J(JX),
\end{equation}
where 
\begin{equation*}
  \theta_J:= \IP{ \omega_J , \d\omega_J} = - \omega_J^{-1} \hook \d\omega_J
  \end{equation*}
 is the {\it Lee form} of $(g, J)$.  It follows from \eqref{paul} that the corresponding Ricci curvatures are related by
\begin{equation}\label{ricci-relation}
\rho^B = \rho^C + \d  (J^{\sharp} \theta_J),
\end{equation}
where for a $1$-form we have set $(J^{\sharp} \alpha)(X) := -(J^*\alpha)(X)= -\alpha(JX)$ for the induced action of $J$ on $T^*M$,  compatible with the $g$-duality between $TM$ and $T^*M$. Furthermore, as the $\nabla^C$ induces the Cauchy-Riemann operator $\bar \partial$ on the canonical bundle $\Omega^{(m,0)}(M, J)$, the Chern-Ricci form can be computed from any (local) holomorphic section $\Theta$ of  $\Omega^{(m,0)}(M, J)$,  via the formula 
\begin{equation}\label{chern-form}
\rho^C = -\tfrac{1}{2} \d\d^c_J \log\left(\frac{\omega_J^{[m]}}{v_{\Theta}}\right),
\end{equation}
where $v_{\Theta}:= (\sqrt{-1})^m(\Theta \wedge \bar \Theta)$.  Lastly we record an identity relating the Bismut-Ricci form and the Riemannian Ricci curvature.  This follows from (\cite{IP}, Proposition 3.1), taking in mind the sign difference for the torsion $H$.
\begin{equation}\label{Bismut-Ricci-pluriclosed}
\begin{split}
 \rho^B (X, JY)  =&\ {\rm Ric}^g(X,Y) - \tfrac{1}{2}\langle (\imath_X H), (\imath_{Y} H) \rangle_g + \tfrac{1}{2}({\mathcal L}_{\theta^{\sharp}} g)(X,Y) \\
                           &\ -  \tfrac{1}{2}(\delta^g H)(X,Y)  + \tfrac{1}{2} (\d\theta)(X, Y) -  \tfrac{1}{2} (\imath_{\theta^{\sharp}} H)_{X,Y},
\end{split}
\end{equation}

\subsection{Generalized K\"ahler structures of symplectic type} 

\begin{defn} Given a smooth manifold $M$, a \emph{generalized K\"ahler structure} is a triple $(g, I, J)$ consisting of a Riemannian metric $g$, and two integrable complex structures $I$ and $J$ compatible with $g$ such that
\begin{align*}
\d^c_I \gw_I = H = - \d^c_J \gw_J, \qquad \d H = 0.
\end{align*}
\end{defn}

\begin{defn} \label{d:sympGK} Given a smooth manifold $M$, a generalized K\"ahler structure $(g, I, J)$ is of \emph{symplectic type} if
\begin{align} \label{f:symptype}
\det (I + J) \neq 0
\end{align}
everywhere on $M$.  In this setting we define
\begin{align*}
F = - 2 g (J + I)^{-1}, \qquad b = - g (J + I)^{-1}(I - J), \qquad F^{\sharp} = F g^{-1} = - 2 (J + I)^{-1}.
\end{align*}
It follows \cite{Gualtieri-Poisson} that $F$ is non-degenerate and tames both $J$ and $I$, and $b$ is a skew-symmetric tensor which further satisfies $b \in \Wedge^{2,0 + 0,2}_I \cap \Wedge^{2,0 + 0,2}_J$.  Furthermore, direct computations yield the basic relationship
\begin{align}\label{basic}
- F J = g + b, \qquad -FI =g - b.
\end{align}
This decomposition yields the further useful inequality 
\begin{align}\label{volume}
F^m \ge \omega_J^m= m! dV_g.
\end{align}
Several differential operators are relevant for these structures.  In particular, we set
\begin{align*}
d^c_I = I d I, \qquad d^c_J = J d J, \qquad d_F = F^{\sharp} d F^{\sharp}.
\end{align*}
Using the integrability conditions for generalized K\"ahler structures it furthermore follows that (cf. \cite[Lemma~2.7]{AS})
\begin{align*}
\d F = 0, \qquad H =  \d b.
\end{align*}
\end{defn}

\begin{rmk} It turns out (cf. \cite{EFG} Proposition 1) that the data above is equivalently described by fixing first a symplectic form $F$, then asking for an integrable complex structure $J$ which is tamed by $F$, and furthermore the almost complex structure
\begin{align*}
I = - F^{-1} J^* F
\end{align*}
is integrable.  Given these conditions, the formulas in Definition \ref{d:sympGK} can be inverted to derive $g$ and $b$.  This in turn allows for an explicit derivation of the associated \emph{generalized complex structures} (cf. \cite{Gualtieri-PhD, Gualtieri-Ann}), one of which is of `symplectic type,' determined explicitly by the global closed pure spinor $F$.
\end{rmk}

K\"ahler metrics are naturally interpreted as symplectic-type generalized K\"ahler, setting $I = J$.  Furthermore, given a hyperK\"ahler structure $(g,I,J,K)$, the triple $(g,I,J)$ is GK of symplectic type, in fact is of \emph{nondegenerate} type (cf. \cite{SNDG}, \cite{AS}).  Hamiltonian deformations of these structures are examples of non-K\"ahler symplectic-type GK structures (\cite{AGG}), as are Hitchin's Poisson deformations of K\"ahler metrics on Fano surfaces \cite{HitchinJSG}.  We further record here a basic further identity:

\begin{lemma}\label{symplectic-type} Let $(g, I, J)$ be a generalized K\"ahler structure of symplectic type and denote by $\theta_J= J\delta^g \omega_J$ and $\theta_I := I\delta^g \omega_I$ the Lee forms. Then we have
\[  \frac{1}{2}d \log \det (I+J)= - (b \righthalfcup d b) = \theta_J + \imath_{\theta_J^{\sharp}} b + \delta^g b =  \theta_I  -\imath_{\theta_I^{\sharp}} b  - \delta^g b.   \]
\begin{proof} This follows directly from \cite{AS} Lemmas 2.10 and 2.13.
\end{proof}
\end{lemma}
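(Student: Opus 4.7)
The plan is to prove each equality in turn, observing first that the $\theta_I$-expression in the fourth position is obtained from the $\theta_J$-expression in the third by the symmetry $(I,b)\leftrightarrow(J,-b)$ inherent in the defining relations $-FJ = g+b$, $-FI = g-b$: replacing $J$ by $I$ and simultaneously flipping the sign of $b$ leaves $-(b\righthalfcup db)$ unchanged and converts $\theta_J + \imath_{\theta_J^{\sharp}}b + \delta^g b$ into $\theta_I - \imath_{\theta_I^{\sharp}}b - \delta^g b$. It therefore suffices to prove
\[
\tfrac{1}{2}d\log\det(I+J) \;=\; -(b\righthalfcup db) \;=\; \theta_J + \imath_{\theta_J^{\sharp}}b + \delta^g b.
\]
The structural ingredient underlying both identities is the pointwise decomposition $F = \omega_J + \beta$, with $\beta(X,Y) := b(X,JY)$ a 2-form of type $(2,0)+(0,2)$ with respect to $J$, together with the symplectic condition $dF = 0$, which reads $d\omega_J = -d\beta$.

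For the first equality, I would exploit $F^{\sharp} = -2(I+J)^{-1}$ to reduce it to a computation of $\det F^{\sharp}$. Writing $F^{\sharp} = g^{-1}F = g^{-1}\omega_J + g^{-1}\beta = J(\Id - b^{\sharp})$, with $b^{\sharp} := g^{-1}b$ a $g$-antisymmetric endomorphism of $TM$ (whose eigenvalues come in pairs $\pm i\lambda_j$), and using $\det J = 1$, yields
\[
\det(I+J) \;=\; 4^m/\det(\Id - b^{\sharp}).
\]
Taking $d\log$ of both sides and applying the identity $d\log\det M = \tr(M^{-1}dM)$, the symplectic condition $dF = 0$ is then used to eliminate the contribution of $d\omega_J$, and the resulting trace $\tr((\Id - b^{\sharp})^{-1}\,db^{\sharp})$ collapses to exactly $-2(b\righthalfcup db)$. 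This is the content of \cite[Lemma~2.13]{AS}, the principal algebraic point being that the higher-order contributions in the expansion of $(\Id-b^{\sharp})^{-1}$ cancel against the cyclic terms of $db$.

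For the second equality, I would start from the defining formula $\theta_J = -\omega_J^{-1}\righthalfcup d\omega_J$ and substitute $d\omega_J = -d\beta$ to obtain $\omega_J^{-1}\righthalfcup d\beta = \theta_J$. I would then convert the $\omega_J^{-1}$-pairing into a $g^{-1}$-pairing through the bivector identity $\omega_J^{-1} = -Jg^{-1}$ and the 2-form identity $\beta = bJ$. The accumulated $J$-transpositions split into three contributions on the right: the $g^{-1}$-contraction $\delta^g b$, a correction $\imath_{\theta_J^\sharp}b$ arising from commuting $J$ past the exterior derivative (using integrability of $J$ together with the type constraint $b\in\Lambda^{2,0+0,2}_J$), and the $\theta_J$ term itself. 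The main obstacle is exactly this algebraic bookkeeping: one must track carefully how $J$ commutes with $d$ and $\delta^g$, with each commutator generating both Lee-form and Bismut-torsion $H=db$ corrections, and the type decomposition of $b$ relative to $J$ must be handled consistently. This computation is carried out in \cite[Lemma~2.10]{AS}, and combining it with the first equality and the symmetry argument above closes the full chain.
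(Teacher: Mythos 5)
Your proposal is correct and follows essentially the same route as the paper: the two nontrivial equalities are exactly the content of \cite{AS} Lemmas 2.13 and 2.10, which is all the paper's own proof invokes. Your additional sketch of those computations and the $(I,b)\leftrightarrow(J,-b)$ symmetry argument for the final expression are consistent with the conventions $-FJ=g+b$, $-FI=g-b$ used here.
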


\subsection{The Bismut-Ricci forms for symplectic-type GK structures}

Observe that in the context of generalized K\"ahler geometry we have two Bismut connections,  defined via
\begin{align*}
g(\N^{B,I}_X Y, Z)=&\ g(\N_X Y, Z) - \tfrac{1}{2} \d^c_I \gw_I (X,Y,Z) =
g(\N_X Y, Z) - \tfrac{1}{2} H(X,Y,Z),\\
g(\N^{B,J}_X Y, Z) =&\  g(\N_X Y, Z) - \tfrac{1}{2} \d^c_J \gw_J (X,Y,Z) =
g(\N_X Y, Z) + \tfrac{1}{2} H(X,Y,Z).
\end{align*}
For a generalized K\"ahler structure $(g, I, J)$, we denote  by $\rho^C_J, \rho^C_I$ and $\rho^B_J, \rho^B_I$ the Chern and Bismut-Ricci forms of the Hermitian structure $(g, J)$ and $(g, I)$, respectively.  We next derive a relationship between the two associated Bismut-Ricci forms for GK structures of symplectic type.

\begin{prop}\label{p:Ricci-difference}
	Let $(g,I,J)$ be a generalized K\"ahler structure of symplectic type. Then
	\begin{equation*}
	\rho_J^B(X,Y)-\rho_I^B(X,Y)=\tfrac{1}{2}(\mathcal L_{\theta_J^\sharp-\theta_I^\sharp}F)(X,Y).
	\end{equation*}
\end{prop}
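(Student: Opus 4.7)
The plan is to compute both sides explicitly and match them using the pluriclosed-Hermitian identity \eqref{Bismut-Ricci-pluriclosed} applied to each of $(g,J)$ and $(g,I)$, combined with Cartan's magic formula on the right-hand side, and Lemma~\ref{symplectic-type} as the principal algebraic bridge between the Lee-form difference and the $2$-form $b$.

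For the left-hand side I would invoke \eqref{Bismut-Ricci-pluriclosed} twice. Since the GK condition gives $d^c_I\omega_I=-d^c_J\omega_J=H$, the Bismut torsion $3$-form entering \eqref{Bismut-Ricci-pluriclosed} equals $H$ for $(g,J)$ and $-H$ for $(g,I)$. Both formulas express $\rho_J^B(X,JY)$ (respectively $\rho_I^B(X,IY)$) in terms of $\mathrm{Ric}^g$, the quadratic contraction $\langle \iota_X H,\iota_Y H\rangle_g$ (identical in the two cases), the symmetric tensors $\mathcal L_{\theta^\sharp}g$, and the antisymmetric tensors $\delta^g H$, $d\theta$, $\iota_{\theta^\sharp}H$. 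After composing with $J^{-1}=-J$ (respectively $I^{-1}=-I$) in the second slot to recover $\rho_J^B(X,Y)$ and $\rho_I^B(X,Y)$ and subtracting, the Riemannian Ricci and the $\theta$-independent $H$-quadratic terms cancel, leaving residual contributions involving only $\theta_J,\theta_I$, and $H$.

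For the right-hand side, since $dF=0$, Cartan's magic formula yields
\[
\tfrac12\mathcal L_V F=\tfrac12\,d\,\iota_V F,\qquad V:=\theta_J^\sharp-\theta_I^\sharp.
\]
Using \eqref{basic}, $-FJ=g+b$ and $-FI=g-b$, one expands $\iota_V F$ explicitly in terms of $g(V,\cdot)$ and $b(V,\cdot)$; taking $d$ and invoking $db=H$ produces an exact $2$-form expressible in $\nabla V$, $H$, and derivatives of $b(V,\cdot)$. The two expressions are reconciled by applying Lemma~\ref{symplectic-type}, from which one extracts the identity
\[
\theta_J-\theta_I=-\iota_{(\theta_J^\sharp+\theta_I^\sharp)}b-2\delta^g b,
\]
together with its exterior derivative. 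These substitutions convert the remaining $\theta$-dependence on the left-hand side into the $b$-derivative terms produced on the right-hand side, closing the identity.

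The main obstacle will be the bookkeeping involved in separating symmetric from antisymmetric contributions in $X,Y$: the formula \eqref{Bismut-Ricci-pluriclosed} mixes both types (for instance $\mathcal L_{\theta^\sharp}g$ is symmetric whereas $\rho^B$ is antisymmetric), and the asymmetric substitutions $JY$ on one side and $IY$ on the other require each term to be carefully re-expressed in the common variables $(X,Y)$ before subtraction. I expect the GK integrability constraint $dH=0$ together with Lemma~\ref{symplectic-type} to be precisely what forces the symmetric residues to cancel and the antisymmetric part to reorganize into $\tfrac12 d\iota_V F=\tfrac12\mathcal L_V F$.
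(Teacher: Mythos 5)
Your strategy diverges from the paper's and, as sketched, contains a step that fails. The problem is in your treatment of the left-hand side. Applying \eqref{Bismut-Ricci-pluriclosed} to $(g,J)$ and to $(g,I)$ (with torsion $-H$) gives formulas for $\rho_J^B(X,JY)$ and $\rho_I^B(X,IY)$ whose ``common'' terms $\mathrm{Ric}^g(X,Y)$ and $\langle\iota_XH,\iota_YH\rangle_g$ agree \emph{as functions of $(X,Y)$}. But to recover $\rho_J^B(X,Y)$ and $\rho_I^B(X,Y)$ you must substitute $Y\mapsto -JY$ in the first formula and $Y\mapsto -IY$ in the second; after subtraction the terms you claim cancel actually leave residues such as $\mathrm{Ric}^g(X,(I-J)Y)$ and $\tfrac12\langle\iota_XH,\iota_{(J-I)Y}H\rangle_g$, which do not vanish for a non-K\"ahler structure and are not expressible through $\theta_I,\theta_J,H$ by Lemma~\ref{symplectic-type} alone. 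Eliminating them requires genuinely generalized K\"ahler curvature identities --- the $I$-invariance of $\rho_J^B$ and the identity $g((\mathcal L_{\theta_I^\sharp-\theta_J^\sharp}I)X,Y)=\rho_J^B(X,[J,I]Y)$ of \eqref{GRF-Book} --- which are absent from your argument and cannot be extracted from \eqref{Bismut-Ricci-pluriclosed}, \eqref{basic} and Cartan's formula.

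The paper sidesteps this trap: it uses \eqref{Bismut-Ricci-pluriclosed} only to read off the \emph{symmetric} combination $(\mathcal L_{\theta_I^\sharp-\theta_J^\sharp}g)(X,Y)=\rho_I^B(X,IY)+\rho_I^B(Y,IX)-\rho_J^B(X,JY)-\rho_J^B(Y,JX)$, where both structures are evaluated at the same arguments so the Ricci and $H$-quadratic terms genuinely cancel, and it computes $\mathcal L_VF$ not by Cartan's formula but by the Leibniz rule applied to $F=-2g(I+J)^{-1}$, converting $\mathcal L_VI$ and $\mathcal L_VJ$ into curvature via \eqref{GRF-Book} and then simplifying with $[J,I](I+J)^{-1}=J-I$ and the invariance of $\rho_J^B$ (resp.\ $\rho_I^B$) under $I$ (resp.\ $J$). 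If you want to salvage a direct computation along your lines, you would at minimum need to import those two facts from \cite{GRFBook}; with only the ingredients you list, the identity does not close.
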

\begin{proof}
An important identity~\cite[Lemma 9.27]{GRFBook} reads as
\begin{equation}\label{GRF-Book}
g((\mathcal{L}_{\theta_I^\sharp-\theta_J^\sharp}I) X,Y)=\rho_J^B(X,[J,I]Y),
\end{equation}
where the upper script $\sharp$ stands for the bundle map $g^{-1} : T^*M \to TM$. 
	Using this together with the analogous identity involving $\mathcal{L}_{\theta_I^\sharp-\theta_J^\sharp}J$ we have
	\[
	\begin{split}
		g((\mathcal{L}_{\theta_I^\sharp-\theta_J^\sharp}I) X,Y)=\rho_J^B(X,[J,I]Y), \qquad
		g((\mathcal{L}_{\theta_I^\sharp-\theta_J^\sharp}J) X,Y)=\rho_I^B(X,[J,I]Y).
	\end{split}
	\]
	Next, using \eqref{Bismut-Ricci-pluriclosed}, we find
	\[
	\left(\mathcal{L}_{\theta_I^\sharp-\theta_J^\sharp}g\right)(X,Y)=\rho_I^B(X,IY)+\rho_I^B(Y,IX)-\rho_J^B(X,JY)-\rho_J^B(Y,JX).
	\]
	Combining the above for $F=-2g(I+J)^{-1}$, we find:
	\[
	\begin{split}
	\tfrac{1}{2}(&\mathcal L_{\theta_I^\sharp-\theta_J^\sharp}F)(X,Y) \\
	=&\ -\mathcal L_{\theta_I^\sharp-\theta_J^\sharp} g((I+J)^{-1}X,Y)+ g((I+J)^{-1}\mathcal L_{\theta_I^\sharp-\theta_J^\sharp}(I+J)(I+J)^{-1}X,Y)\\
	=&\ \rho_J^B((I+J)^{-1}X,JY)+\rho_J^B(Y,J(I+J)^{-1}X) - \rho_I^B((I+J)^{-1}X,IY)-\rho_J^B(Y,I(I+J)^{-1}X)\\
	&\ \qquad -\rho_J^B((I+J)^{-1}X,[J,I](I+J)^{-1}Y)-\rho_I^B((I+J)^{-1}X,[J,I](I+J)^{-1}Y)\\
	=&\ \rho_J^B((I+J)^{-1}X,IY)+\rho_J^B(Y,J(I+J)^{-1}X)-\rho_I^B((I+J)^{-1}X,JY)-\rho_J^B(Y,I(I+J)^{-1}X)\\
	=&\ -\rho_J^B(X,Y)+\rho_I^B(X,Y),
	\end{split}
	\]
	where we used that $[J,I](I+J)^{-1}=(J-I)$ and that $\rho_J^B$ (resp.\,$\rho_I^B$) is $I$ invariant (resp.\,$J$-invariant), see~\cite[Lemma 9.26]{GRFBook}.
\end{proof}

\begin{prop}\label{p:symplectic-Ricci-potential} Let $(g, I, J)$ be a generalized K\"ahler structure of symplectic type and $v_{\Theta}= (\sqrt{-1})^m\Theta \wedge \bar \Theta$ a local holomorphic volume form of $(M, J)$, associated to a local non-vanishing section $\Theta$ of the canonical line bundle $\Omega^{m, 0}(M,J)$. Then the smooth function
\begin{align*}
\Phi_{J, \Theta}:= -\frac{1}{2}\left[ \log\left(\frac{F^{[m]}}{v_{\Theta}}\right) + \log\left(\frac{F^{[m]}}{\omega_J^{[m]}}\right)\right]
\end{align*}
is a potential for $\rho^B_I$, i.e.
$\rho^B_I= \d\d^c_J \Phi_{J, \Theta}$.
\end{prop}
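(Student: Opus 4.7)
The strategy is to compute $dd^c_J \Phi_{J,\Theta}$ directly and match it with $\rho^B_I$ using the curvature identities established earlier in the section. First, from the defining formula $F = -2g(I+J)^{-1}$ together with $\omega_J = gJ$, a Pfaffian computation gives the pointwise identity
\[
F^{[m]}/\omega_J^{[m]} = 2^m/\sqrt{\det(I+J)},
\]
the sign being determined by the taming property $F^{[m]}>0$. Substituting this into the definition of $\Phi_{J,\Theta}$ yields, up to an irrelevant additive constant,
\[
\Phi_{J,\Theta} = \tfrac{1}{2}\log\det(I+J) - \tfrac{1}{2}\log\bigl(\omega_J^{[m]}/v_\Theta\bigr).
\]
Applying $dd^c_J$ and invoking \eqref{chern-form}, we obtain
\[
dd^c_J \Phi_{J,\Theta} = \rho^C_J + \tfrac{1}{2}dd^c_J\log\det(I+J).
\]

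Let $V := \theta_J^\sharp - \theta_I^\sharp$. Combining \eqref{ricci-relation} with Proposition~\ref{p:Ricci-difference}, and using $\mathcal L_V F = d\iota_V F$ (from $dF = 0$), we may write
\[
\rho^B_I = \rho^C_J + d(J^\sharp\theta_J) - \tfrac{1}{2}d\iota_V F.
\]
Thus the claim $\rho^B_I = dd^c_J\Phi_{J,\Theta}$ is equivalent to the closedness of the $1$-form
\[
\eta := \tfrac{1}{2}d^c_J\log\det(I+J) - J^\sharp\theta_J + \tfrac{1}{2}\iota_V F.
\]
Using $d^c_J f = J^\sharp df$ on functions together with the first equation of Lemma~\ref{symplectic-type}, one has $\tfrac{1}{2}d^c_J\log\det(I+J) = J^\sharp\bigl[\theta_J + \iota_{\theta_J^\sharp}b + \delta^g b\bigr]$. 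Inverting $-FJ = g+b$ gives $F = \omega_J + b(\cdot,J\cdot)$, from which a short computation yields $\iota_V F = -J^\sharp(\theta_J - \theta_I) - J^\sharp\iota_V b$. Substituting these and applying the second equation of Lemma~\ref{symplectic-type} (to replace $\iota_{(\theta_J^\sharp + \theta_I^\sharp)}b + 2\delta^g b$ by $\theta_I - \theta_J$), the form $\eta$ collapses to $J^\sharp(\theta_I - \theta_J)$.

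The principal remaining obstacle is then to verify that $J^\sharp(\theta_I - \theta_J)$ is $d$-closed. I expect this to follow from a direct computation exploiting the integrability of both $I$ and $J$ together with the generalized K\"ahler compatibility $d^c_I\omega_I = -d^c_J\omega_J$, both specific to the symplectic-type GK structure; alternatively, it should be obtainable from the detailed identities developed in~\cite{AS}.
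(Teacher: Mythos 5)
Your strategy is in substance the same as the paper's: both arguments reduce the claim to an identity of $1$-forms, namely $\tfrac{1}{2}\d^c_J\log\det(I+J) = J^{\sharp}\theta_J - \tfrac{1}{2}\imath_V F$ with $V=\theta_J^{\sharp}-\theta_I^{\sharp}$, using \eqref{chern-form}, \eqref{ricci-relation}, Proposition~\ref{p:Ricci-difference} and Lemma~\ref{symplectic-type}. The problem is a sign error in your formula for $\imath_V F$, and it is not harmless. From $-FJ=g+b$ (with the paper's convention $\omega_J(X,Y)=g(JX,Y)$, so that $F=\omega_J+b(\cdot,J\cdot)$ as you say) one gets $\imath_V\omega_J=g(JV,\cdot)=-(\theta_J-\theta_I)(J\cdot)=+J^{\sharp}(\theta_J-\theta_I)$, hence $\imath_V F = J^{\sharp}(\theta_J-\theta_I)-J^{\sharp}\imath_V b$ --- the \emph{opposite} sign on the first term from what you wrote. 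With the correct sign your $\eta$ collapses to $J^{\sharp}\big[\tfrac{1}{2}(\theta_I-\theta_J)+\tfrac{1}{2}(\theta_J-\theta_I)\big]=0$ identically, and the proof is complete; this is precisely the paper's computation, which establishes equality of the $1$-forms before ever applying $\d$.

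With your sign, you are left needing $\d\big(J^{\sharp}(\theta_I-\theta_J)\big)=0$, and this ``remaining obstacle'' cannot be overcome: that $1$-form is not closed for a general symplectic-type GK structure. Indeed, using the corrected identity, $\d\big(J^{\sharp}(\theta_J-\theta_I)\big)=\d\imath_V F+\d\big(J^{\sharp}\imath_V b\big)=2(\rho^B_J-\rho^B_I)+\d\big(J^{\sharp}\imath_V b\big)$, which has no reason to vanish. More simply: the quantity $\eta$ is the same object in both computations, so your answer $J^{\sharp}(\theta_I-\theta_J)$ could only be reconciled with the correct answer $0$ if $\theta_I=\theta_J$, which fails in every non-K\"ahler example. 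So the gap is not a deferred technical verification but a genuine error; once the sign in $\imath_V F$ is fixed, the residual term disappears and no closedness argument is needed.
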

\begin{proof}  Using \eqref{chern-form} and $F^{[m]}/\omega_J^{[m]}= (2^m) \det(I+J)^{-\frac{1}{2}}$, we get 
\[
\begin{split}
\d \d^c_J \Phi_{J, \Theta} =&\ \rho^C_J  + \tfrac{1}{2} \d \d_J^c \log \det (I+J)
                                    = \rho^C_J + \tfrac{1}{2} \d\left(J(\theta_J + \theta_I) + J b (\theta_J^{\sharp} - \theta_I^{\sharp})\right) \\
                                    =&\ \rho^C_J + \d (J \theta_J) +  \tfrac{1}{2} \d \big(F(\theta_I^{\sharp}-\theta_J^{\sharp})\big)
                                    =\ \rho^B_J  + \tfrac{1}{2} \cL_{\theta_{I}^{\sharp} - \theta_J^{\sharp}} F
                                    =\ \rho^B_I, 
\end{split}
\]
where for passing from the first to the second line we have used Lemma~\ref{symplectic-type}, for passing from the second to the third line we have used \eqref{basic},  and  for passing from the third to the fourth line we have used  Proposition~\ref{p:Ricci-difference} and  \eqref{ricci-relation}.
\end{proof}

\subsection{The generalized K\"ahler-Ricci flow} 

The generalized K\"ahler-Ricci flow (GKRF) is a solution $(g_t, I_t,J)$ of the equation
\begin{equation}\label{e:GKRF}
\begin{split}
\frac{\partial}{\partial t} \omega_J  =&\ - 2\Big(\rho^{B}_J\Big)_J^{1,1},\qquad
\frac{\del}{\del t} I  = \ \cL_{\theta_I^{\sharp} - \theta_J^{\sharp}} I.
\end{split}
\end{equation}
We refer the reader to \cite{GRFBook, ST1, ST2, STGK} for the basic properties of the above equation.  The expression given here is the equivalent expresion for GKRF in the ``$J$-fixed gauge,'' (cf. \cite{GRFBook} Definition 9.22).  Our first goal is to show that the condition of being symplectic type is preserved, which follows by recasting the GKRF in terms of $F$ and $b$.

\begin{lemma}\label{evolution-b} Let $(g_t, I_t, J)$ denote a solution to generalized K\"ahler-Ricci flow \eqref{e:GKRF} which is symplectic-type for all times.  Then the associated one-parameter families of two-forms satisfy
\begin{equation}\label{b}
\frac{\part}{\part t} F_t = - 2\rho^B_J(\omega_t), \qquad
\frac{\part}{\part t} b_t = \Delta_{g_t} b_t  - \mathcal{L}_{\theta_t^{\sharp}}b_t.
\end{equation}
\end{lemma}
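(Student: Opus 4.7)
The plan is to first derive the evolution $\partial_t F = -2\rho^B_J$, then extract the $b$-equation from the decomposition $F = \omega_J + bJ$, and finally identify the result with the claimed heat operator.

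For the $F$-equation, since $J$ is fixed in the chosen gauge, differentiating $F = -2g(I+J)^{-1}$ yields
\[
\frac{\partial F}{\partial t} = -2(\partial_t g)(I+J)^{-1} + 2g(I+J)^{-1}(\partial_t I)(I+J)^{-1}.
\]
The first term is controlled by $\partial_t \omega_J = -2(\rho^B_J)^{1,1}$, while the second, via identity~\eqref{GRF-Book} used in the proof of Proposition~\ref{p:Ricci-difference}, converts $\cL_{\theta_I^\sharp - \theta_J^\sharp}I$ into $\rho^B_J(\cdot,[J,I]\cdot)$. Using the algebraic identity $[J,I](I+J)^{-1} = J - I$ together with Proposition~\ref{p:Ricci-difference} itself to absorb the remaining $\tfrac12 \cL_{\theta_J^\sharp - \theta_I^\sharp}F$ correction, the right-hand side should collapse to the full real 2-form $-2\rho^B_J$, not merely its $(1,1)_J$-part.

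For the $b$-equation, the identity $-FJ = g+b$ from Definition~\ref{d:sympGK} gives $F = \omega_J + bJ$, so $bJ$ is precisely the $J$-anti-invariant part of $F$. Subtracting $\partial_t\omega_J$ from the previous step yields $\partial_t(bJ) = -2(\rho^B_J)^{2,0+0,2}$, which determines $\partial_t b$ since $J$ is fixed. To match this with $\Delta_g b - \cL_{\theta_J^\sharp}b$, apply formula~\eqref{Bismut-Ricci-pluriclosed}: the symmetric Ricci, Lie-derivative, and $\langle \iota H,\iota H\rangle$ terms belong to the $(1,1)_J$-part and vanish under the anti-invariant projection, leaving $-\tfrac12\delta^g H + \tfrac12 d\theta_J - \tfrac12 \iota_{\theta_J^\sharp}H$. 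Substituting $H = db$, together with $\delta^g db = \Delta_g b - d\delta^g b$ and Cartan's formula $\iota_{\theta_J^\sharp}db = \cL_{\theta_J^\sharp}b - d\iota_{\theta_J^\sharp}b$, this rewrites as
\[
-\tfrac12\Delta_g b - \tfrac12 \cL_{\theta_J^\sharp}b + \tfrac12 d\bigl(\delta^g b + \theta_J + \iota_{\theta_J^\sharp}b\bigr).
\]
By Lemma~\ref{symplectic-type} the parenthesis equals $\tfrac12 d\log\det(I+J)$, which is exact, so the trailing term vanishes; multiplying by $-2$ recovers the stated evolution.

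The delicate point is the final cancellation: the various torsion and Lee-form terms must organize themselves to collapse precisely via Lemma~\ref{symplectic-type}. This is the symplectic-type incarnation of the pluriclosed flow identity $\partial_t b = \Delta_g b - \cL_{\theta^\sharp}b$, and it hinges crucially on the symplectic-type constraint $\det(I+J)\neq 0$ and the associated identities from~\cite{AS} packaged into Lemma~\ref{symplectic-type}.
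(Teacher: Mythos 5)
Your proposal is correct and follows essentially the same route as the paper's proof: the paper likewise differentiates $F=-2g(I+J)^{-1}$ and collapses the result to $-2\rho^B_J$ using \eqref{GRF-Book}, the $I$-invariance of $\rho^B_J$ and $[J,I](I+J)^{-1}=J-I$, and it obtains the $b$-equation from the $J$-anti-invariant projection of \eqref{Bismut-Ricci-pluriclosed} via $H=\d b$, Cartan's formula and Lemma~\ref{symplectic-type} exactly as you do (the paper merely records that projection identity first, as the statement that the two displayed equations are equivalent). The only blemish is a bookkeeping slip in the last step: with your stated convention $\delta^g \d b=\Delta_g b-\d\delta^g b$, multiplying $-\tfrac12\Delta_g b-\tfrac12\cL_{\theta_J^\sharp}b$ by $-2$ yields $\Delta_g b+\cL_{\theta_J^\sharp}b$ rather than $\Delta_g b-\cL_{\theta_J^\sharp}b$, so the sign conventions for the Laplacian and for the composition $bJ$ must be tracked consistently to land on the stated formula.
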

\begin{proof} By \eqref{Bismut-Ricci-pluriclosed},   using that $H =  \d b$ and Lemma~\ref{symplectic-type}, we obtain 
\begin{equation}\label{skew}
\begin{split}
(- 2 J \rho^B_J)^{\rm skew} =&\ -\delta^{g} H +  \d\theta - \imath_{\theta^{\sharp}}H = \Delta_{g} b   - \mathcal{L}_{\theta^{\sharp}}b.
                                                \end{split}
\end{equation}
As $\pi_{\Wedge^{1,1}_J} F = \gw_J$ and $(-FJ)^{\rm skew} = b$, equation \eqref{skew} shows that the two equalities in the lemma are equivalent.  Using the identity \eqref{GRF-Book} and equations~\eqref{e:GKRF}
we compute below the evolution of $F=-2g(I+J)^{-1}$:
\begin{equation*}
\begin{split}
& \frac{\del}{\del t} F(X,Y)
=\ 
2\left(\frac{\del}{\del t}g\right)(X,(I+J)^{-1}Y)
+2g\left(X,\frac{\del}{\del t}(I+J)^{-1}Y\right)\\
&\ =
2\left(\rho^B_J(JX,(I+J)^{-1}Y)-\rho^B_J(X,J(I+J)^{-1}Y) \right) + 2 g\left((I+J)^{-1}X,(\mathcal{L}_{\theta_I^\sharp-\theta_J^\sharp}I)(I+J)^{-1}Y\right)\\
&\ =
2\left(\rho^B_J(JX,(I+J)^{-1}Y)-\rho^B_J(X,J(I+J)^{-1}Y)\right) -2\rho^B\Bigl([J,I](I+J)^{-1}X,(I+J)^{-1}Y\Bigr)\\
&\ =
-2\left(\rho^B_J(X,J(I+J)^{-1}Y)+\rho^B_J(X,I(I+J)^{-1}Y)\right) \ = -2\rho^B_J(X,Y),
\end{split}
\end{equation*}
where for passing to the last line we have used that $\rho^B_J$ is $I$-invariant, see ~\cite[Lemma 9.26]{GRFBook}. \end{proof}

\begin{prop} Let $(g_t, I_t, J)$ denote a solution to generalized K\"ahler-Ricci flow such that $(g_0, I_0, J)$ is symplectic-type.  Then $(g_t, I_t, J)$ is symplectic-type for all $t$ for which the flow is defined.
\begin{proof} If the maximal interval on which the data is symplectic type is $[0,T), T < \infty$, but the solution exists smoothly on $[0,T]$.  By integrating the evolution equations over $[0,T]$ it follows that the family of endomorphisms $J + I_t$ extends smoothly across time $T$.  Using the evolution equation for $F$ from Lemma \ref{evolution-b} and integrating in time we also conclude that
\begin{align*}
(J + I_t)^{-1} = - \tfrac{1}{2} g_t^{-1} F_t
\end{align*}
extends smoothly across time $T$.  It follows directly that $J + I_T$ is well-defined and invertible, contradicting maximality of $T$.
\end{proof}
\end{prop}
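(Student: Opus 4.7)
The plan is to argue by contradiction in the manner suggested by Lemma~\ref{evolution-b}. Suppose the symplectic-type condition fails to persist, and let $T<\infty$ be the first time at which $\det(I_t+J)$ vanishes somewhere on $M$. By hypothesis the GKRF solution $(g_t,I_t,J)$ itself is smooth on a closed interval containing $T$, so all the tensors $g_t$, $I_t$, $\omega_{J,t}$ extend smoothly up to and past $T$; the only thing in danger is the invertibility of $J+I_t$ at $t=T$.

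The key idea is to bypass the endomorphism $J+I_t$ and work instead with the two-form $F_t$, which is well defined and smooth only so long as we remain in the symplectic-type regime. Here I would invoke Lemma~\ref{evolution-b}, which gives
\[
\tfrac{\partial}{\partial t}F_t = -2\rho^B_J(\omega_t),
\]
on $[0,T)$. The right-hand side depends only on $(g_t,J)$ (through the Bismut connection of the Hermitian structure), and $(g_t,J)$ extends smoothly to $[0,T]$. Hence $\rho^B_J(\omega_t)$ is uniformly smooth on $[0,T]$, and integrating in time against the smooth initial datum $F_0$ produces a smooth extension of $F_t$ on all of $[0,T]$.

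Now the defining identity $F_t = -2 g_t (I_t+J)^{-1}$, valid on $[0,T)$, can be rewritten as
\[
(I_t+J)^{-1} = -\tfrac{1}{2}\, g_t^{-1} F_t,
\]
and the right-hand side is smooth up to and including $t=T$. In particular $(I_T+J)^{-1}$ exists as a smooth endomorphism of $TM$, which forces $\det(I_T+J)\neq 0$ everywhere. This contradicts the choice of $T$, and hence the symplectic-type condition is preserved throughout the lifetime of the flow.

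The only place this argument could go wrong is the step that passes from the evolution equation for $F_t$ to the identity $(I+J)^{-1}=-\tfrac12 g^{-1}F$ at the boundary time; I expect this to be the main subtlety, since a priori $F_t$ is only defined via $g_t$ and $I_t$ under the symplectic-type assumption. The correct viewpoint is that Lemma~\ref{evolution-b} turns $F_t$ into an \emph{independently evolving} smooth two-form, and then the algebraic identity propagates to $t=T$ by continuity. Everything else is routine once that conceptual step is in place.
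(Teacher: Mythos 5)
Your proposal is correct and follows essentially the same argument as the paper: both proofs use the evolution equation for $F_t$ from Lemma~\ref{evolution-b} (whose right-hand side depends only on the smoothly extending data $(g_t,J)$) to extend $F_t$ smoothly to $t=T$, then invoke the identity $(I_t+J)^{-1}=-\tfrac{1}{2}g_t^{-1}F_t$ and continuity to conclude that $I_T+J$ remains invertible, contradicting the maximality of $T$. The subtlety you flag at the end is exactly the point the paper's proof relies on, and you resolve it the same way.
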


\subsection{The normalized generalized K\"ahler Ricci flow in the Fano case}

As explained in the introduction, in the special case when the symplectic form $F$ of a symplectic type generalized K\"ahler structure $(g, I, J)$ belongs to $2\pi c_1(M, J)$, we can consider the \emph{normalized} GKRF
\begin{equation}\label{e:NGKRF-2}
\begin{split}
\frac{\partial}{\partial t} \omega_J  =&\ - 2\Big(\rho^{B}_J\Big)_J^{1,1} + 2\omega_J, \qquad
\frac{\del}{\del t} I  =\ \cL_{\theta_I^{\sharp} - \theta_J^{\sharp}} I.
\end{split}
\end{equation}
To get a sense of the possible global existence behaviour of this in the Fano case, we exhibit the behavior of some basic quantities.

\begin{prop} \label{p:NGKRFPoisson} Let $(M^{2n}, J)$ be a Fano manifold, and suppose $(g_t, I_t, J)$ is a solution of (\ref{e:NGKRF-2}) of symplectic type such that $[F_0] = 2 \pi c_1(M, J)$.  Then for all $t$, the cohomology class $[F_t]$ and the Poisson tensor $\gs_t=[I_t,J] g_t^{-1}$ (see~\ref{GK-Poisson}) satisfy
\begin{enumerate}
\item $[F_t] = 2 \pi c_1(M, J)$,
\item $\gs_t = e^{-2t} \gs_0$.
\end{enumerate}
\begin{proof} The solutions $(g_t, I_t)$ and $(\tilde{g}_t, \tI_t)$ of \eqref{e:GKRF} and \eqref{e:NGKRF-2} are related by a reparametrisation
\begin{align*}
\tilde{g}_t: = \tfrac{1}{2}e^{2t} g_{(1-e^{-2t})}, \qquad \tI_t=I_{(1-e^{2t})}.
\end{align*}
It follows directly from Lemma \ref{evolution-b} that
\begin{equation*}
\frac{\partial}{\partial t} \tilde{F} = -2\rho^B_J(\tilde{g}) + 2 \tilde{F},
\end{equation*}
proving claim (1).  To see claim (2), we first note that along a solution of (\ref{e:GKRF}) one has that $\gs$ is fixed in time (\cite[Corollary 1.5]{GS} cf. also ~\cite[Proposition 9.29]{GRFBook}).  Claim (2) then follows easily from the reparameterization relationships above.
\end{proof}
\end{prop}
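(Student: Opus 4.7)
The plan is to reduce both statements to the evolution equation for $F$ in Lemma \ref{evolution-b}, combined with the parabolic rescaling that relates NGKRF to the unnormalized GKRF.

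For claim (1), I will first upgrade Lemma \ref{evolution-b} to the normalized setting. Since $b$ is of type $(2,0)+(0,2)$ with respect to $J$, the additional $+2\omega_J$ term appearing in \eqref{e:NGKRF-2} acts only on the $(1,1)$-part of $F$ and therefore contributes $+2F$ to the $F$-evolution:
\begin{equation*}
\frac{\partial}{\partial t} F_t = -2\rho^B_J(g_t) + 2F_t.
\end{equation*}
By \eqref{ricci-relation}, $\rho^B_J - \rho^C_J$ is exact, so $[\rho^B_J] = [\rho^C_J] = 2\pi c_1(M,J)$. Writing $[F_t] = a(t)\cdot 2\pi c_1(M,J)$, the cohomological shadow of the displayed equation is the scalar ODE $a'(t) = 2a(t)-2$ with $a(0)=1$, whose unique solution is $a(t)\equiv 1$.

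For claim (2), I introduce the rescaling $\tilde{g}_t = \tfrac{1}{2}e^{2t} g_{s(t)}$, $\tilde{I}_t = I_{s(t)}$ with $s(t) = 1-e^{-2t}$, which converts a solution of the unnormalized GKRF \eqref{e:GKRF} into a solution of NGKRF \eqref{e:NGKRF-2}. This is verified by direct substitution, the exponential time factor generating exactly the $+2\tilde{F}$ term on the right-hand side via the chain rule. The key input is now \cite[Corollary 1.5]{GS} (see also \cite[Proposition 9.29]{GRFBook}), which asserts that the real Poisson tensor $\sigma = [I,J]g^{-1}$ is \emph{constant in time} along solutions of unnormalized GKRF. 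Since $\tilde{g}_t^{-1} = 2e^{-2t} g_{s(t)}^{-1}$ and $[\tilde{I}_t, J] = [I_{s(t)}, J]$, we compute
\begin{equation*}
\tilde{\sigma}_t = [\tilde{I}_t, J]\tilde{g}_t^{-1} = 2e^{-2t}\sigma_{s(t)} = 2e^{-2t}\sigma_0 = e^{-2t}\tilde{\sigma}_0,
\end{equation*}
where the last equality reflects that $\tilde{\sigma}_0 = 2\sigma_0$ under the same rescaling evaluated at $t=0$.

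The only substantive ingredient is the time-invariance of $\sigma$ under the unnormalized flow, which is imported from the literature; everything else is algebraic manipulation of Lemma \ref{evolution-b} together with the rescaling. The main subtlety to watch for is keeping track of the various scale factors, so that both the initial data and the exponential decay rate emerge consistent with the stated normalization $\tilde{\sigma}_0$ rather than $\sigma_0$.
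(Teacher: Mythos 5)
Your proof is correct and takes essentially the same route as the paper: the normalized evolution $\tfrac{\partial}{\partial t}F = -2\rho^B_J + 2F$ combined with the cohomological ODE gives (1), and the parabolic reparameterization together with the time-invariance of $\sigma$ along the unnormalized flow gives (2), with the scale factor $\tilde\sigma_0 = 2\sigma_0$ tracked correctly. One caveat: your justification of the $+2F$ term is a non sequitur as stated --- if the $+2\omega_J$ modification ``acted only on the $(1,1)$-part of $F$'' it would contribute $+2\omega_J$, not $+2F$; the correct reason is that $F = -2g(I+J)^{-1}$ is linear in $g$ while the $I$-equation is unchanged by the normalization, so adding $2g$ to $\dot g$ adds $2F$ to $\dot F$ (equivalently, derive the $F$-equation from the reparameterization you already set up for claim (2), which is what the paper does).
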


Thus the symplectic form $F$ remains in the canonical class, while the tensor $\sigma$, which measures the deviation of a symplectic-type GK structure from being K\"ahler, decays exponentially.  Based on this, as well as the monotonicity formulae described below in \S \ref{s:MF}, one might expect the normalized GKRF \eqref{e:NGKRF-2} to behave similarly to the much studied normalized K\"ahler-Ricci flow on Fano manifolds. In particular, by a result of Cao~\cite{Cao} the solution of the normalized  K\"ahler-Ricci flow exists for all time, and, by results of Tian-Zhu~\cite{TZ-07, TZ-13}, the global solution of the normalized K\"ahler-Ricci flow composed by automorphisms of $(M, J)$ converges to a K\"ahler-Ricci soliton, provided that the latter exists. Furthermore, by \cite{Zhu}, in the toric case the reduced flow will converge at the level of normalized potentials.  It is natural to ask whether or not similar results hold true for solutions of \eqref{e:NGKRF-2}.  More precisely we have:

\begin{conj}\label{main-conjecture} Suppose $(g, I, J)$ is a symplectic type generalized K\"ahler structure on a Fano manifold $(M, J)$, with $F \in 2\pi c_1(M, J)$. Then
\begin{itemize}
\item The solution of \eqref{e:NGKRF-2} with this initial data exists for all time $t\in [0, \infty)$.
\item If $(M, J)$ admits a K\"ahler-Einstein metric then the global solution of \eqref{e:NGKRF-2} converges, in the $C^{\infty}(M)$ topology, to a K\"ahler-Einstein metric.
\item If $(M, J)$ admits a K\"ahler-Ricci soliton, with a soliton  vector field $K$, and $(g, I, J)$ is invariant by the torus action generated by $K$,  then there exists a smooth family of complex automorphisms $\tau_t$ of $(M, J)$ such that the global solution $g_t$ of \eqref{e:NGKRF-2} pulled-back by $\tau_t$ converges, in the $C^{\infty}(M)$ topology,  to a K\"ahler-Ricci soliton metric in $c_1(M, J)$ with soliton vector field $K$.
\item If $(M, J)$ is a toric Fano variety and $(g, I, J)$ is invariant under the maximal torus  $\T$, then  the reduced equation \eqref{toric-GKRF-Kahler} has a global solution $\phi_t(y)$ defined on $[0, \infty)\times \R^m$, and there are families of points $y_t\in \R^n$ and real constants $c_t$,  such that $\tilde \phi_t (y):= \phi_t(y+ y_t)+ c_t$ converges in $C^{\infty}([0, \infty)\times \R^m)$ to a smooth convex  function $\tilde \phi_{\infty}(y)$ on $\R^m$ which defines a $\T$-invariant K\"ahler Ricci soliton on $M$.
\end{itemize}
\end{conj}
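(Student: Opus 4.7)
For the first part of Conjecture~\ref{main-conjecture} (long-time existence in the general symplectic-type Fano case), I would follow a Cao-type scheme adapted to GK geometry, modeled on the scalar reduction strategy developed later in the paper. By Proposition~\ref{p:NGKRFPoisson}(1), the class $[F_t]=2\pi c_1(M,J)$ is preserved, so one can fix a background K\"ahler form $\bar\omega\in 2\pi c_1(M,J)$ and write $\omega_J(t)=\bar\omega+\d\d^c_J\varphi_t$. Combining \eqref{e:NGKRF-2} with the Bismut-Ricci potential of Proposition~\ref{p:symplectic-Ricci-potential} and the evolution of $b_t$ from Lemma~\ref{evolution-b} should produce a parabolic complex Monge-Amp\`ere type equation for $\varphi_t$ coupled to the $b_t$-evolution. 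The exponential damping $\sigma_t=e^{-2t}\sigma_0$ from Proposition~\ref{p:NGKRFPoisson}(2) controls the deviation from K\"ahler geometry. One then aims for a $C^0$ bound on $\varphi_t$ via maximum principle arguments in the spirit of Section~\ref{s:LTE}; the Evans-Krylov/Calabi regularity theory for pluriclosed flow from \cite{JS,SBIPCF} would supply higher derivative estimates. The new difficulty relative to Theorem~\ref{t:mainthm} is that there is no toric reduction collapsing the coupled $(\omega_J,b)$ evolution to a single convex function.

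For the second and third parts (convergence to KE or KRS), assuming long-time existence, the Perelman-type entropy of Theorem~\ref{t:genconv} is monotone, and the Mabuchi-type $K$-energy developed in Section~\ref{s:MF} is bounded below under the KE hypothesis. Combined with the exponential decay $\sigma_t\to 0$, which drives $I_t$ toward $J$, the argument asymptotically reduces to the K\"ahler-Ricci flow case on a Fano manifold, where one invokes the Perelman-Sesum-Tian and Tian-Zhu convergence schemes. Theorem~\ref{t:genconv}(1) provides the uniform $\kappa$-noncollapsing needed to prevent collapse at infinity, and any $C^\infty$ Cheeger-Gromov subsequential limit from Theorem~\ref{t:genconv}(2) is a KRS, which under the KE hypothesis must be KE. For the soliton case, one passes to the modified Perelman and Mabuchi functionals adapted to the soliton vector field $K$ as in Tian-Zhu, and chooses $\tau_t$ as the flow of a suitable multiple of $K$, exploiting the $K$-invariance of $(g,I,J)$ to work inside a compact subgroup of $\Aut(M,J)$.

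For the fourth part (toric convergence), long-time existence is Theorem~\ref{t:mainthm} and the weak convergence with automorphism normalization is Theorem~\ref{t:convthm}. To promote this to $C^\infty$ convergence of $\tilde\varphi_t(y):=\varphi_t(y+y_t)+c_t$, I would adapt Zhu's convex-geometry estimates for toric KRF. The extra term $\i e^{-2t}B$ inside the determinant in \eqref{toric-GKRF-Kahler} decays exponentially, so uniformly in $t$ the equation is a small perturbation of the toric KRF equation treated by Zhu. The Mabuchi $K$-energy monotonicity of Theorem~\ref{t:convthm} yields uniform $L^1$ control on the Legendre transforms $u_t$ up to affine functions, which, combined with the moment map image being the Delzant polytope of $(M,F,\T)$, bounds the convex-geometric normalization parameters $(y_t,c_t)$ and pins down a KRS limit.

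The principal obstacle is the control of the evolving complex structure $I_t$ in the non-toric parts two and three. Unlike classical KRF, $I_t$ moves by the non-trivial Lie derivative $\cL_{\theta_I^\sharp-\theta_J^\sharp}I$ in \eqref{e:NGKRF-2}, so there is no tautological gauge in which the complex structure is fixed. While Proposition~\ref{p:NGKRFPoisson}(2) provides an $L^\infty$ decay of $[I_t,J]$, propagating this to pointwise $C^\infty$ convergence of $I_t$ through the nonlinear coupling with $(\omega_J,b_t)$ is precisely the step missing from the unconditional results Theorem~\ref{t:genconv} and Theorem~\ref{t:convthm} proved in this paper, and is where substantial new analytic input will be required.
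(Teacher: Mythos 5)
The statement you were asked to prove is Conjecture~\ref{main-conjecture}, which the paper explicitly leaves open: there is no proof of it in the paper, only the partial results Theorem~\ref{t:mainthm} (long-time existence under toric symmetry), Theorem~\ref{t:genconv} (noncollapsing, and convergence \emph{conditional} on uniformly bounded curvature), and Theorem~\ref{t:convthm} (weak $d_1$-convergence in the toric K\"ahler--Einstein case). Your proposal correctly maps these partial results onto the four bullets of the conjecture and, to your credit, honestly flags where new input is needed; but as written it is a research outline, not a proof, and each of its main steps has a genuine gap. For part one, the Cao-type scheme does not go through in the absence of toric symmetry: the Bismut--Ricci potential of Proposition~\ref{p:symplectic-Ricci-potential} involves both $F^{[m]}$ and $\omega_J^{[m]}$, and $F_t$ is not determined by a scalar potential of $\omega_J(t)$, so the flow does not reduce to a parabolic Monge--Amp\`ere equation for a single function $\varphi_t$; the paper's own long-time existence proof hinges entirely on the toric scalar reduction of Propositions~\ref{p:general-reduction} and~\ref{p:phireduction}, and no substitute is offered here. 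For parts two and three, the ``Perelman--Sesum--Tian and Tian--Zhu schemes'' require Perelman's uniform estimates (scalar curvature, diameter, Ricci potential) along the flow, which have not been established for NGKRF; Theorem~\ref{t:genconv}(2) is explicitly conditional on a uniform curvature bound, and even granting a Cheeger--Gromov limit, the paper points out immediately after Theorem~\ref{t:genconv} that $J_\infty$ need not be biholomorphic to $J$ --- so ``under the KE hypothesis the limit must be KE'' does not follow without resolving precisely that jumping-of-complex-structure issue, which the authors tie to $K$-stability.

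For part four, the passage from the weak $d_1$-convergence of Theorem~\ref{t:convthm} to the claimed $C^\infty$ convergence of the normalized potentials is exactly the content the paper declines to prove. Saying that $\sqrt{-1}e^{-2t}B$ is an exponentially small perturbation of Zhu's equation \eqref{complex-KRF} does not yield uniform a priori estimates on $[0,\infty)$: Zhu's argument uses delicate Perelman-type and convex-geometric estimates (uniform control of the normalization points $y_t$, of $\|\nabla\dot\varphi\|$, etc.) that must be re-derived for the perturbed equation \emph{uniformly in $t$}, including for times before the perturbation has decayed, and the Mabuchi monotonicity of Corollary~\ref{Mabuchi-monotone} by itself only controls the flow up to the action of $\T_{\C}$ in the weak topology. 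In short: the conjecture remains a conjecture, your proposal is a plausible roadmap aligned with the paper's stated expectations, but every one of its bullets still requires substantial new analysis beyond what either you or the paper supplies.
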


\section{Toric generalized K\"ahler structures}\label{s:toric-GK}
\subsection{Toric K\"ahler structures}\label{s:toric-Kahler} 

In this section we recall fundamental material on toric K\"ahler structures going back to~\cite{Abreu2, guillemin};  we refer to~\cite{apostolov-notes,donaldson-survey} for comprehensive surveys.  Let $(M, \Fo)$ be a smooth compact symplectic manifold of real dimension $2m$, endowed with an effective Hamiltonian action of a compact $m$-dimensional torus $\T$. We denote by $\tor$ the Lie algebra of $\T$, and  by $\tor^*$ the dual vector space. Let $\mu  : M \to \tor^*$  be a fixed momentum map for the action.  Delzant's theorem~\cite{delzant}   tells us that  the  image of $\mu(M)$ is a Delzant polytope $\Pol \subset \tor^*$, which comes equipped with a minimal set of defining hyperplanes 
\[{\Pol }= \{ x \in \tor^* : L_j(x) =\langle v_j, x\rangle + \lambda_j \ge 0, \, j=1, \ldots, d\},  \]
where $v_j$ are primitive elements of the lattice $\Lambda \subset \tor$ of circle subgroups of $\T$, i.e. $\T= \tor/2\pi \Lambda$.  We denote by  ${\La} =\{L_1(x), \ldots, L_d(x)\}$ the set of the defining affine-linear functions of $\Pol$ as above, which sometimes is referred to as a {\it labelling} of $\Pol$, see \cite{LT}.  According to \cite{delzant},  the data $(\Pol, {\La})$  in turn identify  $(M, \Fo)$,  up to a $\T$-equivariant symplectomorphism,  with   the K\"ahler reduction of $\C^d$ (endowed with its flat K\"ahler structure) by  a real torus of dimension $d-m$. This gives rise to  a {\it canonical}  $\Fo$-compatible $\T$-invariant K\"ahler structure $(g_c,  J_c, \Fo)$ on $M$.
 
We next describe  the set  $\mathcal{K}^{\T}(M,\Fo)$ of $\T$-invariant, $\Fo$-compatible  complex structures on $(M, \Fo)$, or equivalently, the set of  $\T$-invariant K\"ahler metrics  $(g, J, \Fo)$ with fixed K\"ahler form $\Fo$.   Notice that $J_c\in \mathcal{K}^{\T}(M, \Fo)$ by construction.  On the union $M^0:=\mu^{-1}(\mathring{\Pol})$ of the generic orbits of the $\T$-action (where $\mathring{\Pol}$ denotes the interior of $\Pol$), any K\"ahler metric $g$  determined by an element $J$ in $\mathcal{K}^{\T}(M, \Fo)$
has  a general expression due to V.~Guillemin~\cite{guillemin}. In this description, the
momentum map $\mu :  M^0\to \mathfrak{t}^*$ is supplemented by angular coordinates (depending on $J$)
$\ang : M^0 \to \tor/2\pi\Lambda$,  such that the kernel of $\d\ang$ is
orthogonal to the tangent space of the torus orbits. These momentum-angular coordinates $(\mu,\ang)$
identify each tangent space to $M^0$ with $\tor\oplus \tor^*$, and the
symplectic form $\Fo$  and a compatible K\"ahler metric $g$ on $M^0$  have the form
\begin{equation}\label{toricmetric}
\Fo=\langle \d\mu, \d\ang\rangle, \qquad g =\ip{\d\mu, {\mathbf G}(\mu) , \d\mu}+ \ip{ \d\ang,{\mathbf H}(\mu), \d\ang}, 
\end{equation}
where $\ip{\cdot, \cdot}$
denotes contraction of $\tor$ and $\tor^*$, ${\mathbf G}(\mu)$ is  (the pull back by $\mu$) of a positive definite $S^2\tor$-valued function on $\mathring{\Pol}$
(with $S^2\tor$ denoting  the symmetric tensor product of $\tor$),
${\mathbf H}$ is its point-wise inverse in $S^2\tor^*$, and $\ip{\cdot,\cdot,\cdot}$ denotes the
point-wise contraction $\tor^* \times S^2\tor \times \tor^* \to \R$ or the
dual contraction.  The corresponding complex structure $J$  is defined by
\begin{align}\label{toricJ}
J \d\ang = -\ip{ {\mathbf G}(\mu), \d\mu}, \qquad  J \d \mu = \ip{ {\mathbf H}(\mu),  \d\ang}, 
\end{align}
and $J$ is integrable if and only if ${\mathbf G}= {\rm Hess}(u)$ is the Hessian of a
smooth strictly convex function $u(x)$ on $\mathring{\Pol}$, called \emph{symplectic potential} of $(g, J, \Fo)$ (cf.~\cite{Abreu2, guillemin}). This applies in particular to the canonical complex structure $J_c\in \mathcal{K}^{\T}(M,\Fo)$ coming from the Delzant construction, giving rise to canonical angular coordinates $\ang^c$ and symplectic potential~\cite{guillemin}
\begin{equation}\label{u-canonical}
u_c(x) := \frac{1}{2}\sum_{j=1}^d L_j (x)\log L_j(x).
\end{equation}
Necessary and sufficient conditions for the  symplectic potential $u$ to come from a globally defined  K\"ahler metric  $(g, J)$ on $M$ are obtained in \cite{Abreu2,ACGT2,donaldson}.  Here we
mention the conditions established in \cite{donaldson}: 
\begin{prop}\label{p:boundary} A smooth strictly convex function $u(x)$ on $\mathring{\Pol}$ is a symplectic potential of a globally defined $\Fo$-compatible K\"ahler metric in $\mathcal{K}^{\T}(M, \Fo)$ if and only if $u$ satisfies the following boundary conditions:
\begin{bulletlist}
\item  $u(x)$ is smooth and strictly convex in the interior of $\Pol$,  and extends as a continuous function  on $\Pol$ which is smooth and strictly convex on the interior of each face of $\Pol$;
\item  $u(x)-u_c(x)$ extends to a smooth function over $\Pol$, where $u_c(x)$ is given by \eqref{u-canonical}.
\end{bulletlist}
\end{prop}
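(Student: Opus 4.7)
The plan is to deduce the proposition by comparing any candidate symplectic potential $u$ with the canonical potential $u_c$ from the Delzant construction, and by using the Legendre-transform correspondence between symplectic and K\"ahler potentials. Since $(g_c, J_c, \Fo)$ is a globally smooth element of $\mathcal{K}^{\T}(M, \Fo)$ with symplectic potential $u_c$, the two bullets in the statement are together equivalent to saying that $u - u_c$ extends smoothly to $\Pol$ and that $u$ remains strictly convex on the relative interior of every face. The task then reduces to showing that smooth extension of the associated K\"ahler metric across the preimage of a face of $\Pol$ corresponds, under Legendre duality, to such a boundary regularity of $u-u_c$.

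The key ingredient I would use is the local normal form at the boundary strata coming from the Delzant symplectic quotient. Near the preimage $\mu^{-1}(v)$ of a vertex $v$ contained in the facets $\{L_{j_1}=0\},\dots,\{L_{j_m}=0\}$, the construction provides $\T$-invariant $J_c$-holomorphic coordinates $(z_1,\dots,z_m)$ in which $L_{j_i}(\mu)=\tfrac{1}{2}|z_i|^2$ and the remaining $L_j$'s appear as smooth positive functions of $|z_i|^2$; analogous local models exist over codimension-$k$ faces, where only $k$ of the $z_i$ variables are affected. In these coordinates the canonical potential $u_c$ in \eqref{u-canonical} is converted by the Legendre transform $\phi:=\langle x,\d u\rangle-u$ into a local K\"ahler potential $\phi_c$ for $g_c$ that is a smooth function of $|z_1|^2,\dots,|z_m|^2$: the $L\log L$ terms in \eqref{u-canonical} are calibrated precisely to cancel the logarithmic behavior introduced by the change of variables $x_i=\tfrac{1}{2}|z_i|^2$.

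With this normal form in place, both implications follow by tracking the effect of the Legendre transform on the difference $u-u_c$. If $u$ satisfies the stated boundary conditions, then $u-u_c$ extends smoothly to $\Pol$, so on $M^0$ the K\"ahler potential $\phi$ of the metric determined by $u$ satisfies $\phi-\phi_c$ smooth in the variables $z_i$; hence $\phi$ defines a smooth K\"ahler potential for a globally defined $\Fo$-compatible $\T$-invariant metric, whose positivity along each boundary stratum is secured by the strict convexity of $u$ on the corresponding face. Conversely, if $u$ arises from a globally smooth metric in $\mathcal{K}^{\T}(M,\Fo)$, then the local K\"ahler potential $\phi$ in the $J_c$-coordinates $z_i$ differs from $\phi_c$ by a smooth function (both being smooth potentials for cohomologous K\"ahler forms), and inverting the Legendre transform on $\Pol$ yields smoothness of $u-u_c$ at the boundary together with the claimed convexity on each face.

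The main obstacle is establishing rigorously the boundary behavior of the Legendre transform: one must show that, near each face of $\Pol$, the transform sets up a bijection between smooth functions of $|z_i|^2$ on the complex side and smooth functions on $\Pol$ after subtracting $\phi_c$ and $u_c$ respectively. Away from $\partial \Pol$ this is classical smooth Legendre duality; at the boundary it rests on the exact matching of the $L\log L$ terms in \eqref{u-canonical} with the $\log|z_i|^2$ branching of the Delzant quotient charts, and on checking that the strict convexity hypothesis on every face is precisely what is needed for the extended metric to be positive definite along the lower-dimensional orbit-type strata.
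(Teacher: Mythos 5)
The first thing to note is that the paper does not prove Proposition \ref{p:boundary} at all: it is quoted as a known result, with the statement attributed to the references \cite{Abreu2,ACGT2,donaldson} (``Here we mention the conditions established in \cite{donaldson}''), so there is no internal argument to measure yours against. Your outline does follow the standard route of those references --- the Guillemin/Abreu local normal form at the boundary strata of the Delzant quotient combined with Legendre duality --- and the mechanism you identify is the correct one: in the $J_c$-chart at a vertex the defining functions $L_{j_i}$ become smooth positive multiples of $|z_i|^2$, the $L\log L$ terms of $u_c$ Legendre-transform into a potential $\phi_c$ that is a smooth function of the $|z_i|^2$ (e.g.\ $\tfrac12\log(1+|z|^2)$ for $\C P^1$), and smoothness of $u-u_c$ on $\Pol$ then matches smoothness of $\phi-\phi_c$ on $M$, with strict convexity on the open faces accounting for nondegeneracy of the metric along the lower-dimensional orbit strata.

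That said, as a proof the proposal has a genuine gap, and you name it yourself in the last paragraph: the entire content of the proposition is the ``boundary behavior of the Legendre transform,'' and your argument records the statement of that equivalence rather than establishing it. Concretely, one must compute that for $u=u_c+f$ with $f$ smooth on $\Pol$ and $u$ strictly convex on each open face, the tensors $\ip{\d\mu,{\mathbf G},\d\mu}+\ip{\d\ang,{\mathbf H},\d\ang}$ and the complex structure \eqref{toricJ} extend smoothly and nondegenerately across $\mu^{-1}(\del\Pol)$, and conversely --- this is the calculation carried out in \cite{Abreu2}, in first-order form in \cite{ACGT2} (Proposition 1 there), and in \cite{donaldson}; it is not a formal consequence of ``matching the logarithms.'' Two further points need attention in the converse direction: a metric in $\mathcal{K}^{\T}(M,\Fo)$ carries its own angular coordinates, which must first be normalized to $\ang^c$ by a $\T$-equivariant symplectomorphism (the paper cites \cite[Lemma~3]{ACGT2} for exactly this, right after the proposition) before the comparison of $\phi$ with $\phi_c$ in fixed $J_c$-coordinates is meaningful; and the conclusion that $\phi-\phi_c$ is globally smooth from cohomologousness of the forms only holds modulo $\T$-invariant pluriharmonic functions on $M^0$, i.e.\ affine functions of $y$, which corresponds to the harmless ambiguity of adding affine-linear functions to $u$. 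None of this indicates a wrong approach --- it is the approach of the sources the paper cites --- but the central analytic step remains to be done.
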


\begin{defn}\label{d:symplectic-potential}
We denote by $\mathcal{S}(\Pol, {\La})$ the space of functions $u$ satisfying the conditions of Proposition~\ref{p:boundary}.
\end{defn}

A subtle point in the above description is the fact that the angular coordinates $\ang$ depend upon $J$, but one can show~\cite[Lemma~3]{ACGT2} that by pulling back $J$ by a $\T$-equivariant symplectomorphism of $(M,\Fo)$, we can assume that $\ang=\ang^c$, a  normalization we shall implicitly use below without further notice.
It follows that the space $\mathcal{K}^{\T}(M, \Fo)$ modulo  pull-backs by  $\T$-equivariant  symplectomorphisms can be identified with the space $\mathcal{S}(\Pol, \La)$,  modulo  additions with affine-linear functions.  Because of this correspondence, for any  $u\in \mathcal{S}(\Pol, \La)$, with a  slight abuse of notation, we denote by $(g_u, J_u)$ the corresponding $\T$-invariant K\"ahler metric on $M$ (where we implicitly use the canonical angular coordinates $\ang=\ang^c$ to define $(g_u, J_u)$).

It is a basic fact of the theory (see e.g. \cite{LT}) that any two elements of $\mathcal{K}^{\T}(M, \Fo)$ are biholomorphic under a $\T$-equivariant diffeomorphism,   which acts trivially on the cohomology class of  $[\Fo]$, i.e.  for any $J \in\mathcal{K}^{\T}(M, \Fo)$, there exists a $\T$-equivariant diffeomorphism $\Phi$,  such that $\Phi\cdot J = J_c$ and  $\Phi^*\Fo$  is the  K\"ahler form  of a $\T$-invariant K\"ahler metric in the K\"ahler class $[\Fo]$ on the  fixed complex manifold $(M,J_c)$. The converse is also true by the equivariant Moser lemma. These two equivalent descriptions are often referred to as {\it symplectic}  versus {\it complex} point of view, respectively, and will be used in our study below. We  thus give next  an explicit description of this correspondence, based on  an observation from \cite{guillemin}.
 
For any $u\in \mathcal{S}(\Pol, {\La})$, we let 
\begin{align}\label{Legendre}
y= \nabla u, \qquad  \phi(y) + u(x) =\langle y, x\rangle
\end{align}
be the Legendre transform of the strictly convex smooth function $u$ on $\mathring{\Pol}$. Using the compactness of $\mathring{\Pol}$ and the strict convexity of $u$, one can see that 
\[ y=\nabla u : \mathring{\Pol} \to  \tor \cong \R^m \]
is a diffeomorphism whereas $\phi(y)$ is a strictly convex smooth function on $\R^m$.   We denote  by $\mathcal{C}({\R}^m)$ the space of such functions on $\R^m$  and let 
\[
{\mathcal T} : \mathcal{S}({\Pol}, {\La})  \mapsto \mathcal{C}({\R}^m),  \qquad  \mathcal{T} (u(x))  = \phi(y)
\]
be the induced map  on the corresponding Frech\'et spaces.  Notice that  we can recover  $u(x)$  from  its image  $\phi(y)$ by letting $x=(\nabla \phi)(y)$
and using \eqref{Legendre}.   A simple computation using the definition \eqref{Legendre} shows that the differential of $\mathcal{T}$ is given by
\begin{align}\label{Legendre-differential}
(\delta_{u} \mathcal{T})(\dot{u}) = -\dot{\phi}.
\end{align}

It turns out that the Legendre transform $\mathcal{T}$ underlines the geometric correspondence between the symplectic and complex points of view mentioned above. To see this,
for any $J=J_u \in \mathcal{K}^{\T}(M, \Fo)$, we shall  implicitly identify the abstract variable $y=\nabla u \in \tor$ with the function $y : M \to \tor$ given by  the composition $y=\nabla u(\mu)$, and use  $\{y(\mu), \ang^c\}$ as a system of coordinates on $M^0$.  Let us  introduce a basis $\{K_j = \frac{\partial}{\partial \ang_j} \}$ for $\tor$ (usually given by the generators of circle groups of $\T$) and write $\ang=(\ang_1, \ldots, \ang_m)$ and $\mu=(\mu_1, \ldots, \mu_m)$ with respect to this and the dual basis. It  is easily seen that the $1$-forms $\{\d y_1, \ldots, \d y_m, \d \ang_1, \ldots, \d \ang_m\}$ form a dual  basis (at each point of $M^0$) of the basis of commuting real holomorphic vector fields $\{-JK_1, \ldots, -JK_m, K_1, \ldots, K_m\}$.   We can thus consider  two systems of holomorphic  coordinates on $(M^0, J)$: $ \{ y_1 + \sqrt{-1} \ang_1, \ldots, y_m+ \sqrt{-1} \ang_m\}$ and 
 $\{z_1, \ldots, z_m\}$ with $z_j:= e^{y_j + \sqrt{-1} \ang_j}$. Geometrically, the action of $\T$  gives rise to  a  holomorphic action  of the complex torus $\T_{\C} \cong (\C^{*})^m$ on $(M, J)$,  which  is the linear in the coordinates $\{z_1, \ldots, z_m\}$ on $M^0$.  In other words, letting $x_u\in \Pol^0$ be the pre-image of $0\in \tor$ under $\nabla u$ (or, equivalently, the  point of minima of $u$)  and $p_u$  the point in $M^0$ such that $\mu(p_u)= x_{u}$ and $\ang(p_u)=0$,  we have the  identification $(M^0, J)=(\C^*)^m \cdot p_u$ with the orbit of $\T_{\C}$  at $p_u$,   with  $z_j= e^{y_j + \sqrt{-1}\ang_j}$ being the natural complex coordinate on the $j$-th factor $\C^*$   (reflecting the fact that  $z_j(p_u) =1$).  The key observation in \cite{guillemin} is that the K\"ahler form $\Fo$  is written on $(M^0, J)$ as
 \begin{equation}\label{Kahler-toric-potential}
\Fo = \d \d^c_J \phi(y(\mu)) = \sum_{i,j=1}^m \phi_{, pq}(y) \d y_p \wedge \d \ang_q,
 \end{equation}
where $\phi=\mathcal{T}(u)$ and, by Legendre duality,  $\big(\phi_{,pq}(y)\big)  = \big(u_{ij}(x)\big)^{-1} = {\bf G}^{-1}(x)={\bf H}(x)$. 

The above description holds on $M^0$ in complex coordinates  with respect to  $J=J_{u}$. To  relate it to the canonical complex structure $J_c$ of $M$, let  $z^c=(z_1^c, \ldots, z_m^c)$ be  the respective holomorphic  coordinates on $(M^0, J_c)$,  obtained  from the  canonical potential $u_c(x)$ defined by \eqref{u-canonical}. We let $p_0:=p_{u_0}$ be the corresponding  point in $M^0$ so that $(M^0, J_c) = (\C^*)^m \cdot p_0$  with   $(z_1^c, \ldots, z_m^c)$  being the standard coordinates on $(\C^*)^m$. Then,  using the boundary conditions for  $u$ in Proposition~\ref{p:boundary},  the following correspondence is established  in ~\cite[pp. 395-396]{Do-08}:
\begin{prop}\label{p:symplectic-to-complex} The Legendre transform $\mathcal{T}$  defines a bijective map from the space of symplectic potentials   ${\mathcal S}(\Pol, {\La})$   to the space $\mathcal{K}(\R^m, \Fo)$ of strictly convex smooth functions $\phi(y)$ on $\R^m$,    satisfying the following conditions:
\begin{bulletlist}
\item the function defined on $(\C^*)^m\cdot p_0 =  (M^0, J_c)$ by  \[\phi(z):= \phi\big(\tfrac{1}{2}\log|z^c_1|^2, \ldots, \tfrac{1}{2}\log |z^c_m|^2\big)\]  gives rise to  a K\"ahler metric $\omega_{\phi} = \d \d^c \phi$  on $(M, J_c)$.
\item $\phi(z^c)-\phi_c(z^c)$  extends to a smooth function on $(M, J_c)$.
\end{bulletlist} 
In particular, $\mathcal{T}$ gives rise to a bijection between the space of $\T$-invariant $\Fo$-compatible K\"ahler metrics  on $M$,  modulo the action of $\T$-equivariant symplectomorphisms of $(M, \Fo)$, and the space of $\T$-invariant K\"ahler metrics in the K\"ahler class $[\Fo]$ on $(M, J_c)$, modulo the action of $\T_{\C}=(\C^*)^m$. 
\end{prop}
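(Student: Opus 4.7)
The plan is to establish the bijection in three stages: well-definedness of $\mathcal{T}$, the K\"ahler potential identity on the open orbit $M^0$, and the equivalence of the two boundary conditions (the one in Proposition~\ref{p:boundary} for $u$ on $\Pol$, and the smooth extension condition for $\phi$ across $M \setminus M^0$ in $(M, J_c)$). Injectivity is essentially free: by \eqref{Legendre}, $\mathcal{T}$ is involutive up to sign, so any $u \in \mathcal{S}(\Pol,\La)$ is recovered from $\phi$ via $u(x) = \langle \nabla \phi(y), x \rangle - \phi(y)$ with $x = \nabla \phi(y)$. Thus the real content is to verify that the image lies precisely in the space characterized by the two bullets, and to exhibit the inverse construction.

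First, I would check that for $u \in \mathcal{S}(\Pol,\La)$ the gradient $\nabla u \colon \mathring{\Pol} \to \R^m$ is a global diffeomorphism onto $\R^m$. Strict convexity makes $\nabla u$ a local diffeomorphism. The boundary condition that $u - u_c$ extends smoothly across $\Pol$, combined with the explicit form \eqref{u-canonical} of $u_c$, forces $\nabla u$ to blow up along the inward conormal $v_j$ as $L_j(x) \to 0$. This makes $\nabla u$ proper, hence a global diffeomorphism, so $\phi = \mathcal{T}(u)$ is a smooth strictly convex function on all of $\R^m$. Next, on the open orbit $M^0$, after normalizing $\ang = \ang^c$ by a $\T$-equivariant symplectomorphism, the identifications $z_j^c = e^{y_j + \sqrt{-1}\ang_j^c}$ on $(\C^*)^m \cdot p_0 = (M^0, J_c)$ reduce Guillemin's identity \eqref{Kahler-toric-potential} to the statement that $\phi(\tfrac{1}{2}\log|z_1^c|^2, \ldots, \tfrac{1}{2}\log|z_m^c|^2)$ is a K\"ahler potential for $\Fo$ with respect to $J_c$ on $M^0$.

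The main obstacle lies in the third step: transferring the boundary condition on $u$ across $\partial \Pol$ into the smooth extension of $\phi$ across $M \setminus M^0$. The canonical case $u = u_c$ must map to a distinguished $\phi_c = \mathcal{T}(u_c)$ which is a smooth K\"ahler potential for the canonical K\"ahler form on $(M,J_c)$; this can be verified by direct computation using \eqref{u-canonical}. For general $u$, writing $u = u_c + f$ with $f \in C^\infty(\Pol)$, one wants $\phi - \phi_c \in C^\infty(M, J_c)$. Following the local analysis in \cite[pp.~395--396]{Do-08}, one works in a Delzant chart around a $\T$-fixed point $p$ corresponding to a vertex of $\Pol$, where $u_c$ and $\phi_c$ admit explicit expressions in terms of the affine-linear functions $L_j$ and the standard complex coordinates around $p$. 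A careful computation of the Legendre transform in these coordinates shows that smoothness of $f = u - u_c$ up to the vertex in $\Pol$ corresponds exactly to smoothness of $\phi - \phi_c$ in the $(z^c)$-coordinates near $p$. Applying the analogous analysis along each face and edge of $\partial \Pol$ covers all of $M$ and yields the bijection.

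Finally, the identification modulo symmetry groups follows formally: the action of $\T$-equivariant symplectomorphisms on $\mathcal{K}^\T(M, \Fo)$ fixes the underlying symplectic potential up to addition of an affine-linear function on $\tor^*$, and under $\mathcal{T}$ this translation corresponds precisely to the action of $\T_{\C} = (\C^*)^m$ on $(M, J_c)$ by translation in the $y$-variables, i.e. scaling in $(z^c)$.
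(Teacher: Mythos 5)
Your proposal is correct and takes essentially the same route as the paper, which likewise reduces the statement to Guillemin's potential identity on the open orbit and defers the vertex-by-vertex boundary analysis to \cite[pp.~395--396]{Do-08}. One small slip: the inversion formula should read $u(x) = \langle y, x\rangle - \phi(y)$ with $y = (\nabla\phi)^{-1}(x)$, not $u(x) = \langle \nabla\phi(y), x\rangle - \phi(y)$.
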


\subsection{Toric generalized K\"ahler structures}
Inspired by the theory of toric K\"ahler structures, L. Boulanger~\cite{boulanger} and Y.~Wang~\cite{W1,W2}  gave a similar description in momentum-angular coordinates of 
the $\T$-invariant generalized  K\"ahler structures compatible with  a symplectic $2$-form $\Fi$  on a smooth compact toric symplectic manifold $(M, \Fi)$. In the notation  of Section~\ref{s:toric-Kahler},   we let $(M, \Fo, \T)$ be a given toric symplectic manifold with momentum map $\mu: M \to \tor^*$ and $\ang: M^0 \to \tor/2\pi \Lambda$  standard   angular coordinates on $M^0$ (i.e. $\ang=\ang^c$);  choosing a lattice basis of $\tor$,  we write (see \eqref{toricmetric}) 
\begin{equation}\label{omega}
\Fo= \sum_{j=1}^m \d \mu_j \wedge \d \ang_j.\end{equation}

\subsubsection{Toric generalized K\"ahler deformations of type $A$}

Following  \cite{boulanger}, we can construct  a $\T$-invariant  almost complex structure $J$  on $M^0$  from a smooth  non-degenerate field ${\bf \Psi}(x)$ of bilinear forms on $\mathring{\Pol}$, with inverse ${\bf \Psi}^{-1}(x)$, by letting
\begin{equation}\label{boulanger}
J:= \sum_{i,j=1}^m \Big( {\bf \Psi}_{ij}(\mu) \d\mu_i \otimes \frac{\partial}{\partial \ang_j}- ({\bf \Psi}^{-1})_{ij}(\mu)\d\ang_i \otimes  \frac{\partial}{\partial \mu_j}\Big), 
\end{equation}
where $({\bf \Psi}_{ij}(x))$ is the Gram matrix (in the  fixed basis of $\tor$ and $\tor^*$) of  ${\bf \Psi}(x)$.  This is consistent with the description \eqref{toricJ} of toric K\.ahler structures (which correspond to the case  ${\bf \Psi}(x) = {\bf G}(x)= {\rm Hess}(u(x))$).

The integrability of the almost complex structure  $J$  given by \eqref{boulanger} reads as (see  the proof of \cite[Theorem~6]{boulanger})
\begin{equation*}
{\bf \Psi}_{ij, k} ={\bf  \Psi}_{ik,j}.
\end{equation*}
The almost-complex structure $I= -\Fo^{-1} J^*\Fo$  is expressed in  momentum-angular coordinates by 
\begin{equation*}
I = \sum_{i,j=1}^m \Big( ({\bf \Psi}^{\rm T})_{ij} \d\mu_i \otimes \frac{\partial}{\partial \ang_j}- \big({\bf \Psi}^{\rm T}\big)_{ij}^{-1} \d\ang_i \otimes  \frac{\partial}{\partial \mu_j}\Big).
\end{equation*}
Furthermore,  $\Fo$ tames $J$ if and only if the the symmetric  tensor
\begin{equation}\label{g}
\pg :=  -(\Fo J)^{\rm s} = \sum_{i,j=1}^m \Big( ({\bf \Psi}^{\rm s})_{ij} \d\mu_i \d\mu_j + ({\bf \Psi}^{-1})^{\rm s}_{ij} \d\ang_i\d\ang_j\Big)
\end{equation}
is positive definite on $M^0$.  In  the above formula (and in what follows) we use upper-indices ${\rm s}$ and ${\rm a}$ to denote respectively the symmetric and skew-symmetric parts  of bilinear forms and the corresponding Gram matrices.  We shall make an abundant use of  the following identities which hold  for a non-degenerate matrix ${\bf \Psi} =({\bf \Psi}_{ij})$ (they can be checked easily using the polar decomposition of ${\bf \Psi}$).
\begin{equation}\label{linear-algebra}
\begin{split}
& {\bf \Psi} \big({\bf \Psi}^{-1}\big)^{\rm s} {\bf \Psi}^{\rm T} = {\bf \Psi}^{\rm s}, \, \, {\bf \Psi} \big({\bf \Psi}^{-1}\big)^{\rm a} {\bf \Psi}^{\rm T} = -{\bf \Psi}^{\rm a}; \\
& {\bf \Psi}^{\rm s}\big({\bf \Psi}^{-1}\big)^{\rm a} + {\bf \Psi}^{\rm a} \big({\bf \Psi}^{-1}\big)^{\rm s} =0, \, \,  {\bf \Psi}^{\rm s}\big({\bf \Psi}^{-1}\big)^{\rm s} + {\bf \Psi}^{\rm a} \big({\bf \Psi}^{-1}\big)^{\rm a} ={\rm Id},
\end{split}
\end{equation}
where ${\bf \Psi}^{\rm T}$ stands for the transposed matrix.  It follows from \eqref{g} that $\pg$ is positive definite over $M^0$ if and only if  ${\bf \Psi}^{\rm s}(x)$ is  positive definite over $\mathring{\Pol}$.  

A special case when the almost complex structures $J$ and $I$  are both integrable and tamed by  $\Fo$  on $M^0$   is obtained by letting
\begin{equation}\label{toric-GK}
{\bf \Psi}(x) = {\rm Hess}(u(x)) + A
\end{equation}
where $u(x)$ is a smooth strictly convex  function on $\mathring{\Pol}$ and $A\in \gL^2\tor$ is a (constant) skew-symmetric  bilinear form on $\tor^*$, thus extending the K\"ahler setting which is obtained by letting $A=0$. 
 
Thus, \eqref{toric-GK}  gives rise to a $\T$-invariant generalized structure on $M^0$  which is compatible with the symplectic form $\Fo$  in the sense of \eqref{GK-symplectic}, as mentioned in the introduction. The following result  is established in \cite[Theorem 11]{boulanger} for $m=2$,  and in \cite[Theorems 4.9 \& 4.11]{W1} in general.

\begin{prop}\label{p:compactification} The complex structure $J_A$ defined on $M^0$ by \eqref{boulanger}  and  ${\bf \Psi}(x)$ of the form \eqref{toric-GK} for some smooth strictly convex function $u(x)$ on $\mathring{\Pol}$ and $A\in \gL^2\tor$ extends to a globally defined $\Fo$-compatible generalized K\"ahler structure $(\pg_A, I_A, J_A)$ on $M$ if and only if $u\in \mathcal{S}(\Pol, {\La})$.\end{prop}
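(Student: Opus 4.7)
The approach is to establish integrability and positive-definiteness on $M^0$ by direct computation, and then reduce the question of smooth extension across $M \setminus M^0$ to a local analysis near each face of the polytope. On $M^0$, integrability of $J_A$ reduces to the identity ${\bf \Psi}_{ij,k} = {\bf \Psi}_{ik,j}$, which holds because $A$ is constant and the partial derivatives of $u$ commute; the same reasoning applies to $I_A$, whose Gram matrix is ${\bf \Psi}^{\rm T} = \Hess u - A$. Positive definiteness of $g_A$ on $M^0$ follows from \eqref{g} together with the first identity in \eqref{linear-algebra}, namely ${\bf \Psi}({\bf \Psi}^{-1})^{\rm s}{\bf \Psi}^{\rm T} = {\bf \Psi}^{\rm s} = \Hess u$, combined with strict convexity of $u$. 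The only-if direction is then nearly immediate: the $d\mu_i\,d\mu_j$ block of $g_A$ equals $\Hess u$ since $A$ is skew, hence coincides with the one in Guillemin's formula \eqref{toricmetric}, and smooth extension of $g_A$ over $M$ forces the boundary conditions of Proposition \ref{p:boundary}, giving $u \in \mathcal{S}(\Pol, \La)$.

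For the converse, assume $u \in \mathcal{S}(\Pol, \La)$. It suffices to check that $J_A$, $I_A$ and $g_A$ extend smoothly across the preimage of each face, since the integrability conditions and GK identities then pass to the extension by continuity. Because the toric structure is locally a product near any face, the plan is to reduce to the vertex case: a neighborhood of a vertex $v \in \Pol$ is $\T$-equivariantly symplectomorphic to a neighborhood of $0 \in \C^m$ endowed with the standard diagonal $\T^m$-action and momentum image the positive quadrant. In this Delzant chart, Proposition \ref{p:boundary} ensures that the K\"ahler structure $(g_u, J_u)$ corresponding to $A = 0$ extends smoothly, and in particular that $S(\mu) := \Hess u(\mu)$ and $H(\mu) := S(\mu)^{-1}$ assemble into smooth tensors in a neighborhood of $v$ in $M$.

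The principal obstacle is controlling the inverse matrix $(S + A)^{-1}$ as one approaches the boundary of $\Pol$, where $S$ blows up in the directions normal to the face. The key algebraic input is that $S > 0$ together with $A$ skew implies $\det(S + A) \neq 0$ on $\mathring{\Pol}$, and the Neumann expansion
\begin{equation*}
(S + A)^{-1} = (\mathrm{Id} + S^{-1}A)^{-1} S^{-1} = S^{-1} - S^{-1} A S^{-1} + S^{-1} A S^{-1} A S^{-1} - \cdots
\end{equation*}
converges near the face because $S^{-1}$ has vanishing eigenvalues in the normal directions there. Every term in this series involves only powers of $S^{-1}$, which extends smoothly to $M$ by the K\"ahler base case, so the symmetric and skew parts $({\bf \Psi}^{-1})^{\rm s}$ and $({\bf \Psi}^{-1})^{\rm a}$ entering \eqref{g} and \eqref{boulanger} extend smoothly across the boundary strata. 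Combined with the smooth extension of the $d\mu_i\,d\mu_j$ block governed by $\Hess u$, this yields the required global tensors $g_A$, $J_A$ and $I_A$ on $M$. The careful verification of this Neumann expansion uniformly at all faces of arbitrary codimension is essentially the content of \cite[Theorems 4.9 \& 4.11]{W1}.
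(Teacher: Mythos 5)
The paper does not actually prove this proposition: it records it as established in \cite[Theorem 11]{boulanger} (for $m=2$) and \cite[Theorems 4.9 \& 4.11]{W1} in general, so your attempt is being measured against those references rather than against an argument in the text. Your overall architecture (integrability and positivity on $M^0$ by the computations already in the paper, reduction of the extension problem to a Delzant vertex chart, and comparison with the K\"ahler case $A=0$) is the right one, and your closing deferral to \cite{W1} is, in effect, what the paper itself does.

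However, the one piece of the extension argument you do supply in detail is flawed. Writing $S=\Hess(u)$, the Neumann series $(S+A)^{-1}=S^{-1}-S^{-1}AS^{-1}+\cdots$ converges only where the spectral radius of $S^{-1}A$ is less than $1$, and there is no reason for this to hold: near the interior of a facet $S^{-1}$ degenerates only in the normal direction while its tangential block stays of order one, and in the interior of $\Pol$ the eigenvalues of $S^{-1}$ can be large; for a fixed nonzero $A$ the series therefore diverges in general, and even where it converges, smoothness of the sum would require uniform control of all derivatives, which you do not address. The correct algebraic device is a \emph{finite} rational identity: with $H=S^{-1}$ one has $S+A=S(\mathrm{Id}+HA)$, hence $({\bf \Psi})^{-1}=(\mathrm{Id}+HA)^{-1}H$, and $\mathrm{Id}+HA$ is invertible wherever $H\ge 0$ (if $(\mathrm{Id}+AH)x=0$ then $x^{\rm T}Hx=-x^{\rm T}HAHx=0$, so $Hx=0$ and $x=-AHx=0$, and similarly for $\mathrm{Id}+HA$); since $H$ extends smoothly and positive semidefinitely over $\Pol$ by the boundary conditions of Proposition \ref{p:boundary}, so does $({\bf \Psi}^{-1})^{\rm s}$ and $({\bf \Psi}^{-1})^{\rm a}$. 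A second gap, in both directions of your argument, is the identification of ``smooth matrix entries in $(\mu,\ang)$'' with ``smooth tensors on $M$'': the coordinates $(\mu,\ang)$ are singular along $M\setminus M^0$, so one must verify the tensors in the honest complex charts near each vertex (as in the proof of Lemma \ref{l:complex-identification}), where the precise degeneration encoded in $\mathcal{S}(\Pol,\La)$ is exactly what makes the $A$-corrections harmless; this, rather than the series manipulation, is the substantive content of \cite[Theorems 4.9 \& 4.11]{W1}.
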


\begin{defn}\label{d:diagonal}
The GK structures $(\pg_A, J_A, I_A)$ obtained  from a  toric $\Fo$-compatible K\"ahler structure $(g_u, J_u)$ via Proposition~\ref{p:compactification}  are called \emph{generalized K\"ahler deformations of type $A$} of $(g_u, J_u)$.
\end{defn}

For a toric  generalized K\"ahler  structure $(\pg_A, J_A, I_A)$ as above, with corresponding $u\in \mathcal{S}(\Pol, {\La})$ and  $A\in \gL^2\tor$,  we introduce pluriharmonic functions on $(M^0, J)$ by 
\begin{equation}\label{tilde-y}
\tilde y_j(\mu) :=  u_{,j}(\mu) + \sum_{k=1}^m A_{kj} \mu_k, \qquad j=1, \dots, m.
\end{equation}
Indeed, it is easily checked  from \eqref{boulanger} that,  at each point,  $\{\d\tilde y_1, \ldots, \d\tilde y_m, \d \ang_1, \ldots, \d \ang_m\}$  is  the  dual  basis of  the basis of commuting real holomorphic vector fields $\{-JK_1, \ldots, -JK_m, K_1, \ldots, K_m\}$.  
As in the case of compatible toric K\"ahler structures,  we can think of  $\tilde y$ as abstract variables on $\tor$,  defined by the transformation
\begin{equation}\label{tilde-y-inv}
{\tilde y}  := (\nabla u)(x) + A(x),
\end{equation}
where $A$  is thought of as a linear map from $\tor^* \to \tor$. Using that $u$ is strictly convex and $A$ is skew-symmetric, one can show as in the Legendre transform \eqref{Legendre} that ${\tilde y}(x)$ is a diffeomorphism from $\mathring{\Pol}$ to $\R^m$. Similarly to the construction  in the K\"ahler case, we let $\tilde x_{u}$ be the pre-image of $0\in \tor$ under \eqref{tilde-y-inv} and $\tilde p_u$ the point in $M^0$ with $\mu(\tilde p_u)=\tilde x_{u}$ and $\ang (\tilde p_u)=0$ so that $\tilde z_j = e^{\tilde y_j + \sqrt{-1} \ang_j}$  are identified with  standard holomorphic coordinates  on the  orbit  $(\C^m)\cdot \tilde p_u =(M^0, J)$.   With this understood, we have the \textit{canonical biholomorphism} between two complex structures given by deformations of type $A$.

\begin{lemma}[Canonical biholomorphism between $(M,J_A)$ and $(M,J_{A_0})$]\label{l:complex-identification} Let $J_A$ and $J_{A_0}$ be  $\T$-invariant complex structures on $M$,  obtained by deformations of type $A$ of the K\"ahler structures $(g_{u},  J_{u})$ and $(g_{u_0}, J_{u_0})$ in $\mathcal{K}^{\T}(M, \Fo)$ with corresponding symplectic potentials $u, u_0 \in \mathcal{S}(\Pol, {\La})$, respectively. Then, the $\T$-equivariant diffeomorphism $\tilde \Phi$ on $M^0$,  which sends the complex coordinates $(\tilde z_1, \ldots, \tilde z_m)$ of $J_A$ to the respective complex coordinates $(\tilde z_1^0, \ldots, \tilde z_m^0)$ of $J_{A_0}$, extends to a $(\C^*)^m$-equivariant biholomorphism  from $(M, J_A)$ to $(M, J_{A_0})$. In particular, independent of $u$ and $A$, $(M, J_A, \T)$ is isomorphic, as a complex toric variety, to $(M, J_c, \T)$, where $J_c$ is the canonical complex structure in $\mathcal{K}^{\T}(M, \Fo)$.
\end{lemma}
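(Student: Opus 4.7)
My plan is to first construct $\tilde\Phi$ on the open dense orbit $M^0 \cong \mathring{\Pol} \times \T$, then extend across the toric divisor using the boundary characterization of symplectic potentials. On $M^0$, we must set $\ang(\tilde\Phi(p)) = \ang(p)$ (since both structures share the canonical angular coordinates by our normalization) and $\mu(\tilde\Phi(p)) = \tilde y_{A_0}^{-1}(\tilde y_A(\mu(p)))$, where $\tilde y_A = \nabla u + A$ and $\tilde y_{A_0} = \nabla u_0 + A_0$ are the diffeomorphisms $\mathring{\Pol} \to \R^m$ from~\eqref{tilde-y-inv}. By construction $\tilde\Phi^*\tilde z_j^0 = \tilde z_j$, making $\tilde\Phi$ a $\T$-equivariant biholomorphism $(M^0, J_A) \to (M^0, J_{A_0})$, and $(\C^*)^m$-equivariant because the two complexified torus actions are generated by holomorphic scaling of the respective coordinates $\tilde z_j$ and $\tilde z_j^0$.

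Composing with the inverse map associated to $(u_0, A_0) \to (u_c, 0)$ reduces the extension question to the case where $J_{A_0}$ is the canonical complex structure $J_c$ and the target smooth structure on $M$ is that of Delzant's construction. Near any vertex $v$ of $\Pol$, the $m$ adjacent facet normals $v_1^v, \ldots, v_m^v$ form a $\Z$-basis of $\Lambda$ by the Delzant condition and yield local affine functions $(L_1^v, \ldots, L_m^v)$ vanishing at $v$, together with $J_c$-holomorphic coordinates $z_j^{c,v}$ in which $|z_j^{c,v}|^2 = L_j^v \, f_j^v$ for smooth positive $\T$-invariant functions $f_j^v$, compactifying the torus orbit to $\C^m$.

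The crux of the argument is the identity, valid on $M^0$ in the basis adapted to $v$,
\begin{equation*}
\tilde\Phi^*(z_j^{c,v}) \, = \, \tilde z_j^v \, = \, z_j^{c,v}\cdot \exp\!\bigl( h_{A,j}^v(\mu)\bigr),\qquad h_A(x) := \nabla(u - u_c)(x) + Ax,
\end{equation*}
which follows from $\tilde y_A - y^c = h_A$ together with $\tilde\Phi$ preserving $\ang$. By Proposition~\ref{p:boundary} the difference $u - u_c$ is smooth on the closed polytope, so $h_A$ is smooth on $\Pol$, and therefore $e^{h_{A,j}^v(\mu)}$ is a smooth nowhere vanishing $\T$-invariant function on all of $M$. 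Consequently $\tilde\Phi^* z_j^{c,v}$ extends smoothly across $\{z_j^{c,v} = 0\}$ as the product of $z_j^{c,v}$ with a nowhere vanishing smooth factor, mapping each stratum of the toric divisor to itself and with nondegenerate Jacobian at every boundary point.

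The extension is holomorphic by continuity of the identity $\tilde\Phi_* J_A = J_c$ from the dense $M^0$, and its inverse is built by the symmetric construction, giving the required $(\C^*)^m$-equivariant biholomorphism $M \to M$. The ``in particular'' part of the statement is then obtained by taking $(u_0, A_0) = (u_c, 0)$. I expect the main technical subtlety to lie precisely in establishing the boxed identity and its smooth extension across the Delzant compactification, which depends essentially on the regularity statement in Proposition~\ref{p:boundary} and the explicit form of the adapted coordinates $z_j^{c,v}$ near each vertex.
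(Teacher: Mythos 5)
Your construction is correct in substance and rests on the same two pillars as the paper's proof: a vertex-by-vertex analysis of the Delzant compactification, and the fact from Proposition~\ref{p:boundary} that $u-u_c$ is smooth up to $\partial\Pol$, so that the discrepancy $h_A=\nabla(u-u_c)+Ax$ between the $J_A$- and $J_c$-pluriharmonic coordinates extends smoothly over the closed polytope. The execution differs in a meaningful way, though. The paper first upgrades the orbit coordinates $\tilde z^v$ into a genuine $(\C^*)^m$-equivariant \emph{holomorphic atlas} for $(M,J_A)$ (this is where the $\tfrac{1}{2}\log L_k$ singularity of $u_{,k}$ is used to show $\tilde z^v_k\to 0$ at the facet $L_k=0$), and then observes that in the charts $\psi_v,\psi^0_v$ the map $\tilde\Phi$ is literally multiplication by the constant $e^{\lambda_v(A_0-A)}$, after which extension, holomorphy, equivariance and invertibility are all immediate. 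You instead keep only the canonical charts $z^{c,v}$ and write $\tilde\Phi^*z^{c,v}_j=z^{c,v}_j\,e^{h^v_{A,j}(\mu)}$ with a smooth, positive, but non-holomorphic correction factor, recovering holomorphy a posteriori from density of $M^0$. Your boxed identity is correct, and this route buys a shorter argument that does not re-derive a holomorphic atlas for $J_A$.

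The one step that does not close as written is the inverse/Jacobian step. Nondegeneracy of the Jacobian at boundary points is not automatic from $w_j=z_je^{h_j}$: the differential is $e^{h_j}(\d z_j+z_j\,\d h_j)$ and the off-diagonal terms $z_j\,\d h_j$ require an argument (e.g.\ a stratum-by-stratum induction). More importantly, the ``symmetric construction'' of $\tilde\Phi^{-1}$ is not actually symmetric in your framework: expressing $\tilde\Phi^{-1}$ in the \emph{same} charts $z^{c,v}$ requires knowing that $\mu\circ\tilde\Phi^{-1}$, i.e.\ $(\nabla u+A)^{-1}\circ\nabla u_c$, is smooth up to $\partial\Pol$ --- an implicit-function-theorem statement you have not supplied --- while doing it in charts adapted to $J_A$ is exactly the atlas construction you set out to avoid. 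This is repairable by either route, but as written you have only produced a smooth extension of $\tilde\Phi$ itself, not of its inverse.
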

\begin{proof} Given some $J=J_A$ as in the statement, we shall build a holomorphic $(\C^*)^{m}$-equivariant atlas on $(M, J)$ consisting of  complex charts $\psi_v : M_v \to \C^m$ associated to each vertex $v$ of $\Pol$. To this end,  we suppose that the chosen basis  $e$ of $\tor$, defining a dual basis of $\tor^*$ and the holomorphic coordinates $(\tilde y_1, \ldots, \tilde y_m)$ via \eqref{tilde-y} on $(M^0, J)$,   is a $\Z$-basis of $\Lambda$. By Delzant theory~\cite{delzant}, each vertex $v\in \Pol$ corresponds to a point $p_v\in M$ fixed by the action of $\T$. We change the momentum map $\mu^v  := \mu-v$ by a translation,   so that  $\mu(p_v)=0\in \tor^*$. Furthermore, we change  the  initial $\Z$-basis of $\tor$ with  a $\Z$-basis $e_v$ consisting of the primitive normals of the labels $L_j\in {\La}$  vanishing at $v$,  with  corresponding  transition matrix of bases $N_v=(n_{ij}) \in {\bf GL}(m, \Z)$.  Using the new basis $e_v$  and the modified momentum map $\mu^v$,  we define via \eqref{tilde-y},  new $J$-holomorphic coordinates  $(\tilde z_1^v, \ldots, \tilde z_m^v)$ on $(M^0, J)$  by  $\tilde z^v_k= e^{(\tilde y^v_k + \sqrt{-1} \ang^v_k)}$. It follows that on $(M^0, J) \cong (\C^*)^m\cdot \tilde p_u$, the coordinates  $\tilde z$ and $\tilde z^v$ are related by
\begin{align}\label{atlas}
 \tilde z^v = e^{-\lambda_v(A) } \tilde z^{N_v}, 
 \end{align}
which is an abbreviation  to the map
\[(\tilde z^v)_k = e^{-(N_vA(v))_j}(\tilde z_1)^{n_{k1}} (\tilde z_2)^{n_{k2}} \cdots (\tilde z_m)^{n_{km}}, \, k=1, \ldots m.\]
We now show  that    $(\tilde z_1^v, \ldots, \tilde z_m^v)$ can be extended  from $(M^0, J)$  to define a $(\C^*)^m$-equivariant  holomorphic chart on $(M, J)$ centered at  $p_v$. To this end, notice that with respect to the basis $e_v$ and with  the normalization for the momentum map $\mu^v$, the labels  $L_k\in {\La}$ vanishing at $v=\mu(p_v)=0$ are $L_k(\mu^v)= \mu^v_k, k=1, \ldots m$. We will show that  $(\tilde z_1^v, \ldots, \tilde z_m^v)$  extend at any point in the pre-image $M^0_{v}$ by $\mu^v$ of the union of $v$ and the interiors of all faces of $\Pol$ containing the vertex $v$. Let us  take a point $p_*$  in the pre-image of the interior $\mathring{\Sigma}_k$ of the facet  defined by $\mu^v_k=0$. (The argument for a face of higher co-dimension  is similar.)  Letting $x_* =\mu^v(p_*)$,  by  the second boundary condition for $u$ in Proposition~\ref{p:boundary}, we have that  $u_{,k}(\mu^v)= \frac{1}{2} \log(\mu^v_k) + \varphi(\mu^v)$ for a smooth function $\varphi(x)$  defined in a  closed ball  $B_*$ around $x_*$; it thus follows that on $(\mu^v)^{-1}(B_*) \cap M^0$,  we have  $|\tilde z^v_k|^2 = (\mu^v_k) e^{\varphi(\mu^v)}$, showing that $\tilde z^v_k$ tends to zero for any sequence of points in $M^0$ which converges to  $p_*$.  The same boundary condition also ensures that  the functions $\tilde y^v_r(x)$  with $r\neq k$ are extendable as smooth functions  on  $\mathring{\Sigma}_k$.  Using the first boundary condition  of Proposition~\ref{p:boundary} and the definition \cite{delzant}  of the canonical angular coordinates $\ang_r$  (with $r\neq k$)  on $\mu_{v}^{-1}(\mathring{\Sigma}_k)$, it follows that 
$(\tilde z_1, \ldots, \tilde z_{k-1}, 0, \tilde z_{k+1}, \ldots, \tilde z_m)$  gives rise to  a $(\C^*)^m$-equivariant coordinate system on  $(\mu^v)^{-1}(\mathring{\Sigma}_k)$.  Similar arguments on any face containing $v$  yield that   $(\tilde z_1^v, \ldots, \tilde z_m^v)$  defines   a global  $(\C^*)^m$-equivariant  chart $\psi_v: M_{v} \to \C^m$.  Considering such charts for all vertices of $\Pol$ gives rise to a holomorphic atlas on $(M, J)$.

We now let $\psi^0_v : (M_v, J_0) \to \C_m$ be the corresponding chart at $p_v$  for the  complex structure $J_0=J_{A_0}$ (obtained by a deformation of type $A$ from  $u_0\in \mathcal{S}(\Pol, {\La})$ and $A_0 \in \gL^{2}\tor$).  Using \eqref{atlas}, it follows that in the charts $\psi_v, \psi_v^0$, the map $\Phi: (M^0, J)\to (M^0, J_0)$ sending $(\tilde z_1, \ldots, \tilde z_m)$ to $(\tilde z_1^0, \ldots, \tilde z_m^0)$  becomes 
\[  \tilde z^v \mapsto e^{\lambda_v(A_0-A)} (\tilde z^0)^v, \]
which is well-defined on  the whole $\C^m$. \end{proof}

\subsubsection{Toric generalized K\"ahler deformations of type $B$}\label{s:toricGK-B}  

Following \cite{W1}, let  $(\pg_A, I_A, J_A)$ be a generalized K\"ahler deformation of type $A$ on $(M, \Fo, \T)$, defined via \eqref{boulanger} and \eqref{toric-GK} by $u\in \mathcal{S}(\Pol, {\La})$ and $A\in \gL^2\tor$, and take another element $B\in \gL^2\tor$.  Then the $2$-form 
\begin{equation}\label{F-B}
\Fi := \Fo + \langle \d \mu, B, \d \mu\rangle   =   \sum_{j=1}^m \d\mu_i \wedge \d\ang_i  +  \sum_{i,j=1}^m B_{ij} \d\mu_i \wedge \d\mu_j 
         \end{equation}
is $\T$-invariant and symplectic on $M$,   with moment map  $\mu$ and associated Delzant polytope $(\Delta, {\La})$. In particular, $(M, \Fo, \T)$ and $(M, F, \T)$ are isomorphic as symplectic toric manifolds, but  the momentum-angular coordinates of $\Fi$ are $(\mu, \ang_F:=\ang - B(\mu))$.

It is observed in \cite{W1} that for any $B\in \gL^2\tor$, the almost complex structure $I_{A,B}:= -(\Fi)^{-1}(J_A)^* \Fi$  is integrable.  Indeed, with respect to the frame $(\frac{\partial}{\partial \ang_i}, \frac{\partial}{\partial \mu_j})$ on $TM^0$, $J_A$ and $I_{A,B}$ are represented by matrices
\begin{equation}\label{e:IB}
J_A \sim \left( \begin{array}{cc} 
 0 & {\bf \Psi}^{\rm T} \\
 -\big({\bf \Psi}^{\rm T}\big)^{-1} & 0  \end{array} \right), \qquad
 \, \, I _{A,B}\sim \left( \begin{array}{cc} 
 2B{\bf \Psi}^{-1} &  {\bf \Psi}  + 4B{\bf \Psi}^{-1} B \\
-{\bf \Psi}^{-1} & -2{\bf \Psi}^{-1}B \end{array} \right),
\end{equation}
 where ${\bf \Psi}$ is given by \eqref{toric-GK}. One can then check from  the above representations that 
  \begin{equation}\label{pluriharmonic-I}
 \bar y := \nabla u  - A(\mu), \qquad \bar \ang := \ang- 2 B(\mu)= \ang_F  -B(\mu)
 \end{equation}
 are pluriharmonic coordinates with respect to $I_{A,B}$  and hence  $I_{A,B}$ is integrable. 
 
Suppose now that  $J_A$  is tamed by $F$.  This  gives rise to a $\T$-invariant generalized K\"ahler structure $(\pg_{A,B}, I_{A,B}, J_{A})$,  where 
the induced Riemannian metric  $\pg_{A,B}:= -\big(FJ_A\big)^{\rm s}$ is represented in the frame $(\frac{\partial}{\partial \ang_i}, \frac{\partial}{\partial \mu_j})$ on $TM^0$  by the Gram matrix
\begin{equation}\label{e:gB}
\begin{split}
\pg_{A,B}  \sim  & \left( \begin{array}{cc} 
 \big({\bf \Psi}^{-1}\big)^{\rm s} & {\bf \Psi}^{-1}B   \\
-B\big({\bf \Psi}^{\rm T}\big)^{-1}  & {\bf \Psi}^{\rm s}  \end{array} \right)  
=\left( \begin{array}{cc} 
 {\bf \Psi}^{-1}& 0 \\
 0 & {\rm Id}  \end{array} \right) \left( \begin{array}{cc} 
 {\bf \Psi}^{\rm s} & B \\
- B & {\bf \Psi}^{\rm s}  \end{array} \right) \left( \begin{array}{cc} 
 \big({\bf \Psi}^{\rm T}\big)^{-1} &0 \\
0 & {\rm Id} \end{array} \right).
\end{split}
\end{equation}
The equality of matrices in the above formula is deduced by using  \eqref{linear-algebra}. It follows that  $F$ tames $J_A$ on $M^0$  if and only if 
the Hermitian matrix 
\begin{align}\label{positivity}
 {\rm Hess}(u) + \sqrt{-1}B>0
 \end{align}
  is positive definite at any point of  $\mathring{\Pol}$.  A similar condition imposed on the interior of each face of $\Pol$ by virtue of Proposition~\ref{p:boundary} guarantees that $F$ tames $J$ on $M$. Notice that this is independent of the transformation of type $A$ used to obtain $J$.

\begin{defn}\label{d:typeB} The generalized K\"ahler structure $(\pg_{A,B}, I_{A,B}, J_A)$ on $M$ described above will be  called a  {\it generalized K\"ahler deformation of type $B$} of  $(\pg_A, I_A, J_A)$.
\end{defn}

By the results in  \cite[Sect. 4]{W2},  we have the following generalization of the description of toric K\"ahler metrics.

\begin{prop}\label{p:toric-GK}\cite{W2} Any $\T$-invariant generalized K\"ahler structure of symplectic type on $M$,  compatible with  a  $\T$-invariant symplectic  form $F$ on $M$ with associated  labelled Delzant polytope $(\Pol,  {\La})$,  is $\T$-equivariantly isomorphic to a generalized K\"ahler structure $(\pg_{A,B}, I_{A,B}, J_A)$,   obtained from a toric K\"ahler metric $(g_u, J_u, \Fo)$ corresponding  to a symplectic potential $u\in {\mathcal S}(\Pol, {\La})$, by generalized K\"ahler transformations of type $A$ and $B$.
\end{prop}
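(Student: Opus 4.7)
The plan is to exploit the rigidity of toric symplectic and complex structures in tandem: first to normalize the symplectic form $F$, then to normalize the complex structure $J$ via Boulanger-type coordinates, and finally to extract the Hessian-plus-skew decomposition from integrability.

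First, I would apply Delzant's theorem together with equivariant Moser's trick. Since $(M, F, \T)$ and the model $(M, \Fi, \T)$ with $\Fi = \Fo + \langle \d\mu, B, \d\mu\rangle$ share the same labelled Delzant polytope $(\Pol, \La)$ and the difference $\Fi - \Fo$ is exact, a $\T$-equivariant diffeomorphism $\psi$ with $\psi^* F = \Fi$ exists for any fixed $B \in \gL^2\tor$. Replacing the GK data by its $\psi$-pullback, I may assume $F = \Fi$; the parameter $B$ is at this stage a free choice, which I carry through to match the target form.

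Second, on $M^0 = \mu^{-1}(\mathring{\Pol})$ the complex torus $\T_\C$ acts holomorphically and simply transitively (since $J$ is $\T$-invariant and integrable). Choosing $\T$-invariant pluriharmonic functions $(\tilde y_1, \ldots, \tilde y_m)$ complementary to the canonical angles $(\ang_1, \ldots, \ang_m)$ of $\Fo$, the Jacobian $\mathbf{\Psi}(\mu) := \partial\tilde y/\partial\mu$ realises $J$ in the Boulanger form \eqref{boulanger} in the $\Fo$-adapted canonical coordinates. Taming of $J$ by $\Fi$ yields $\mathbf{\Psi}^{\rm s} > 0$, and the induced almost complex structure $I = -\Fi^{-1}J^*\Fi$ takes the matrix form \eqref{e:IB}.

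Third, I would show that integrability of both $J$ and $I$ forces the splitting $\mathbf{\Psi} = \Hess(u) + A$ with $A \in \gL^2\tor$ constant. Integrability of $J$ in the Boulanger frame gives $\mathbf{\Psi}_{ij,k} = \mathbf{\Psi}_{ik,j}$. Using the matrix expression \eqref{e:IB} of $I = I_{A,B}$ and the identities \eqref{linear-algebra}, the integrability of $I$ reduces to a transposed differential identity on $\mathbf{\Psi}$ that is independent of $B$. Combining the two, the symmetric part $\mathbf{\Psi}^{\rm s}$ satisfies $\partial_c \mathbf{\Psi}^{\rm s}_{ab} = \partial_b \mathbf{\Psi}^{\rm s}_{ac}$; two applications of the Poincar\'e lemma on the contractible domain $\mathring{\Pol}$ then produce a smooth strictly convex $u$ with $\mathbf{\Psi}^{\rm s} = \Hess(u)$. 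The skew part $\mathbf{\Psi}^{\rm a}$ satisfies the same identity, but by antisymmetry the corresponding primitive solves the Euclidean Killing equation, forcing $\mathbf{\Psi}^{\rm a} = A \in \gL^2\tor$ constant. Finally, Proposition \ref{p:compactification} translates smooth extension of the structure across $\mu^{-1}(\partial\Pol)$ into $u \in \mathcal{S}(\Pol, \La)$, identifying the original GK structure with the claimed type-$(A,B)$ deformation of $(g_u, J_u, \Fo)$.

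The main obstacle I anticipate is Step~3: verifying that the integrability of $I$ in the $\Fo$-adapted frame really does reduce to a transposed Boulanger condition on $\mathbf{\Psi}$ independently of $B$. This is an algebraic computation with the Nijenhuis tensor of the matrix \eqref{e:IB}, simplified by the identities \eqref{linear-algebra}, and is the technical heart of the result. Once this reduction is secured, the Poincar\'e-lemma splitting and the Delzant boundary analysis are routine consequences of the earlier setup.
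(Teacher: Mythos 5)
The paper itself offers no proof of this proposition --- it is quoted from \cite[Sect.~4]{W2} --- so your argument has to stand on its own, and as written it does not. Beyond the step you flag yourself, there is a more basic gap in the normalization of Steps 1--2. Equivariant Moser lets you arrange \emph{either} that $F=\Fo+\langle \d\mu,B,\d\mu\rangle$ for some prescribed constant $B$, \emph{or} that the canonical angles $\ang$ of $\Fo$ are the arguments of $J$-holomorphic coordinates (so that $J$ takes the zero-diagonal-block Boulanger form \eqref{boulanger}); arranging both simultaneously is exactly the normal form you are trying to establish. A general $\T$-invariant complex structure tamed by $F$ need not have vanishing diagonal blocks in the frame $(\partial_{\ang},\partial_{\mu})$ --- compare $I_{A,B}$ in \eqref{e:IB} --- and the discrepancy between the Darboux angles of $F$ and the angular coordinates determined by $J$ is an a priori arbitrary $\tor$-valued function $V(\mu)$. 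Part of the content of the theorem is that the skew-symmetric part of the Jacobian of $V$ is a \emph{constant} matrix (this constant is, up to a factor, the $B$ of the statement), and that constancy must be extracted from the integrability of $I$; it cannot be imposed ``as a free choice.'' Indeed, if $B$ were genuinely free you could take $B=0$ and conclude that every such structure is a pure type-$A$ deformation, contradicting Proposition~\ref{p:toric-Poisson}, which exhibits $A+\sqrt{-1}B$ as the coefficient matrix of the holomorphic Poisson tensor $\sigma_J$ in the $\T_{\C}$-invariant frame $(K_i-\sqrt{-1}JK_i)\wedge(K_j-\sqrt{-1}JK_j)$, hence as an invariant of the $\T$-equivariant isomorphism class.

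This first gap compounds the one you acknowledge in Step 3: before the constancy of $B$ is known, the integrability of $I=-F^{-1}J^*F$ couples derivatives of the variable horizontal part $\tau=\sum_{i,j}\tau_{ij}(\mu)\,\d\mu_i\wedge\d\mu_j$ of $F$ with derivatives of $\mathbf{\Psi}$, so the clean ``transposed Boulanger condition independent of $B$'' is not what the Nijenhuis computation produces. The workable setup is to normalize $J$ first (write $F=\sum_j\d\mu_j\wedge\d\tilde\ang_j+\tau(\mu)$ in $J$-adapted coordinates) and show that $I$-integrability forces \emph{both} $\mathbf{\Psi}^{\rm a}$ and $\tau$ to be constant; granting that, your Poincar\'e-lemma and Killing-equation algebra for the splitting $\mathbf{\Psi}=\Hess(u)+A$ is correct, as is the appeal to Proposition~\ref{p:compactification} for the boundary conditions $u\in\mathcal{S}(\Pol,\La)$. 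A cleaner route to both constancies at once is available: $\sigma_J$ is holomorphic (Hitchin) and $\T_{\C}$-invariant, so its coefficients in the invariant frame of $\gL^2T^{1,0}$ over the open orbit are constants; a pointwise linear-algebra identification of those coefficients with $\mathbf{\Psi}^{\rm a}+\sqrt{-1}\tau$, in the spirit of Proposition~\ref{p:toric-Poisson}, then delivers the constancy of $A$ and $B$ in one stroke and reduces your Step 3 to the symmetric-part argument alone.
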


\begin{convention}\label{convention}
In view of  the above result, and  to ease the notation,  we shall  omit  the indices $A,B$ and  implicitly assume (without loss of generality) that a toric generalized K\"ahler structure  $(g, I, J)$  compatible with a $\T$-invariant symplectic form $F$  on $M$ and corresponding  Delzant polytope  $(\Pol, {\La})$   is  {\it always} obtained from a $\T$-invariant K\"ahler structure  $(\kg_u, \kJ_u)$,   via generalized K\"ahler deformations of type $A$ and $B$ for respective elements $A,B \in \gL^2\tor$,  where  $u\in \mathcal{S}(\Pol, {\La})$, $\Fo$ is  a symplectic form  associated to  $(\Pol, {\La})$ via the Delzant construction,  and $(\kg_u, \kJ_u)$ is a $\Fo$-compatible K\"ahler metric with symplectic potential $u$ and canonical momentum-angular coordinates $(\mu,  \ang^c)$ on $(M^0, \Fo)$. 
\end{convention}

The computations  \eqref{tilde-y-inv}  and \eqref{pluriharmonic-I}  of the corresponding pluriharmonic coordinates of $J$ and $I$ underline the general symmetry of the construction:
\begin{equation}\label{symmetry-AB}
 (u, A, B) \longleftrightarrow (u, -A, -B)
 \end{equation}
 which, geometrically, corresponds to switching the roles of $J$ and $I$ of the corresponding toric GK structure $(g, I, J)$. Indeed,  this becomes apparent if we express the construction in  the momentum/angular coordinates $(\mu, \ang_F)$ of $F$. Then,  the corresponding K\"ahler structures $\kom_{\kJ_u}$ and $\kom_{\kI_u}$ are  linked to the common symplectic form $F$ via the relations
 \begin{equation}\label{symmetry}
 \begin{split}
 \kom_{\kJ_{u}} &= \sum_{j=1}^m \d\mu_j \wedge \d\ang_j = F -\sum_{i,j=1}^m B_{ij} \d \mu_i \wedge \d \mu_j\\
 \kom_{\kI_u} &= \sum_{j=1}^m \d \mu_j \wedge \d \bar \ang_j = F + \sum_{i,j=1}^m B_{ij} \d\mu_i \wedge \d \mu_j. \\
 F&= \tfrac{1}{2}(\kom_{\kI_{u}} + \kom_{\kJ_{u}}),
 \end{split}
 \end{equation}
where we recall $\bar \theta$ are defined in \eqref{pluriharmonic-I}.

\subsubsection{The case \texorpdfstring{$A=0$}{A=0}}\label{s:A=0}

When $A\neq 0$, the integrable almost complex structures $J=J_{A}$ and $\kJ_u$ associated respectively to the generalized K\"ahler structure $(\pg, I, J)$ and the K\"ahler structure $(\kg_u, \kJ_u)$ are different (even though  they are $\T$-equivariantly biholomorphic by virtue of Lemma~\ref{l:complex-identification}). In the special case $A=0$, we have $\kJ_u= J$ and $\kI_{u}=I$. We shall later make use of this simplification, so we recast below some facts specific to this case.  

First, letting
\begin{equation}\label{pi-B}
\Pi:=-\sum_{i,j=1}^m B_{ij} \frac{\partial}{\partial \ang_i}\wedge \frac{\partial}{\partial \ang_j} \end{equation}
denote the real Poisson structure on $M$ determined by $B\in \gL^2\tor$, the formula  \eqref{F-B} yields the following basic relation between  $\kom$ and $\pom$ on $(M, J)$:
\begin{equation}\label{non-linear-link}
F = (\kom^{-1} + \Pi)^{-1}, \qquad \pom = F^{1,1}.
\end{equation}
It will be also convenient in this case to express the geometry in terms of the $J$-pluriharmonic coordinates $(y_j, \ang_j)$ obtained via the Legendre transform of $u$ (see \eqref{Legendre} and Proposition~\ref{p:symplectic-to-complex}).  Setting  $y_j = u_{, j}(\mu)$ and $\phi(y)+ u(\mu)=\sum_{j=1}^m \mu_j y_j$, we have on $(M^{0}, J)$ (see \eqref{Kahler-toric-potential}):
\[\kom =  \d \d^c \phi = \sum_{i,j=1}^m \Big({\rm Hess}(\phi(y)) \Big)_{ij} \d y_i \wedge d\ang_j,\]
whereas the symplectic $2$-form $F$ and the fundamental $2$-form $\pom= F^{1,1}$ are given by:
\begin{equation}\label{omega-beta}
\begin{split}
F &=  \sum_{i,j=1}^m \Big(\phi_{, ij} \d y_i \wedge \d\ang_j  + \Big[\big({\rm Hess}(\phi)\big)B \big({\rm Hess}(\phi)\big)\Big]_{ij} \d y_i \wedge \d y_j \Big) \\
              &= \kom + \kom \Pi \kom; \\
\pom &= \sum_{i,j=1}^m \Big(\phi_{, ij}  \d y_i \wedge \d\ang_j  + \frac{1}{2}\Big[\big({\rm Hess}(\phi)\big) B \big({\rm Hess}(\phi)\big)\Big]_{ij} (\d y_i \wedge \d y_j + \d\ang_i \wedge \d \ang_j)\Big) \\
            &= \kom + \kom \left(\Pi^{1,1}\right) \kom.
\end{split}
\end{equation}
In this case,  $(y_j, \bar\ang_j)$ are pluriharmonic coordinates of $I$ (see \eqref{pluriharmonic-I}), so similar formulae hold with respect to $I$ (in which one should replace $B$ by $-B$ and $\ang_j$ by ${\bar \ang}_j$). We also notice that 
\begin{equation}\label{IJ-potential}
\d \d^c_J \phi= \kom_J, \qquad \d \d^c_I \phi = \kom_I,\end{equation}
so, by \eqref{symmetry}, we get
\begin{equation}\label{F-potential}
F= \tfrac{1}{2}(\d\d^c_J \phi + \d\d^c_I \phi).
\end{equation}

\subsubsection{The holomorphic Poisson structure of a toric generalized K\"ahler structure}

The next result gives a geometric meaning of the parameters $A, B\in \gL^2 \tor$.  In particular, they completely determine the associated holomorphic Poisson structures.

\begin{prop}\label{p:toric-Poisson} Let $(\pg, I, J)$ be a toric GK structure associated to the data $(u, A, B$).  Then the corresponding holomorphic Poisson structure on $(M, J)$ given by \eqref{GK-Poisson} is 
\[
\sigma_J=  2\sum_{i,j=1}^m \big(A_{ij} + \sqrt{-1}B_{ij} \big)\big(K_i -\sqrt{-1} JK_i\big) \wedge \big(K_j - \sqrt{-1}JK_j\big),
\]
where $K_1, \ldots, K_m$ are the fundamental vector fields associated to a basis of $\tor$,  and $A_{ij}$ and $B_{ij}$ are the corresponding components of $A$ and $B$ in this basis.
\end{prop}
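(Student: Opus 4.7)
The plan is to split the argument into two parts: first use $\T$-invariance and holomorphicity to show that $\gs_J$ has the form $\sum c_{ij}\, Z_i \wedge Z_j$ with $Z_j := K_j - \sqrt{-1}\, JK_j$ and \emph{constant} coefficients $c_{ij}$; second, compute $\gs$ explicitly in toric coordinates and read off the $c_{ij}$.

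For the first step, $\gs_J$ is a holomorphic section of $\gL^2 T^{1,0}(M, J)$, by the general discussion following \eqref{GK-Poisson}, and $\T$-invariant since $(g, I, J)$ is. By Lemma~\ref{l:complex-identification}, $(M, J, \T_{\C})$ is a complex toric variety with open dense orbit $M^0 = (\C^*)^m \cdot \tilde{p}_u$, on which the commuting holomorphic $(1,0)$-vector fields $Z_1, \ldots, Z_m$ form a $\T_{\C}$-invariant frame of $T^{1,0}M$. Writing $\gs_J|_{M^0} = \sum_{i,j} c_{ij}(\tilde z)\, Z_i \wedge Z_j$, the coefficients $c_{ij}$ are holomorphic (since $\gs_J$ and the $Z_j$ are) and $\T$-invariant, hence depend only on $|\tilde z_k|$. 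Any such function on $(\C^*)^m$ extending holomorphically across the toric compactification $M$ is constant, so the $c_{ij}$ are complex constants.

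For the second step, I will first derive the compact formula $\gs = 2(J - I)F^{-1}$ from $F = -2g(I+J)^{-1}$ combined with the identity $(J-I)(I+J) = -[I,J]$. Using \eqref{F-B}, $F$ is represented in the frame $(\dd{\ang_i}, \dd{\mu_j})$ of $TM^0$ by $\bigl(\begin{smallmatrix} 0 & -{\rm Id}\\ {\rm Id} & 2B\end{smallmatrix}\bigr)$, whose inverse as a bivector is $\bigl(\begin{smallmatrix} 2B & {\rm Id}\\ -{\rm Id} & 0\end{smallmatrix}\bigr)$. Combining this with \eqref{e:IB} and the identity $({\bf \Psi}^{-1})^{\rm a} = -{\bf \Psi}^{-1} A\, {\bf \Psi}^{-T}$ from \eqref{linear-algebra}, a block matrix computation yields
\[
\gs \sim \begin{pmatrix} 4A & -4B{\bf \Psi}^{-1} \\ -4{\bf \Psi}^{-T}B & -4{\bf \Psi}^{-1}A{\bf \Psi}^{-T}\end{pmatrix}
\]
on $M^0$. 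Next, I will express $\dd{\ang_i}$ and $\dd{\mu_l}$ in the frame $\{Z_k, \bar{Z}_k\}$ using $JK_i = -\sum_k ({\bf \Psi}^{-1})_{ik}\dd{\mu_k}$, which is read off from \eqref{e:IB}, namely
\[
\dd{\ang_i}=\tfrac{1}{2}(Z_i+\bar{Z}_i),\qquad \dd{\mu_l}=-\tfrac{\sqrt{-1}}{2}\sum_k {\bf \Psi}_{lk}(Z_k-\bar{Z}_k),
\]
substitute, and collect the $Z \wedge Z$ terms to obtain
\[
\gs^{2,0}_J = \sum_{i,j}\bigl[\tfrac{1}{2}A_{ij} + \sqrt{-1}\,B_{ij} + \tfrac{1}{2}({\bf \Psi}^T {\bf \Psi}^{-1} A\, {\bf \Psi}^{-T}{\bf \Psi})_{ij}\bigr]\, Z_i \wedge Z_j.
\]
The identity ${\bf \Psi}^T {\bf \Psi}^{-1} A\, {\bf \Psi}^{-T}{\bf \Psi} = A$, which follows from $A = \tfrac{1}{2}({\bf \Psi}-{\bf \Psi}^T)$ and ${\bf \Psi}^T {\bf \Psi}^{-T} = {\rm Id}$, collapses the two $A$-contributions into $A_{ij}\, Z_i \wedge Z_j$, and $\gs_J = 2\gs^{2,0}_J$ is precisely the claimed formula.

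The main technical obstacle will be organizing the block matrix computation and verifying the cubic identity ${\bf \Psi}^T {\bf \Psi}^{-1} A\, {\bf \Psi}^{-T}{\bf \Psi} = A$, which collapses an apparently complicated expression in ${\bf \Psi}$ back to $A$ and is what ensures that $A$ and $B$ enter the final coefficient symmetrically as $A_{ij}+\sqrt{-1}B_{ij}$.
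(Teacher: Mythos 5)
Your proposal is correct and, at its core, takes the same route as the paper: a direct matrix computation in the momentum--angle frame using the representations \eqref{e:IB} and the identities \eqref{linear-algebra}, followed by a change to the frame $\{Z_i,\bar Z_i\}$ with $Z_i=K_i-\sqrt{-1}JK_i$. The one genuine difference is that the paper computes $\gs=[I,J]g^{-1}$ by writing out the inverse metric $g^{-1}$ explicitly as a block matrix (the $\X,\Y$ expressions), whereas you use the identity $\gs=2(J-I)F^{-1}$, which replaces $g^{-1}$ by the far simpler constant-coefficient matrix $F^{-1}$ read off from \eqref{F-B}; this noticeably shortens the ``tedious'' middle step, and your closing identity ${\bf \Psi}^{\rm T}{\bf \Psi}^{-1}A{\bf \Psi}^{-{\rm T}}{\bf \Psi}=A$ does follow cleanly from $A=\tfrac12({\bf \Psi}-{\bf \Psi}^{\rm T})$ as you indicate. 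Your preliminary step (constancy of the coefficients $c_{ij}$ from $\T$-invariance plus holomorphicity of $\gs_J$) is not in the paper and is not strictly needed, but it is a valid and useful consistency check --- indeed a $\T$-invariant holomorphic function on $(\C^*)^m$ is already constant, so you could even use it to reduce the computation to a single point of $M^0$.
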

\begin{proof}  The above formula follows by straightforward  but somewhat tedious matrix computations,  using  that $K_j = \frac{\partial}{\partial \ang_j}$,  \eqref{e:IB},  the identification (which  in turn follows from \eqref{e:gB})
\begin{equation*}
\pg^{-1}  \sim  \left( \begin{array}{cc} 
 {\bf \Psi}^{\rm T} & 0 \\
 0 & {\rm Id}  \end{array} \right) \left( \begin{array}{cc} 
 \X & \Y \\
 -\Y & \X \end{array} \right) \left( \begin{array}{cc} 
 {\bf \Psi} &0 \\
0 & {\rm Id} \end{array} \right), \end{equation*}
where
\begin{equation*}
\X:= \Big({\bf \Psi}^{\rm s} + B\big({\bf \Psi}^{\rm s}\big)^{-1}B\Big)^{-1}, \,\  \Y :=- \big({\bf \Psi}^{\rm s}\big)^{-1}B\X,
\end{equation*}
and the identities \eqref{linear-algebra}. \end{proof}

\subsection{The Bismut-Ricci form of a toric generalized K\"ahler structure}
M.~Abreu~\cite{Abreu1} obtained the expression of the Ricci form  $\rho_J$ of a  K\"ahler metric  $(g, J)\in \mathcal{K}^{\T}(M, \Fo)$ in terms of the momentum/angular coordinates and the corresponding $u\in \mathcal{S}(\Pol, {\La})$ as follows
\begin{equation}\label{Kahler-Ricci-form}
\rho_{J}  = \tfrac{1}{2}\d\d^c_{J} \Big(\log \det \big({\rm Hess}(u)\big) (\mu)\Big)=-\tfrac{1}{2} \sum_{i,j,k=1}^m  ({\bf G}^{-1})_{ij,ik}(\mu) \d\mu_k \wedge \d \ang_j.
\end{equation}
In this subsection we derive the Bismut-Ricci forms $\rho^B_I, \rho^B_J$ associated to a toric generalized K\"ahler structure $(\pg, I, J)$, using momentum-angular coordinates of $\som$, thus extending \eqref{Kahler-Ricci-form}.
\begin{lemma}\label{Ricci} Let $(\pg, I, J)$ be a toric generalized K\"ahler structure, associated to the data $(u, A, B)$. Then,  the Bismut-Ricci forms $\rho_I^B, \rho^B_J$  of $(\pg, J)$  are given on $M^0$ by
\begin{equation*}
\begin{split}
\rho^B_I  = & \ \tfrac{1}{2}\d\d^c_J \Big(\log  \det \big({\rm Hess}(u)\big)(\mu) + \sqrt{-1}B \Big),\\
\rho^B_J  =&\ \tfrac{1}{2}\d\d^c_I \Big(\log  \det \big({\rm Hess}(u)\big)(\mu) + \sqrt{-1}B \Big).
\end{split}
\end{equation*}
\end{lemma}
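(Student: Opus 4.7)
My approach is to apply Proposition~\ref{p:symplectic-Ricci-potential}, which represents $\rho^B_I$ as $dd^c_J$ of an explicit scalar potential $\Phi_{J,\Theta}$ built from $F^{[m]}/v_\Theta$ and $F^{[m]}/\omega_J^{[m]}$, and to evaluate these two ratios in the toric momentum/angular coordinates $(\mu,\theta)$ on $M^0$. The formula for $\rho^B_J$ will then follow from the $I\leftrightarrow J$ symmetry~\eqref{symmetry-AB}.

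The easiest ingredient is $F^{[m]}=\Fo^{[m]}=d\mu_1\wedge d\theta_1\wedge\cdots\wedge d\mu_m\wedge d\theta_m$, since by~\eqref{F-B}, $F=\Fo+\langle d\mu,B,d\mu\rangle$ and any 2-form of the shape $d\mu_i\wedge d\mu_j$ wedges trivially with $\Fo^{m-1}$ (each monomial of the latter already contains $m-1$ of the $d\mu_k$'s). For $v_\Theta$ I take the local $J$-holomorphic volume form $\Theta=d\tilde w_1\wedge\cdots\wedge d\tilde w_m$ with $\tilde w_j=\tilde y_j+\sqrt{-1}\theta_j$, where $\tilde y_j$ are the pluriharmonic coordinates of $J$ from~\eqref{tilde-y}. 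Since $\partial\tilde y_j/\partial\mu_l=u_{,jl}+A_{lj}=(\Psi^{\rm T})_{jl}$ with $\Psi={\rm Hess}(u)+A$, one has $d\tilde y_1\wedge\cdots\wedge d\tilde y_m=\det(\Psi)\,d\mu_1\wedge\cdots\wedge d\mu_m$, and using $d\tilde w_j\wedge d\bar{\tilde w}_j=-2\sqrt{-1}\,d\tilde y_j\wedge d\theta_j$ I obtain, up to an orientation sign whose $\log$ is a constant annihilated by $dd^c_J$,
\[
v_\Theta\;=\;2^m\det\bigl({\rm Hess}(u)+A\bigr)\,F^{[m]}.
\]

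The main computation is $\omega_J^{[m]}/F^{[m]}=\sqrt{\det \pg}$, with $\pg$ given by the Gram matrix~\eqref{e:gB} in the basis $(\partial_{\theta_i},\partial_{\mu_j})$. The key linear-algebra input is the identity
\[
\det\begin{pmatrix}X & Y \\ -Y & X\end{pmatrix}=\bigl[\det(X+\sqrt{-1}Y)\bigr]^2
\]
for real $X^{\rm T}=X$ and $Y^{\rm T}=-Y$, proved by the unitary conjugation
$P^{-1}\begin{pmatrix}X & Y\\-Y & X\end{pmatrix}P=\begin{pmatrix}X+\sqrt{-1}Y & 0\\0 & X-\sqrt{-1}Y\end{pmatrix}$ with $P=\tfrac{1}{\sqrt 2}\begin{pmatrix}I & \sqrt{-1}I\\\sqrt{-1}I & I\end{pmatrix}$. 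Applied to the inner block of the factorization in~\eqref{e:gB} with $X=\Psi^{\rm s}={\rm Hess}(u)$ and $Y=B$, together with the outer diagonal contribution $\det(\Psi^{-1})\det((\Psi^{\rm T})^{-1})=\det(\Psi)^{-2}$, this yields
\[
\sqrt{\det\pg}\;=\;\frac{\det\bigl({\rm Hess}(u)+\sqrt{-1}B\bigr)}{\det\bigl({\rm Hess}(u)+A\bigr)},
\]
both determinants being positive (numerator by the taming condition~\eqref{positivity}; for the denominator, writing $H={\rm Hess}(u)$, the skew matrix $H^{-1/2}AH^{-1/2}$ has spectrum in conjugate pairs $\pm\sqrt{-1}\lambda$, so $\det(I+H^{-1/2}AH^{-1/2})=\prod(1+\lambda^2)>0$).

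Substituting into $\Phi_{J,\Theta}=-\tfrac12\log(F^{[m]}/v_\Theta)-\tfrac12\log(F^{[m]}/\omega_J^{[m]})$ produces a cancellation of the $\log\det({\rm Hess}(u)+A)$ terms---this is the local origin of the observation from the introduction that $A$-type deformations are invisible in the reduced scalar equation~\eqref{toric-GKRF-Kahler}---and leaves $\Phi_{J,\Theta}\equiv\tfrac12\log\det\bigl({\rm Hess}(u)(\mu)+\sqrt{-1}B\bigr)$ modulo constants. Applying $dd^c_J$ gives the first formula. For $\rho^B_J$, I invoke~\eqref{symmetry-AB}, which transforms the toric data as $(u,A,B)\leftrightarrow(u,-A,-B)$ and swaps the roles of $I$ and $J$; the identity just established, applied to the swapped structure, gives $\rho^B_J=\tfrac12\,dd^c_I\log\det\bigl({\rm Hess}(u)-\sqrt{-1}B\bigr)$, which agrees with the claimed formula since ${\rm Hess}(u)+\sqrt{-1}B$ is Hermitian (so its determinant is real and invariant under conjugation). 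The main obstacle is the block $2m\times 2m$ determinant computation for $\pg$, together with the block-diagonalization identity above; a useful sanity check is the K\"ahler limit $A=B=0$, $I=J$, in which the formula collapses to Abreu's identity~\eqref{Kahler-Ricci-form}.
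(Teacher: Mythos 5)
Your proposal is correct and follows essentially the same route as the paper: both apply Proposition~\ref{p:symplectic-Ricci-potential} with the toric holomorphic volume form built from the $J$-pluriharmonic coordinates, evaluate the ratios $F^{[m]}/v_\Theta$ and $F^{[m]}/\omega_J^{[m]}$ in momentum-angle coordinates via the Gram matrix \eqref{e:gB} to find the potential $\tfrac12\log\det({\rm Hess}(u)+\sqrt{-1}B)$ up to constants, and then obtain $\rho^B_J$ from the symmetry \eqref{symmetry-AB} together with $\det({\rm Hess}(u)+\sqrt{-1}B)=\det({\rm Hess}(u)-\sqrt{-1}B)$. The only (immaterial) difference is that the paper computes the Hermitian Gram determinant $\det\big(g(K_i,K_j)-\sqrt{-1}\omega_J(K_i,K_j)\big)$ directly, whereas you compute the real $2m\times 2m$ determinant and extract its square root via the block-diagonalization identity.
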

\begin{proof}  We are going to use Proposition~\ref{p:symplectic-Ricci-potential} with respect to the holomorphic section (defined on $M^0$)  
\[\Theta_J := \left((K_1- \sqrt{-1}JK_1) \wedge \cdots \wedge (K_m - \sqrt{-1}JK_m)\right)^{-1}.\]
To this end, recall that $K_j =\frac{\partial}{\partial \ang_j}$ and,  by \eqref{e:IB} and  \eqref{e:gB},   we have (in momentum/angular coordinates)
\begin{equation}\label{omegaJ}
\pom_J \sim  \left( \begin{array}{cc} 
{\bf \Psi}^{-1}B\big({\bf \Psi}^{\rm T}\big)^{-1} & - {\bf \Psi}^{-1}{\bf \Psi}^{\rm s} \\
 {\bf \Psi}^{\rm s} \big({\bf \Psi}^{\rm T}\big)^{-1} & B \end{array} \right), \end{equation}
so  we compute,   by using the above together with  \eqref{e:gB}, \eqref{omegaJ}, \eqref{linear-algebra}
\begin{align*}
\begin{split}
2^{-m}\left(\frac{\pom_J^{[m]}}{v_{\Theta}}\right) & = \det \Big(g(K_i, K_j) - \sqrt{-1}\omega_J(K_i, K_j)\Big) \\
                                      &=\det \Big(\big({\bf \Psi}^{\rm T}\big)^{-1}\big( {\bf \Psi}^{\rm s} - \sqrt{-1}B\big) {\bf \Psi}^{-1}\Big) =  \frac{\det\big({\rm Hess}(u) + \sqrt{-1}B\big)}{\big(\det {\bf \Psi}\big)^2},
                                      \end{split}
\end{align*}
As $\Fo^{[m]} =F^{[m]}$ (see \eqref{omega} and \eqref{F-B}), using \eqref{omega} and \eqref{omegaJ}, we also get
 \[ \left(\frac{F^{[m]}}{\pom_J^{[m]}}\right) =\left(\frac{\Fo^{[m]}}{\pom_J^{[m]}}\right)= \frac{\det{\bf \Psi}}{\det\left(\Hess(u) + \sqrt{-1}B\right)},\]
 and therefore
 \begin{equation}\label{potential-computation}
 \begin{split}
 \left[\log\left(\frac{F^{[m]}}{v_{\Theta}}\right) + \log\left(\frac{F^{[m]}}{\omega_J^{[m]}}\right)\right] &=\left[2\log\left(\frac{F^{[m]}}{\omega_J^{[m]}}\right) + \log\left(\frac{\pom_J^{[m]}}{v_{\Theta}}\right)\right] \\
 &=-\log \det \left( \Hess(u) + \sqrt{-1}B\right) + \mbox{const}.
 \end{split}
 \end{equation}
This yields the formula for $\rho^B_I$ via Proposition~\ref{p:symplectic-Ricci-potential}.  Using the symmetry \eqref{symmetry-AB} and that $\det({\rm Hess}(u) + \sqrt{-1}B)= \det({\rm Hess}(u) - \sqrt{-1}B)$, we get the formula for $\rho^B_J$. \end{proof}

We now restrict to the special case when the compact toric symplectic manifold $(M, \Fo, \T)$ satisfies the (topological) Fano condition $2\pi c_1(M, J_u) = [\Fo]$ for any $J_u \in \mathcal{K}^{\T}(M, \Fo)$.   It is well-known that  this  is equivalent to the assumption that $(\Pol, {\La})$ satisfies 
\begin{align}\label{Fano}
 L_j(0) = 1, \quad \, j=1, \ldots d.
 \end{align}
 We refer to a Delzant polytope  $(\Pol, {\La})$ satisfying \eqref{Fano} as {\it barycentred}.   In this setting,  by \eqref{Kahler-Ricci-form}  and \eqref{Kahler-toric-potential}, we have for any $\Fo$-compatible toric K\"ahler structure $(g_u, J_u)$
\begin{equation}\label{ricci-potential}
\rho_{J_u} - \Fo  = \d\d^c \Big(\tfrac{1}{2} \log \det \big({\rm Hess}(u(\mu))\big)  - (-u(\mu) + \sum_{i=1}^m \mu_i u_{,i}(\mu))\Big).
\end{equation}
Using the boundary conditions for $u$ in Proposition~\ref{p:boundary} (which imply that $\det{\rm Hess}(u) = \frac{\delta(x)}{\prod_{j=1}^d L_j(x)}$ for a positive smooth function $\delta$ on $\Pol$, see \cite{Abreu2})  and the Fano condition \eqref{Fano}, the function 
\begin{equation}\label{h-u}
h_u(\mu) :=\tfrac{1}{2}  \log \det \big({\rm Hess}(u(\mu))\big) - (-u(\mu) + \sum_{i=1}^m \mu_i u_{,i}(\mu))
  \end{equation}
appearing  at the RHS of  \eqref{ricci-potential}  is smooth on $M$. It thus follows that $\rho_{J_u}-  \Fo =  \d\d^c_{J_u} h_u$ \emph{globally} on $M$.

Similar to the K\"ahler toric Fano case, a generalized K\"ahler toric structure corresponding to data $(u,A,B)$ also admits a relative Ricci potential:

\begin{lemma}\label{l:global-potential} Suppose $(\Pol, {\La})$ is a barycentred Delzant polytope corresponding to a toric Fano variety. Then, for any toric GK structure $(\pg, I,J)$ corresponding to the data $(u,A,B)$, the function 
\[
\begin{split}
{\tih}_{u, B}(x) &:= \tfrac{1}{2}\log\det\left({\rm Hess}(u(x)) + \sqrt{-1}B\right) - \big(-u(x) + \sum_{j=1}^mx_j  u_{,j}(x)\big)\\
                       &= \tfrac{1}{2}\log\det\left(\big({\rm Hess}(\phi(y))\big)^{-1} + \sqrt{-1}B\right)  -\phi(y) \end{split} \]
pulls-back by the momentum map $\mu$ of $F$  to a smooth function on $M$. Furthermore, if $A=0$ we have
\[
\begin{split}
&\rho^B_J - \kom_{I} = \d \d^c_{I}(h_{u,B}(\mu)),\qquad  \rho^B_I - \kom_J = \d \d^c_J(h_{u, B}(\mu)),\\
&\tfrac{1}{2} (\rho^B_J + \rho^B_I)- F = \tfrac{1}{2}\big(\d \d^c_I (h_{u,B}(\mu)) + \d\d^c_J (h_{u,B}(\mu))\big).
\end{split} \]
\end{lemma}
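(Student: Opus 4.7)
The plan is to proceed in three stages: (i) show the two displayed expressions for $\tih_{u,B}$ coincide, (ii) show that $\tih_{u,B}(\mu)$ extends smoothly from $M^0$ to all of $M$, and (iii) derive the three identities for $\rho^B_I$, $\rho^B_J$, and their average (under the hypothesis $A=0$).

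For (i), I would use the Legendre duality of \eqref{Legendre}. Setting $y=\nabla u(x)$, one has $\Hess u(x)=\bigl(\Hess\phi(y)\bigr)^{-1}$ and, directly from $\phi(y)+u(x)=\langle y,x\rangle$, also $\phi(y)=-u(x)+\sum_j x_j u_{,j}(x)$. Substituting both into the first expression for $\tih_{u,B}$ yields the second.

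For (ii), I would use $\tih_{u,B}=h_u+\tfrac{1}{2}\log\det\bigl(\Id+\sqrt{-1}(\Hess u)^{-1}B\bigr)$. The first term $h_u(\mu)$ is the classical K\"ahler Ricci potential on a toric Fano variety, and is smooth on $M$ by Abreu's analysis recalled in \eqref{ricci-potential}–\eqref{h-u} together with Proposition~\ref{p:boundary} and the Fano normalization \eqref{Fano}. For the correction term, the key input is that the \emph{contravariant} matrix $H(x):=(\Hess u)^{-1}$ of a toric symplectic potential $u\in\mathcal{S}(\Pol,\La)$ extends to a smooth $S^2\tor$-valued function on $\Pol$ (this is the standard smoothness of the inverse-Hessian in Abreu--Guillemin theory, a direct consequence of Proposition~\ref{p:boundary}); it is degenerate but still smooth at the boundary. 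Consequently $H(x)B$ is smooth on $\Pol$, and the positive-definiteness condition \eqref{positivity} ensures that $\det(\Id+\sqrt{-1}HB)$ is a strictly positive real-valued smooth function on $\Pol$. Pulling back by the moment map, this gives a smooth function on $M$. Verifying this boundary regularity is what I expect to be the main technical obstacle, since \eqref{positivity} has to be invoked on every face of $\Pol$ to rule out both a zero and an imaginary-axis excursion of the determinant.

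For (iii), I would combine Proposition~\ref{p:symplectic-Ricci-potential} with the local computation \eqref{potential-computation} carried out in the proof of Lemma~\ref{Ricci}. That computation already shows, with $\Theta$ the standard $J$-holomorphic volume form built from $K_i-\sqrt{-1}JK_i$,
\begin{equation*}
\Phi_{J,\Theta} \;=\; \tfrac{1}{2}\log\det\bigl(\Hess u(\mu)+\sqrt{-1}B\bigr)+\mathrm{const},
\end{equation*}
so $\rho^B_I=\d\d^c_J\Phi_{J,\Theta}$. When $A=0$, \eqref{IJ-potential} gives $\kom_J=\d\d^c_J\phi$ globally, and since $\tih_{u,B}=\Phi_{J,\Theta}-\phi(y)$ up to a constant (by the second displayed formula), subtracting yields $\rho^B_I-\kom_J=\d\d^c_J\tih_{u,B}(\mu)$. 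The formula for $\rho^B_J-\kom_I$ follows from the symmetry \eqref{symmetry-AB}, swapping $(I,J)$ and $B\leftrightarrow-B$, together with the elementary identity $\det(\Hess u+\sqrt{-1}B)=\det(\Hess u-\sqrt{-1}B)$ (both numbers are real because $\Hess u$ is symmetric and $B$ skew-symmetric). Finally, averaging the two identities and invoking \eqref{F-potential}, which gives $F=\tfrac{1}{2}(\kom_I+\kom_J)$ when $A=0$, yields the third identity.
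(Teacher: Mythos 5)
Your proposal is correct, and steps (i) and (iii) follow the paper's argument essentially verbatim: the curvature identities are obtained, as in the paper, by combining Lemma~\ref{Ricci} (equivalently Proposition~\ref{p:symplectic-Ricci-potential} plus \eqref{potential-computation}) with \eqref{IJ-potential} and \eqref{F-potential}, using the symmetry \eqref{symmetry-AB} and the reality of $\det(\Hess(u)+\sqrt{-1}B)$ to pass between the $I$- and $J$-identities. The one place where you genuinely diverge is the smoothness of the correction term $\tih_{u,B}-h_u=\tfrac12\log\det\bigl(\Id+\sqrt{-1}(\Hess(u))^{-1}B\bigr)$: you propose a direct boundary analysis on $\Pol$, resting on the smooth extension of the inverse Hessian to $\partial\Pol$ and on the taming condition \eqref{positivity} holding on every face, and you rightly flag this as the main technical obstacle. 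The paper dissolves that obstacle with a one-line geometric identification: by the computation in the proof of Lemma~\ref{Ricci} (with $A=0$), $\det(\Hess(u))/\det(\Hess(u)+\sqrt{-1}B)=F^{[m]}/\omega_J^{[m]}$ for the GK structure obtained by a $B$-transformation of $(\bar g_u,\Fo)$, and this ratio is automatically smooth and positive on all of $M$ since $F$ and $\omega_J$ are globally defined with $F^{[m]}\geq\omega_J^{[m]}>0$ by \eqref{volume}. Your route can be closed --- the smooth extension of $(\Hess(u))^{-1}$ is standard Abreu--Guillemin theory, and the nonvanishing of the determinant on $\partial\Pol$ is exactly the global taming hypothesis built into the assumption that $(g,I,J)$ is a toric GK structure on all of $M$ --- but the volume-ratio identity buys you this for free and is the cleaner way to finish; I would recommend substituting it for the boundary analysis you left open.
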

\begin{proof} The smoothness of  ${\tih}_{u, B}(\mu)$ follows from the smoothness of $h_u(\mu)$ in  \eqref{h-u},  and the fact that $\det({\rm Hess}(u))/\det({\rm Hess}(u) + \sqrt{-1}B)$ is smooth and positive on $M$ as it computes the ratio $F^m/\omega_J^m$, where $(F, \omega_J)$ is the GK structure obtained by a $B$-transformation of the K\"ahler structure $({\overline g}_u, \Fo)$.  The Bismut-Ricci curvature formulas follow from Lemma~\ref{Ricci}, \eqref{IJ-potential}, and \eqref{F-potential}. \end{proof}

\section{The normalized generalized K\"ahler-Ricci flow on a toric Fano variety} \label{s:LTE}
\subsection{Scalar Reduction} 

We now describe the NGKRF \eqref{e:NGKRF} with $\gl = 1$, starting from a toric generalized K\"ahler structure $(g, I, J)$ on a smooth toric Fano manifold $(M, \som)$. We suppose that the initial symplectic form $F$ belongs to $2\pi c_1(M, J_0)$,  and that  $(g, I, J)$ satisfies Convention~\ref{convention} (so  we have $\kom=\som  \in 2\pi c_1(M, J)$ as well).  The solution of  \eqref{e:NGKRF} is defined in terms of  Hermitian metrics $(g_t, J)$ on a fixed complex manifold $(M, J)$, whereas Convention~\ref{convention} normalizes (via the action of a $\T$-equivariant diffeomorphism)  the toric GK structures on $M$  to be obtained from a K\"ahler structure $(\kg_u, \kJ_u)$ compatible with the fixed symplectic form $\som$ and canonical angular coordinates via deformations of type $A$ and $B$. This is similar to the symplectic versus complex viewpoints in the K\"ahler case (see Proposition~\ref{p:symplectic-to-complex}),  and  the  $\T$-equivariant diffeomorphism  in Lemma~\ref{l:complex-identification}  connects the  two points of view in the generalized K\"ahler case. The next proposition makes this explicit.

\begin{prop}\label{p:general-reduction} Let $(g_0, I_0, J_0)$ be a toric GK structure on a Fano toric variety, corresponding to the data $(u_0, A_0 , B_0)$  with respect to a symplectic form $\Fo \in 2\pi c_1(M, J_0)$. 
Denote by $(g_t, I_t, J_t)$ the family of GK structures corresponding to the solution $(u_t, A_t, B_t)$ of
\begin{equation}\label{toric-GKRF}
\begin{split}
\frac{\partial}{\partial t} u =&\ \log \Big( {\det} \big({\rm Hess}(u) + \sqrt{-1}e^{-2t} B_0\big)\Big) - 2\Big(-u +\sum_{j=1}^m x_j u_{,j} \Big),\\
A_t =&\ e^{-2t}A_0 ,  \qquad  B_t= e^{-2t}B_0.
\end{split}
\end{equation}
Then $\Phi_t^*(g_t,I_t,J_t)$ solves~\eqref{e:NGKRF}, where $\Phi_t^*$ is the canonical biholomorphism between $(M,J_0)$ and $(M,J_t)$ from Lemma~\ref{l:complex-identification}.
\end{prop}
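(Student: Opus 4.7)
The plan is to construct $(g_t, I_t, J_t)$ from the scalar solution $(u_t, A_t, B_t)$ of \eqref{toric-GKRF} via Proposition~\ref{p:toric-GK} and verify that $\Phi_t^*(g_t, I_t, J_t)$ satisfies \eqref{e:NGKRF-2} on the fixed complex manifold $(M, J_0)$. By Lemma~\ref{evolution-b} together with the $+2F$ normalization of the Fano case, \eqref{e:NGKRF-2} is equivalent to the single two-form equation
\[
\partial_t F_t = -2\rho^B_{J_t}(g_t) + 2F_t,
\]
which is what I would verify. The values $A_t = e^{-2t}A_0$ and $B_t = e^{-2t}B_0$ in \eqref{toric-GKRF} are dictated by Proposition~\ref{p:NGKRFPoisson}(2), which gives $\sigma_t = e^{-2t}\sigma_0$, combined with Proposition~\ref{p:toric-Poisson}, which identifies $\sigma_{J_t}$ explicitly in terms of the complex combination $A_t + \sqrt{-1}B_t$; taking real and imaginary parts forces the exponential decay of both $A$ and $B$.

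Turning to the scalar PDE, Convention~\ref{convention} gives $F_t = \Fo + \langle d\mu, B_t, d\mu\rangle$, so $\partial_t F_t = -2\langle d\mu, B_t, d\mu\rangle$ is immediate. On the other side, Lemma~\ref{Ricci} expresses $\rho^B_{J_t}$ as $\tfrac{1}{2}dd^c_{I_t}$ of the $A$-independent scalar $\log\det({\rm Hess}(u_t) + \sqrt{-1}B_t)$, and the Fano condition \eqref{Fano} ensures that the relative Ricci potential $h_{u_t, B_t}$ of Lemma~\ref{l:global-potential} extends smoothly to $M$. Using the identities \eqref{symmetry} relating $F_t$, $\kom_{I_t}$, and $\kom_{J_t}$, the $F$-equation reduces, modulo exact forms, to a scalar equation on the Delzant polytope. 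After accounting for the $+2F_t$ normalization, which contributes the Kähler--Ricci potential term $-2(-u_t + \sum_j x_j (u_t)_{,j})$ coming from the Fano class via Lemma~\ref{l:global-potential}, this scalar equation is exactly the first line of \eqref{toric-GKRF}.

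The main subtlety, and the hardest step, is the role of the canonical biholomorphism $\Phi_t$ from Lemma~\ref{l:complex-identification}. It reconciles the $(u_t, A_t, B_t)$-parametrization on $(M, \Fo)$, in which $J_t$ varies with $t$ via the type-$A_t$ deformation, with the $J$-fixed gauge on $(M, J_0)$ in which \eqref{e:NGKRF-2} is naturally posed. Because $\Phi_t$ itself is time-dependent, pulling back the $F$-flow equation by $\Phi_t$ introduces a Lie-derivative contribution $\mathcal L_{V_t}F_t$ along the generating vector field $V_t$ of $\Phi_t$, which must be computed explicitly and shown to close the reduction consistently. Using the explicit formula for $\Phi_t$ in the pluriharmonic coordinates $(\tilde y_t = \nabla u_t + A_t(\mu), \ang^c)$ from the proof of Lemma~\ref{l:complex-identification}, this Lie-derivative piece combines with the remaining terms to yield the stated PDE. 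The clean, $A$-independent form of \eqref{toric-GKRF} is a structural reflection of the $A$-independence of the scalar Bismut--Ricci potential in Lemma~\ref{Ricci}, together with the symmetry \eqref{symmetry-AB}: all $A$-dependence is absorbed into $\Phi_t$ and the complex structure itself, leaving only $(u_t, B_t)$ in the reduced equation.
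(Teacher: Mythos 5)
Your strategy coincides with the paper's: both reduce \eqref{e:NGKRF} to the single two-form equation $\tfrac{\partial}{\partial t}\tilde F_t = -2\rho^B_{J}(\tilde \pg_t) + 2\tilde F_t$ for the pulled-back forms $\tilde F_t=\Phi_t^*(F_t)$, both rely on the Bismut--Ricci potential of Lemma~\ref{Ricci} and on Lemma~\ref{l:global-potential} for the normalization term, and both identify the time-dependence of the canonical biholomorphism $\Phi_t$ as the delicate point (your Lie-derivative $\mathcal L_{V_t}F_t$ is the infinitesimal version of the paper's direct differentiation of $\tilde F_t$ written in the fixed coordinates $\tilde y_0$). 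The side remarks --- that the decay rates $A_t=e^{-2t}A_0$, $B_t=e^{-2t}B_0$ are forced by Propositions~\ref{p:NGKRFPoisson} and~\ref{p:toric-Poisson}, and that $\tfrac{\partial}{\partial t}F_t$ before pullback is immediate --- are correct but peripheral.

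The gap is that the proposal stops exactly where the proof begins: the sentence ``this Lie-derivative piece combines with the remaining terms to yield the stated PDE'' asserts the entire content of the proposition without establishing it. What is actually required is (i) the identity \eqref{J-Legendre} expressing $F$ as $\d\d^c_J\tilde\phi$ plus explicit $A$- and $B$-correction terms, so that $\tilde F_t$ can be written in the fixed gauge; (ii) the explicit formulas \eqref{y-dynamic} and \eqref{mu} for $\tfrac{\partial \tilde y_t}{\partial t}$ and $\tfrac{\partial\mu^t}{\partial t}$, which is where your Lie-derivative contribution lives; and (iii) the rearrangement, via \eqref{e:IB}, by which the $A$- and $B$-dependent remainders convert $\d J_t\d\log\det\big({\rm Hess}(u_t)+\sqrt{-1}B_t\big)$ into $\d I_t\d\log\det\big({\rm Hess}(u_t)+\sqrt{-1}B_t\big)=2\rho^B_{J_t}$. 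Step (iii) is essential and is not a formal consequence of the $A$-independence of the scalar potential: by Lemma~\ref{Ricci}, $\d\d^c_{J_t}$ of that potential gives $2\rho^B_{I_t}$, not $2\rho^B_{J_t}$, and one must check that the $\Phi_t$-correction terms supply precisely the difference $\rho^B_{J_t}-\rho^B_{I_t}=\tfrac12\mathcal L_{\theta_{J}^\sharp-\theta_I^\sharp}F_t$ of Proposition~\ref{p:Ricci-difference}. Until that cancellation is exhibited, the argument is a plausible plan rather than a proof.
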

\begin{proof} We shall do computations on $M^0$, using by Lemma~\ref{l:global-potential} that for $u_t\in \mathcal{S}(\Pol, {\La})$ the RHS of \eqref{toric-GKRF} is a smooth function on $\Pol$, so  \eqref{toric-GKRF} is a flow of elements $u_t\in \mathcal{S}(\Pol, {\La})$ defined  for $t\in [0, T)$,  thus giving  rise to  a smooth path of GK structures $(g_t, I_t, J_t), t\in [0, T)$  on $M$, obtained as deformations of type $A$ and $B$ of the K\"ahler structures $(g_{u_t}, J_{u_t})$ in $\mathcal{K}^{\T}(M, \Fo)$ with the $2$-vectors $A_t, B_t \in \gL^2 \tor$.  

We denote by $J_t$ the  complex structure defined by the deformation of type $A$ of the complex structures  $J_{u_t}\in \mathcal{K}^{\T}(M, \Fo)$  with  the $2$-vector  $A_t= e^{-2t}A_0\in \gL^2\tor$,  and by  $F_t= \som+ e^{-2t}\langle \d \mu,  B_0, d\mu\rangle$ the symplectic form of $(g_t, J_t, I_t)$, with $I_t$ being  the $F_t$-conjugate  of the almost complex structure $J_t$. Denote by $\tilde y_t$  the $J_t$-pluriharmonic variables \eqref{tilde-y}, and  consider the $\T$-equivariant  diffeomorphism  $\Phi_t$   defined in Lemma~\ref{l:complex-identification}, which satisfies  $\Phi_t (\tilde y_t) = \tilde y_0$  and $\Phi_t^*(J_t) = J_0=:J$.  We let $\tilde F_t : = \Phi_t^*(F_t)$, so that $\tilde F_t$ is a symplectic form on $(M, J)$ which gives rise to 
a GK structure $(\tilde\pg_t, \tilde I_t, J)$ compatible with $\tilde F_t$.  We will check that with $(u_t, A_t, B_t)$ satisfying \eqref{toric-GKRF}, the symplectic forms $\tilde F_t$ satisfy 
\begin{equation}\label{symplectic-generalized K\"ahler Ricci flow}
\frac{\part}{\part t} \tilde F_t = - 2\rho_J^B (\tilde \pg_t)  + 2 \tilde F_t.
\end{equation}
Taking the $(1,1)$ part with respect to $J$ of \eqref{symplectic-generalized K\"ahler Ricci flow}  yields \eqref{e:NGKRF} with $\lambda =1$.

For a toric GK structure $(\pg, I, J)$,  corresponding to the data  $(u, A, B)$, we consider the pluriharmonic variables $\tilde y$ with respect to $J$, defined in \eqref{tilde-y}, and a function $\tilde\phi(\tilde y)=\phi(y)$,  introduced on $M^{0}$ by 
\begin{equation}\label{tilde-phi}
\tilde \phi (\tilde y) := -u(\mu)  +\langle\mu, \tilde y\rangle =  -u(\mu) +\langle\mu, y\rangle= \phi(y), 
\end{equation}
where $y_j = u_{,j}(\mu)$,    and $\phi(y)$ is  the Legendre transform  \eqref{Legendre} of $u$. Using \eqref{e:IB}
we obtain the following formula (the detailed verification of which we leave to the reader)

	\begin{equation}\label{J-Legendre}
	\d\d^c_J  \tilde \phi  =  F + \d J \Big(\sum_{i,j=1}^m  A_{ij} \mu_i  \d\mu_j\Big) -  \d\Big(\sum_{i,j=1}^m B_{ij} \mu_i \wedge \d\mu_j\Big).\\
	\end{equation}
Using \eqref{J-Legendre},  we have that
\begin{equation*}\label{e:F_t}
\begin{split}
\tilde F_t =&\ \Phi_t^*\Big(\d\d^c_{J_t} {\tilde \phi}_t({\tilde y}_t) -\d J_t\big(\sum_{i,j=1}^m \big(A_t)_{ij}\mu_i \d\mu_j\big) + \sum_{i,j=1}^m(B_t)_{ij}\d\mu_i \wedge \d\mu_j\Big)\\
                =&\ \d \d^c_J {\tilde \phi}_t({\tilde y}_0) -\d J \big(\sum_{i,j=1}^m \big(A_t)_{ij}\mu^t_i \d\mu^t_j\big) + \sum_{i,j=1}^m(B_t)_{ij}\d\mu^t_i \wedge \d\mu^t_j\Big),
                \end{split}
                \end{equation*}
where $\mu^t := \mu \circ \Phi_t$.
Using \eqref{toric-GKRF}, we  get
\begin{equation}\label{evolution}
\begin{split}
\frac{\partial}{\partial t} \tilde F_t  = &\ \frac{\partial}{\partial t}\Big(\d \d^c_{J} \tilde\phi_t({\tilde y}_0)   - \d J\big(\sum_{i,j=1}^{m} (A_t)_{ij}\mu_i^t \d\mu_j^t\big) + \sum_{i,j=1}^m(B_t)_{ij}\d\mu^t_i \wedge \d\mu^t_j\Big)  \\
                                           =&\ \d J \d\Big( \frac{\partial \tilde\phi_t}{\partial t}(\tilde y_0) \Big)  + 2  \d J\Big(\sum_{i,j=1}^{m} (A_t)_{ij} \mu_i^t \d\mu_j^t\Big) - 2\sum_{i,j=1}^m(B_t)_{ij}\d\mu^t_i \wedge \d\mu^t_j  \\
                                             &\ \qquad- \d J\Big[\sum_{i,j=1}^{m} \Big((A_t)_{ij} \big(\frac{\partial {\mu}_i^t}{\partial t}\big) \d\mu_j^t +  (A_t)_{ij}{\mu}_i^t \d\big(\frac{\partial {\mu}_j^t}{\partial t}\big)\Big)\Big] \\
     &\ \qquad + \d \Big[\sum_{i,j=1}^{m} \Big((B_t)_{ij} \big(\frac{\partial {\mu}_i^t}{\partial t}\big) \d\mu_j^t +  (B_t)_{ij}{\mu}_i^t \d\big(\frac{\partial {\mu}_j^t}{\partial t}\big)\Big)\Big]                                        .
\end{split}
\end{equation}
We thus need to compute the $t$ derivatives  of  $\tilde \phi_t$ and $\tilde \mu^t$.  We start with computing the $t$ derivative of $\tilde \phi_t$.  By  \eqref{tilde-y},  \eqref{tilde-phi} and \eqref{toric-GKRF}, we first compute
\begin{equation}\label{y-dynamic}
\frac{\partial \tilde y_r^t}{\partial t}  = \big(\log\det \big({\rm Hess}(u_t) + \sqrt{-1}B_t\big)\big)_{, r} - 2\sum_{\ell=1}^m \big({\bf \Psi}_t^{\rm T}\big)_{r\ell} \mu_\ell.
\end{equation}
Therefore, we have
\begin{equation}
\begin{split}
\frac{\partial \tilde \phi_t}{\partial t}(\tilde y_t)  =&\ - \sum_{r=1}^m \Big(\frac{\partial {\tilde \phi}_t}{\partial {\tilde y}_r}\Big)\Big(\frac{\partial \tilde y_r}{\partial t}\Big)  -\frac{\partial u_t}{\partial t}(\mu)  + \sum_{r=1}^m \mu_r\Big(\frac{\partial \tilde y_r}{\partial t}\Big)\\
=&\ - \frac{\partial u_t}{\partial t}(\mu)   - \sum_{k,r=1}^m  \big(A_t\big({\bf \Psi}_t^{\rm T}\big)^{-1}\big)_{kr} \big(\log \det \big({\rm Hess}(u_t)+ \sqrt{-1}B_t\big)\big)_{, r} \mu_k \\
=&\ - \log \det \big({\rm Hess} (u_t) + \sqrt{-1}B_t\big) + 2\tilde \phi_t(\tilde y_t) \\
&\ \qquad - \sum_{k,r=1}^m  \big(A_t{\big(\bf \Psi}_t^{\rm T}\big)^{-1}\big)_{kr} \big(\log \det \big({\rm Hess}(u_t) + \sqrt{-1}B_t\big)\big)_{, r} \mu_k,
\end{split}
\end{equation}
where for passing from the first line to the second we have used \eqref{y-dynamic} and the fact that $A_{ij}$ is skew-symmetric, and for passing from the second line to the third we have used \eqref{toric-GKRF} and the fact that $\tilde \phi_t(\tilde y_t)= \phi_t(y_t)$ (see \eqref{tilde-phi}).

We now compute  the derivative of $\mu^t= \mu \circ \Phi_t$.  Using that $\mu_j^t(\tilde y_t)= \mu_j$ and \eqref{y-dynamic}, we get
\begin{equation}\label{mu}
\begin{split}
\frac{\partial {\mu}_j^t}{\partial t}(\tilde y_t)  
                                    &= 2\mu_j  - \sum_{r=1}^m \big({\bf \Psi}_t^{\rm T}\big)^{-1}_{jr}\big(\log{\det}\big({\rm Hess}(u_t) + \sqrt{-1}B_t\big)\big)_{, r}. 
\end{split}
\end{equation}
Thus, substituting back in \eqref{evolution} and using ~\eqref{J-Legendre}, we obtain  
\begin{equation*}\label{crucial}
\begin{split}
& \big(\Phi_t^*)^{-1}\Big( \frac{\partial}{\partial t} \tilde F_t\Big) \\
& \qquad = \d  \d^c_{J_t}\Big[2\tilde \phi_t -  \log \det \big({\rm Hess}(u_t) + \sqrt{-1}B_t\big) - \sum_{k,r=1}^m \big(A_t\big({\bf \Psi}_t^{\rm T}\big)^{-1}\big)_{kr} \mu_k \big(\log \big(\det{\rm Hess}(u_t) + \sqrt{-1}B_t\big)\big)_{, r}\Big]\\
                                                                                            &\qquad \quad -2 \d J_t\Big[\sum_{k,r=1}^m (A_t)_{kr}\mu_k \d\mu_r\Big]  + 2\d\Big[\sum_{k,r=1}^m (B_t)_{kr}\mu_k \d\mu_r\Big] \\
                                                                                           & \qquad \quad
                                                                                            - \d J_t\Big[\sum_{k,r=1}^m \big(A_t\big({\bf \Psi}_t^{\rm T}\big)^{-1}\big)_{kr}\big(\log \det \big({\rm Hess}(u_t) + \sqrt{-1}B_t\big)\big)_{, r} \d\mu_k\Big] \\
                                                                                            &\qquad \quad + \d J_t\Big[\sum_{k,r=1}^m   \mu_k \d\Big(\big(A_t\big({\bf \Psi}_t^{\rm T}\big)^{-1}\big)_{kr}\big(\log\det\big({\rm Hess}(u_t)+\sqrt{-1}B_t\big)\big)_{,r}\Big)\Big] \\
                                                                                            &\qquad \quad + 2\d \Big[\sum_{k,r=1}^m \big(B_t\big({\bf \Psi}_t^{\rm T}\big)^{-1}\big)_{kr}\big(\log \det \big({\rm Hess}(u_t) + \sqrt{-1}B_t\big)\big)_{, r} \d\mu_k\Big].
                                                                                            \end{split}
                                                                                            \end{equation*}
Furthermore, using Lemma~\ref{Ricci} and \eqref{e:IB}, we finally have
\begin{equation*}
\begin{split}
& \big(\Phi_t^*)^{-1}\Big( \frac{\partial}{\partial t} \tilde F_t\Big)  = 2F_t  -\d J_t \d \Big[\log \det\big( {\rm Hess}(u_t) + \sqrt{-1}B_t\big)\Big]  \\
                                                                                        &\qquad \quad - 2 \d\Big[\sum_{\ell,r=1}^m \Big({\bf \Psi}_t^{-1} A_t\big({\bf \Psi}_t^{\rm T}\big)^{-1}\big)_{\ell r} \big(\log\det\big({\rm Hess}(u_t) + \sqrt{-1}B_t\big)\big)_{,r} \d \ang_{\ell}\Big) \Big] \\
                                                                                        &\qquad \quad + 2\d \Big[\sum_{k,r=1}^m \big(B_t\big({\bf \Psi}_t^{\rm T}\big)^{-1}\big)_{kr}\big(\log \det \big({\rm Hess}(u_t) + \sqrt{-1}B_t\big)\big)_{, r} \d\mu_k\Big] \\
                                                                                        &\qquad = 2F_t  -\d J_t\d\Big[\log \det \big({\rm Hess}(u_t) + \sqrt{-1}B_t\big)\Big] - 2 \d\Big[\sum_{\ell,r=1}^m \big({\bf \Psi}_t^{-1}\big)^{\rm a}_{r\ell}\big(\log\det{\rm Hess}(u_t) + \sqrt{-1}B_t\big)_{,r} \d\ang_{\ell}\Big) \Big]\\
   &\qquad \quad - 2\d \Big[\sum_{k,r=1}^m \big({\bf \Psi}_t^{-1} B_t\big)_{rk}\big(\log \det \big({\rm Hess}(u_t) + \sqrt{-1}B_t\big)\big)_{, r} \d\mu_k\Big] \\
                                                                                  &\qquad = 2F_t  - \d {I_t} \d \Big[\log \det \big({\rm Hess}(u_t) + \sqrt{-1}B_t\big)\Big] = 2F_t -2 \rho^B_{J_t}(\pg_t),
\end{split}
\end{equation*}
as required. 
\end{proof}

We apply the Legendre transform \eqref{Legendre} to $u_t(x)$ and use the formula \eqref{Legendre-differential} to rewrite the evolution equation \eqref{toric-GKRF} of symplectic potentials  $u_t(x)$  as an evolution equation of  convex functions $\phi_t(y)$  on $\R^m$ (similar to the reduction \eqref{complex-KRF} of the normalized  K\"ahler-Ricci flow):

\begin{prop} \label{p:phireduction} Let $(u_t, A_t, B_t)$ denote a solution to (\ref{toric-GKRF}) on a toric Fano manifold.  The associated one-parameter family of potentials $\phi_t(y)$ on $\mathbb R^m$ satisfies
\begin{equation}\label{toric-GKRF-kahler}
\frac{\partial}{\partial t} \phi_t = \log  {\det}  \Big( \big({\rm Hess}(\phi_t)\big)^{-1} + \sqrt{-1}e^{-2t}B_0\Big)^{-1} + 2 \phi_t.
\end{equation}
\begin{proof} This follows from   \eqref{toric-GKRF} and the basic properties of the Legendre transform: ${\rm Hess}(\phi_t(y))= \Big({\rm Hess}(u_t(x))\Big)^{-1}$ and \eqref{Legendre-differential}.
\end{proof}
\end{prop}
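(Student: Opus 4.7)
The approach is a direct translation via Legendre duality \eqref{Legendre}, \eqref{Legendre-differential}; no new ingredients are required beyond what has already been developed. I would proceed in three short steps.

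First, I would rewrite the right-hand side of \eqref{toric-GKRF} purely in terms of $\phi$ and $y$. The standard Hessian identity for the Legendre transform,
\begin{equation*}
{\rm Hess}(u)(x) = \bigl({\rm Hess}(\phi)\bigr)^{-1}(y)
\end{equation*}
at corresponding points $y = \nabla u(x)$, converts the $\log\det$ term directly into $\log\det\bigl(({\rm Hess}(\phi))^{-1} + \sqrt{-1}e^{-2t}B_0\bigr)$. For the affine term, the very definition $\phi(y) + u(x) = \langle y, x\rangle$ combined with $y_j = u_{,j}(x)$ gives
\begin{equation*}
-u(x) + \sum_{j=1}^m x_j\,u_{,j}(x) = -u(x) + \langle x, y\rangle = \phi(y),
\end{equation*}
so the Legendre transform of the affine term $-2(-u + \sum_j x_j u_{,j})$ is simply $-2\phi$.

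Second, I would handle the time derivative. Differentiating $\phi_t(y) = \langle x, y\rangle - u_t(x)$ at fixed $y$ (so that $x = (\nabla\phi_t)(y)$ depends on $t$), the contribution from $\partial_t x$ vanishes because $y = \nabla u_t(x)$; this is exactly the content of \eqref{Legendre-differential}, which I would simply invoke:
\begin{equation*}
\frac{\partial}{\partial t}\phi_t(y) = -\frac{\partial}{\partial t}u_t(x).
\end{equation*}

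Combining the two previous steps with \eqref{toric-GKRF} yields
\begin{equation*}
\frac{\partial}{\partial t}\phi_t = -\log\det\bigl(({\rm Hess}(\phi_t))^{-1} + \sqrt{-1}e^{-2t}B_0\bigr) + 2\phi_t,
\end{equation*}
and rewriting $-\log\det(M) = \log\det(M^{-1})$ reproduces \eqref{toric-GKRF-kahler}. I do not anticipate any genuine obstacle; the only point that requires care is keeping track of which variable ($x$ vs.\ $y$) is held fixed while differentiating, but this bookkeeping is fully encoded in \eqref{Legendre-differential} so the argument reduces to substitution.
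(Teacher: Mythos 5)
Your proposal is correct and follows exactly the same route as the paper's (very terse) proof: apply the Legendre identities ${\rm Hess}(\phi_t)=({\rm Hess}(u_t))^{-1}$, $-u+\langle x,\nabla u\rangle=\phi$, and $\dot\phi=-\dot u$ from \eqref{Legendre-differential} to \eqref{toric-GKRF}. Your write-up merely makes explicit the substitutions the paper leaves to the reader.
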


The results of Proposition \ref{p:general-reduction} and \ref{p:phireduction} yield that the flow of $A$ completely decouples from the evolution equations for $u$ and $B$.  Because of this it suffices to study the flow in the case $A = 0$, an observation which greatly simplifies the technicalities to follow.

\begin{cor} \label{c:Adecoupling} Let $(g_0, I_0, J_0)$ be a toric GK structure on a Fano toric variety, corresponding to the data $(u_0, A_0 , B_0)$ with respect to a symplectic form $\som \in 2\pi c_1(M, J_0)$. Let $(u_t, 0, e^{-2t} B_0)$ be the unique maximal solution of (\ref{toric-GKRF}) with the initial data $(u_0, 0, B_0)$.
Denote by $(g_t,I_t,J_t)$ the generalized K\"ahler structure corresponding to the one-parameter family $(u_t, e^{-2t} A_0, e^{-2t} B_0)$, and by $\Phi_t^*$ the canonical biholomorphism between $(M,J_0)$ and $(M,J_t)$ from Lemma~\ref{l:complex-identification}. Then $\Phi_t^*(g_t,I_t,J_t)$ is the solution to \eqref{e:NGKRF} with initial data $(g_0, I_0, J_0)$.
\end{cor}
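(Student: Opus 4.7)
The plan is to observe that the $A$-parameter never enters the PDE governing $u$ in the reduced system \eqref{toric-GKRF}. The right-hand side of the evolution equation
\[
\frac{\partial}{\partial t} u = \log\det\!\Big({\rm Hess}(u) + \sqrt{-1}\,e^{-2t} B_0\Big) - 2\Big(-u + \sum_{j=1}^m x_j u_{,j}\Big)
\]
depends only on the unknown $u$ and on the initial skew-symmetric matrix $B_0$; the parameter $A$ evolves autonomously via $A_t = e^{-2t}A_0$ and enters neither the $u$-equation nor the $B$-equation. This is the content of ``decoupling''.

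Given this, the proof has essentially three lines. First, by the uniqueness for the parabolic scalar equation above, the function $u_t$ obtained as the maximal solution with initial data $(u_0, 0, B_0)$ coincides with the $u$-component of the maximal solution of \eqref{toric-GKRF} starting from $(u_0, A_0, B_0)$, regardless of the value of $A_0$. Second, since $A_t = e^{-2t} A_0$ and $B_t = e^{-2t} B_0$ are the unique solutions to the remaining (trivial) equations in \eqref{toric-GKRF}, the triple $(u_t, e^{-2t} A_0, e^{-2t} B_0)$ is the unique maximal solution of \eqref{toric-GKRF} with initial data $(u_0, A_0, B_0)$. Third, by Proposition~\ref{p:general-reduction} applied to this triple, the associated family of GK structures $(g_t, I_t, J_t)$, pulled back by the canonical biholomorphism $\Phi_t \colon (M, J_0) \to (M, J_t)$ from Lemma~\ref{l:complex-identification}, solves the NGKRF \eqref{e:NGKRF} with initial data $(g_0, I_0, J_0)$.

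There is no real obstacle here; the only point worth checking is that the corollary's identification of initial data is consistent, namely that the GK structure associated to $(u_0, e^0 A_0, e^0 B_0) = (u_0, A_0, B_0)$ is indeed $(g_0, I_0, J_0)$, which is immediate from Convention~\ref{convention}. The value of the statement is entirely conceptual: it reduces all subsequent analysis of the flow (the a priori estimates and long-time existence in Theorem~\ref{t:mainthm}) to the case $A \equiv 0$, in which one may exploit the simplifications of \S\ref{s:A=0}, in particular the identities \eqref{non-linear-link}--\eqref{F-potential} expressing $F$ and $\pom$ directly in terms of $\kom$ and the Poisson bivector $\Pi$.
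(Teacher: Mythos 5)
Your proposal is correct and is essentially the paper's own argument: the paper states the corollary as an immediate consequence of Proposition~\ref{p:general-reduction} together with the observation that $A$ enters neither the $u$-equation nor the $B$-equation in \eqref{toric-GKRF}, which is exactly the decoupling and uniqueness argument you give.
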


We first notice that when $B_0=0$, \eqref{toric-GKRF-kahler} reduces to the scalar reduction of the normalized K\"ahler Ricci flow on a toric Fano manifold (cf.~\cite{Zhu})
\begin{equation}\label{complex-KRF}
 \frac{\partial}{\partial t} \phi_t(y) = \log \det \big({\rm Hess}(\phi_t(y))\big)  + 2\phi_t(y).
 \end{equation}
In this case it follows from the fundamental works (cf.~\cite{TZ-07, TZ-13, WZ, Zhu}) that the solution to (\ref{complex-KRF}) exists for all time and converges in $C^{\infty}$ after translations to a smooth function $\psi_s$ defining a K\"ahler-Ricci soliton.

To study the general case when $B \neq0$, by now restricting to the case $A = 0$ using Corollary \ref{c:Adecoupling}, we are able to give an explicit description of the pluriclosed metric ${\pom}_t$ along the generalized K\"ahler Ricci flow in the toric setting,  as a certain transformation of a flow of associated K\"ahler metrics $\kom_t=\kom_{\phi_t}= \d\d_J^c \phi_t$ on a fixed complex manifold $(M, J)$.  The precise relation is given by \eqref{omega-beta}, i.e. 
\begin{equation}\label{toric-Kahler-reduction}
\pom_t = \kom_t + e^{-2t} \left[ \kom_t \left(\Pi_{B_0}^{1,1}\right) \kom_t\right], \end{equation}
where  $\kom_t:=\kom_{\phi_t}$ is the flow of K\"ahler metrics  corresponding to a solution $\phi_t$ of \eqref{toric-GKRF-kahler} and $\Pi_{B_0}^{1,1}$ denotes the $(1,1)$-part of the real Poisson tensor  \eqref{pi-B}.
We next recapture the K\"ahler flow $\kom_t$ in terms of $\kom_0$-relative K\"ahler potentials (where $\kom_0$ can be the initial K\"ahler metric, or any other reference $\T$-invariant K\"ahler metric in $2\pi c_1(M, J)$).

\begin{prop}\label{GKRF-Kahler-geometrc} Let $(g_0, I_0, J)$ denote a toric generalized K\"ahler structure on $(M, J)$, with corresponding data $(u_0, 0, B_0)$,  and let $\kom_0 \in 2\pi c_1(M, J)$ denote an initial toric K\"ahler metric with Ricci potential $h_0$.  Suppose $\varphi_t$ satisfies
\begin{equation}\label{unnormalized}
\begin{split}
& \frac{\partial}{\partial t} \varphi_t = \log\left(\frac{\kom_{\varphi_t}^{[m]}}{\kom_0^{[m]}}\right)+ \log\left(\frac{\kom_{\varphi_t}^{[m]}}{\pom_{\varphi_t}^{[m]}}\right) + 2\varphi_t - 2h_0 + c_t, \qquad \varphi(0)= 0,\\
& \kom_{\varphi_t}= \kom_0 + dd^c_J \varphi_t, \qquad \pom_{\varphi_t} = \kom_{\varphi_t} + e^{-2t} \left(\kom_{\varphi_t} \Pi_{B_0}^{1,1} \kom_{\varphi_t}\right). \end{split}
\end{equation}
Then, $\kom_t:= \kom_{\varphi_t}$ is a flow of K\"ahler metrics in $2\pi c_1(M, J)$,  and  $\pom_t= \pom_{\varphi_t}$ an associated flow of pluriclosed Hermitian metrics $\pg_t=-\pom_t J$ on $(M, J)$, defining GK structures $(\pg_t, I_t, J)$ with corresponding symplectic form
\[F_t :=  \kom_t + e^{-2t}\kom_t \Pi_{B_0} \kom_t, \]
and  satisfying   the evolution equations
\begin{equation}\label{geometric-evolution}
\frac{\partial}{\partial t} \kom_t = -2(\rho^B_{I_t}(\pg_t) -\kom_t), \qquad \frac{\partial}{\partial t} \pom_t = -2\left(\rho^B_J(\pg_t)^{1,1} -\pom_t\right). \end{equation}
\end{prop}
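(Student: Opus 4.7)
The plan is to deduce Proposition~\ref{GKRF-Kahler-geometrc} from the scalar reduction \eqref{toric-GKRF-kahler} of Proposition~\ref{p:phireduction} and the Ricci potential identity of Lemma~\ref{l:global-potential}. The key observation is that \eqref{unnormalized} is the K\"ahler-potential rewriting of \eqref{toric-GKRF-kahler}, with the additional log-ratio term $\log(\kom_{\varphi_t}^{[m]}/\pom_{\varphi_t}^{[m]})$ encoding the $B$-deformation.

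First, I would identify $\varphi_t$ with the Legendre transform of a symplectic potential $u_t \in \mathcal{S}(\Pol,\La)$. Writing $\kom_t = \kom_0 + \d\d^c_J\varphi_t = \kom_{u_t}$ as in Proposition~\ref{p:symplectic-to-complex} and setting $\phi_t = \mathcal{T}(u_t)$, the two logarithmic terms in \eqref{unnormalized} reduce, in $J$-pluriharmonic coordinates and up to additive constants, to
\[
\log(\kom_t^{[m]}/\kom_0^{[m]}) = \log\det(\Hess(u_0)) - \log\det(\Hess(u_t)),
\]
while the volume form computation from the proof of Lemma~\ref{Ricci} gives
\[
\log(\kom_t^{[m]}/\pom_t^{[m]}) = \log\det(\Hess(u_t)) - \log\det(\Hess(u_t) + \sqrt{-1}e^{-2t}B_0).
\]
Combined with the identity $h_0 + \phi_0 = \tfrac{1}{2}\log\det(\Hess(u_0))$ from \eqref{h-u} and the substitution $\varphi_t = \phi_t - \phi_0$ modulo a time-dependent constant absorbed into $c_t$, the right-hand side of \eqref{unnormalized} collapses to the right-hand side of \eqref{toric-GKRF-kahler}. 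Hence the family $u_t$ solves \eqref{toric-GKRF}, and $(u_t, 0, e^{-2t}B_0)$ is the scalar data for a solution of NGKRF by Corollary~\ref{c:Adecoupling}.

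Given this identification, \eqref{geometric-evolution} follows quickly. Applying $\d\d^c_J$ to \eqref{unnormalized}, the constant $c_t$ is annihilated and Lemma~\ref{l:global-potential} with $A=0$ yields $\partial_t\kom_t = -2\d\d^c_J h_{u_t,e^{-2t}B_0}(\mu) + 2\kom_t = -2(\rho^B_I(\pg_t) - \kom_t)$, which is the first evolution equation. For the second, Corollary~\ref{c:Adecoupling} tells us that the associated family $(\pg_t, I_t, J)$ is precisely the NGKRF solution \eqref{e:NGKRF-2}, so $\partial_t\pom_t = -2(\rho^B_J(\pg_t))^{1,1} + 2\pom_t$ by definition of the flow. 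The positivity and $\d\d^c_J$-closedness of $\pom_t$, together with the identification of $F_t$, are then supplied by Proposition~\ref{p:toric-GK} applied to $(u_t,0,e^{-2t}B_0)$ along with the explicit formulas \eqref{non-linear-link}--\eqref{omega-beta}; the Hermitian positivity \eqref{positivity} holds at $t=0$ and persists on the interval where the flow is smooth.

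The main obstacle is the bookkeeping of the first step — matching the two volume ratios in \eqref{unnormalized} against the complex determinant $\log\det(\Hess(u_t)+\sqrt{-1}e^{-2t}B_0)$ appearing in \eqref{toric-GKRF-kahler}. This is essentially the content of \eqref{potential-computation} applied with $A=0$, but requires careful tracking of the normalization of $\phi_0$ and $h_0$ against the reference K\"ahler metric $\kom_0$ so that the time-dependent additive constant is cleanly absorbed into $c_t$.
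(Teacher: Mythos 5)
Your proposal is correct and follows essentially the same route as the paper: both proofs identify \eqref{unnormalized} with the scalar reduction \eqref{toric-GKRF-kahler} by computing the volume-form ratios $\kom_t^{[m]}/\kom_0^{[m]}$ and $\kom_t^{[m]}/\pom_t^{[m]}$ in pluriharmonic coordinates (using $h_0+\phi_0=\tfrac{1}{2}\log\det\Hess(u_0)$ to cancel the reference terms), and then deduce the geometric statements from Corollary~\ref{c:Adecoupling} together with \eqref{toric-Kahler-reduction}. Your direct derivation of $\partial_t\kom_t=-2(\rho^B_{I_t}-\kom_t)$ by applying $\d\d^c_J$ and invoking Lemma~\ref{l:global-potential} is a nice explicit rendering of a step the paper leaves implicit.
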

\begin{proof} We let $\varphi_t(y): = \phi_t(y) - \phi_0(y)$ which, by Proposition~\ref{p:symplectic-to-complex}, pulls back to  a smooth function on $M$ (still denoted by $\varphi_t$) such that  $ \kom_t = \kom_0 + \d\d^c_J \varphi_t$.  Using that  in the $(y_i, \ang_j)$ coordinates we have  (see \eqref{omega-beta})
\begin{equation*}
 \frac{\pom^{[m]}}{\kom^{[m]}} = \frac{\big(\det\big({\bf G} + \sqrt{-1}B\big)\big)\big(\det {\bf H}\big)^2}{\det {\bf H}_0}, \qquad  \frac{\kom^{[m]}}{\kom_0^{[m]}}   = \frac{\det {\bf H}}{\det {\bf H}_0},
 \end{equation*}
 where $\kom_0$ is any background toric K\"ahler metric (and ${\bf H}_0 = \kom_0(K_i, JK_j) = \Hess(\phi_0)$ is the corresponding matrix),  we can rewrite the flow \eqref{toric-GKRF-kahler}  in terms of $\varphi_t$ as \eqref{unnormalized}. The remaining claims follow from  Corollary~\ref{c:Adecoupling} and \eqref{toric-Kahler-reduction}. \end{proof}

\subsection{Evolution equations}

Here we determine the evolution of various quantities along NGKRF.  To begin we recall the Chern Laplacian and recall the explicit form for its action on $(p,0)$-forms.

\begin{defn}\label{d:ChernC} Let $(M, g, J)$ denote a Hermitian manifold, with associated Chern connection.  Let $\Delta^{C}_{\pg} : \Gamma(\Wedge^{p,q}(M)) \to \Gamma(\Wedge^{p,q}(M))$, denote the \emph{Chern Laplacian}, defined by
\[ \Delta^{C}_{\pg} \psi = \sum_{i=1}^m(\nabla^C)^2_{E_{i},\bar{E_{i}}} \psi, \]
where $E_i := e_i-\sqrt{-1}Je_i, \,  i=1, \ldots m$ for a $J$-adapted orthonormal frame $\{e_i, Je_i\}$.
\end{defn}

\begin{lemma} \label{l:ChernLap} Fix a Hermitian manifold $(M^{2n}, g, J)$ with Chern connection $\N^c$ and let $H= - d^c\omega$.  For $\psi \in \Wedge^{p,0}$, $\del \psi = 0$,  and $Z_1, \ldots Z_p \in T^{1,0}M$, one has
\begin{align*}
(\gD^C_{\pg} \psi)(Z_1, \ldots, Z_p) =\Big( \gD_g \psi  + H \star (\d\psi) + \d \left( H \star \psi  \right) - {\mathcal L}_{\theta^{\sharp}} \psi \Big)_{Z_1, \ldots, Z_p},
\end{align*}
where for a $(q+1)$-form $\psi$,  we let 
\[ (H \star \psi)_{X_1, \ldots X_q} : = \frac{1}{2}\sum_{i,k=1}^{2m} \sum_{j=1}^q (-1)^{j} H(e_i, e_k, X_j)\psi(e_i, e_k, X_1, \ldots, \hat X_j, \ldots, X_q). \]
\end{lemma}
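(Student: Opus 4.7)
The plan is to prove the identity by direct local computation in a $J$-adapted $g$-orthonormal frame, comparing the Chern Laplacian to the Hodge--de Rham Laplacian via the torsion of the Chern connection. From \eqref{Chernformula} I write $\nabla^C = \nabla + A$, where $A$ is the contorsion tensor determined by $g(A_X Y, Z) = \tfrac{1}{2}d^c\omega_J(X, JY, JZ)$; since $H = -d^c\omega_J$ in the Hermitian setting considered, $A$ is algebraically built from $H$ alone. Acting on a $(p,0)$-form $\psi$, the induced action of $A$ is a zeroth-order operator which will be the source of both the $H \star (d\psi)$ and $d(H \star \psi)$ contributions on the right-hand side.

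The main computation expands
$\Delta^C_g \psi = \sum_{i=1}^m (\nabla^C)^2_{E_i, \bar{E}_i}\psi$
using $\nabla^C = \nabla + A$. This produces the trace $\sum_i \nabla^2_{E_i, \bar E_i}\psi$ plus three correction terms: a transport term $\sum_i A_{E_i}(\nabla_{\bar E_i}\psi)$, a derivative-of-torsion term $\sum_i (\nabla_{E_i}A)_{\bar E_i}\psi$, and a quadratic $A \cdot A$ term. Using the Weitzenböck identity together with the hypotheses $\partial\psi = 0$ and $\psi \in \Lambda^{p,0}$ (so that $\bar\partial^*\psi$ vanishes trivially), the trace $\sum_i \nabla^2_{E_i,\bar E_i}\psi$ reduces to $\Delta_g \psi$ with the Ricci-type curvature terms canceling in this type/degree. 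The Lee form enters through the standard Hermitian identity expressing the difference between the Chern codifferential and the Riemannian codifferential on $(p,0)$-forms, which via Cartan's formula applied to $d\psi = \bar\partial\psi$ contributes exactly $-\mathcal{L}_{\theta^\sharp}\psi$.

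Finally I would identify the remaining torsion contributions. Using the total antisymmetry of $H$ and of $\psi$, the transport term $\sum_i A_{E_i}\nabla_{\bar E_i}\psi$ assembles into $H \star (d\psi)$, while the derivative term, after antisymmetrizing in the frame indices and using $\partial\psi=0$ to discard the $(p,0)$-part of $\nabla\psi$, reorganizes as $d(H \star \psi)$ via the algebraic definition of $\star$. The main obstacle will be the careful bookkeeping of signs and type decompositions, and in particular the verification that the curvature contributions arising from two applications of the Weitzenböck formula cancel against part of the quadratic $A\cdot A$ correction; this is where the $(p,0)$-type of $\psi$ together with the condition $\partial\psi = 0$ is essential, as it kills the mixed-type pieces of both the curvature and the torsion-squared contribution.
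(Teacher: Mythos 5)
The paper's own ``proof'' of this lemma is a single sentence declaring the computation a straightforward exercise, so there is no written argument to compare yours against. Your plan --- decompose $\nabla^C=\nabla+A$ with the contorsion $A$ determined algebraically by $H=-d^c\omega$, expand the complex trace $\sum_i(\nabla^C)^2_{E_i,\bar E_i}\psi$ over a $J$-adapted frame, and attribute the corrections respectively to $H\star(d\psi)$, $d(H\star\psi)$ and $-\mathcal{L}_{\theta^\sharp}\psi$ --- is exactly the intended direct computation, and you have correctly identified which structural ingredient produces which term on the right-hand side.

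As written, however, the proposal stops short of a proof at precisely the step that carries the content of the lemma. The right-hand side contains no curvature, so the entire burden is the cancellation you describe between (i) the commutator and Weitzenb\"ock curvature generated when converting $\sum_i\nabla^2_{E_i,\bar E_i}\psi$ (a $(1,1)$-trace in a fixed order, not the full rough Laplacian) into $\Delta_g\psi$, and (ii) the $\nabla A$ and quadratic $A\cdot A$ corrections. You assert that these cancel ``in this type/degree,'' but this is not a formal consequence of $\psi\in\Lambda^{p,0}$ and $\partial\psi=0$: the ordered complex trace differs from its transpose by a Ricci-type endomorphism of $\Lambda^{p,0}$, and one must actually see this combine with the derivative-of-torsion terms into the exact combination $d(H\star\psi)$ (note that $dH=0$ is not assumed in the lemma, so no simplification of $\nabla A$ via closedness of $H$ is available). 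A second, smaller slip: $\partial\psi=0$ controls only the antisymmetrization of the $(1,0)$-covariant derivative of $\psi$, not ``the $(p,0)$-part of $\nabla\psi$,'' so the step discarding that piece must invoke the antisymmetry of $H$ in its first two arguments rather than the hypothesis alone. To close the gap you should either carry out the frame computation in full, or observe that the paper only uses the case $p=2$, $\psi=\beta_J=\pi^{2,0}_Jb$ with $\partial\beta_J=0$ in Proposition~\ref{p:b-evolution}, and verify that case explicitly, where the analogous identity is effectively contained in the pluriclosed-flow computations of \cite{SBIPCF} cited there.
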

\begin{proof} The proof is a straightforward computation left as an exercise.
\end{proof}

\begin{lemma}\label{linearization} Let $\kom_{\phi}  = \d\d^c_J \phi$ be a toric K\"ahler metric on $(M, J)$,  described by its toric K\"ahler  potential $\phi$ on $\R^m$, associated to a toric GK structure $(\pg_{\phi}, I_{\phi}, J)$ corresponding to $A = 0, B\in \Wedge^2\tor$.  Then, for a variation $\phi_s = \phi_0 + s \dot{\phi}$ of $\phi$ one has
\begin{align*}
\left. \frac{\del}{\del s} \right|_{s=0} \log \det \left( \left({\rm Hess}(\phi_s)\right)^{-1} + \sqrt{-1}B\right)^{-1} = \Delta^{C}_{g_{\phi}} \dot{\phi}.
\end{align*} 
\end{lemma}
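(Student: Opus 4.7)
The plan is to compute both sides explicitly in the toric coordinates and then verify their equality as a matrix identity. Throughout, let $H := {\rm Hess}(\phi)$ and note that $B\in\Lambda^2\mathfrak t$ is a constant skew-symmetric matrix. Since neither side depends on $A$, we work in the normalized situation $A=0$ so that $J_\phi = \bar J_u$ is the K\"ahler complex structure with holomorphic coordinates $w_k = y_k + \sqrt{-1}\theta_k$ on $M^0$, and $\dot\phi = \dot\phi(y)$ is a $\T$-invariant function.

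For the left-hand side, a routine application of $\tfrac{d}{ds}\log\det F(s) = {\rm tr}(F(s)^{-1} F'(s))$ combined with $\tfrac{d}{ds}({\rm Hess}\,\phi_s)^{-1}\big|_{s=0} = -H^{-1}\,{\rm Hess}(\dot\phi)\,H^{-1}$ yields
\[
\left.\frac{\partial}{\partial s}\right|_{s=0} \log\det\bigl(({\rm Hess}\,\phi_s)^{-1} + \sqrt{-1}B\bigr)^{-1}
= {\rm tr}\bigl[(H^{-1} + \sqrt{-1}B)^{-1} H^{-1}\,{\rm Hess}(\dot\phi)\,H^{-1}\bigr].
\]
Using the cyclicity of the trace and the identity $H^{-1}(H^{-1}+\sqrt{-1}B)^{-1}H^{-1} = (H + \sqrt{-1}HBH)^{-1}$, this simplifies to ${\rm tr}\bigl[(H + \sqrt{-1}HBH)^{-1}\,{\rm Hess}(\dot\phi)\bigr]$.

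For the right-hand side, I will read off the components of the Hermitian metric $g_\phi$ in the complex coordinates $w_k$ directly from the formula \eqref{omega-beta} for $\omega_J = F^{1,1}$ in the $(y,\theta)$-coordinates. Writing $\omega_J = \sqrt{-1}\,g_{k\bar l}\,dw_k\wedge d\bar w_l$ and using $dy = \tfrac{1}{2}(dw + d\bar w)$ and $d\theta = \tfrac{1}{2\sqrt{-1}}(dw - d\bar w)$, together with the skew-symmetry of $HBH$, a direct conversion gives
\[
g_{k\bar l} = \tfrac12\bigl(\phi_{,kl} - \sqrt{-1}(HBH)_{kl}\bigr),
\]
so that $g^{\bar l k} = 2\bigl[(H - \sqrt{-1}HBH)^{-1}\bigr]_{lk}$. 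Since $\dot\phi$ depends only on $y$, one has $\partial_{w_k}\partial_{\bar w_l}\dot\phi = \tfrac14\dot\phi_{,kl}$, and applying the standard formula $\Delta^C f = 2g^{\bar l k}\partial_k\partial_{\bar l} f$ yields
\[
\Delta^C_{g_\phi}\dot\phi \;=\; {\rm tr}\bigl[(H - \sqrt{-1}HBH)^{-1}\,{\rm Hess}(\dot\phi)\bigr].
\]

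The final step is to observe that the two trace expressions agree. Since ${\rm Hess}(\dot\phi)$ is real and symmetric, ${\rm tr}(M\cdot{\rm Hess}(\dot\phi)) = {\rm tr}(M^T\cdot{\rm Hess}(\dot\phi))$ for any matrix $M$, and one checks that $(H + \sqrt{-1}HBH)^T = H - \sqrt{-1}HBH$ using symmetry of $H$ and skew-symmetry of $B$. Thus the left-hand side and the right-hand side coincide, completing the proof.

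The only nontrivial step is the conversion from the $(y,\theta)$-description of $\omega_J$ in \eqref{omega-beta} to the complex-coordinate components $g_{k\bar l}$; all other manipulations are elementary matrix identities. Because both sides turn out to be the trace of the same Hermitian symmetric matrix against ${\rm Hess}(\dot\phi)$, no further conceptual ingredient is needed.
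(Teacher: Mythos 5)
Your proof is correct and follows essentially the same route as the paper's: both sides are computed explicitly in toric coordinates, the left-hand side via the standard variation of $\log\det$ and the identity $H^{-1}(H^{-1}+\sqrt{-1}B)^{-1}H^{-1}=(H+\sqrt{-1}HBH)^{-1}$, and the right-hand side by inverting the metric $\pom_\phi$ read off from \eqref{omega-beta} (the paper does this in the real $(y,\ang)$ frame, you in the complex coordinates $w_k=y_k+\sqrt{-1}\ang_k$, which is the same computation in a different basis). The final matching via transposition against the symmetric matrix ${\rm Hess}(\dot\phi)$ is equivalent to the paper's observation that the skew part of $(H+\sqrt{-1}HBH)^{-1}$ contributes nothing to the trace.
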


\begin{proof} We denote by ${\bf H}:= {\rm Hess}(\phi(y)) = \big({\rm Hess}(u(x))\big)^{-1}$ and notice that 
\[\big({\bf H}^{-1} + \sqrt{-1} B\big)^{-1} = \X + \sqrt{-1} \Y\]
with $ \X =\big({\bf H}^{-1} + B{\bf H} B\big)^{-1}, \, \, \Y=-{\bf H} B \X$.
Thus, we have
\[
\begin{split}
&\delta_{\phi} \Big(\log \det \big( {\bf H}^{-1} + \sqrt{-1} B\big)\Big)(\dot{\phi}) = -{\rm tr}\Big(\big(\X + \sqrt{-1} \Y\big) {\bf H}^{-1} \dot{\bf H} {\bf H}^{-1} \Big)\\
                                                                                                               &\, = - {\rm tr} \Big({\bf H}^{-1} \X{\bf H}^{-1} {\rm Hess}(\dot{\phi})\Big) 
                                                                                                                =-{\rm tr} \Big(\big[{\bf H}[ {\bf H}^{-1} + B{\bf H}B  ] {\bf H}\big]^{-1}{\rm Hess}(\dot{\phi})\Big) \\
                                                                                                               &\, = -{\rm tr} \Big(\big[{\bf H} + {\bf H} B{\bf H}B {\bf H}\big]^{-1}{\rm Hess}(\dot{\phi})\Big).\                                                                                                    \end{split}
                                                                                                               \]
Notice that for any $\T$-invariant smooth function $f$ on $M$, viewed as a function $f(y)$ in the holomorphic coordinates $(y_j + \sqrt{-1} \ang_j)$, we have
\begin{equation*}\label{dd^c}
 \d\d^c_J f(y) = \sum_{i,j=1}^m \big({\rm Hess}(f(y))\big)_{ij} \d y_i \wedge \d\ang_j.
 \end{equation*}                                                                                                               
 whereas the inverse of $\pom_{\phi}$ is computed from  \eqref{omega-beta}  to be
 \[
 (\pom_{\phi})^{-1} = -\sum_{i,j=1}^m \tilde{\X}_{ij} \Big(\frac{\partial}{\partial y_i} \wedge \frac{\partial}{\partial \ang_j}\Big)  + \frac{1}{2}\tilde{\Y}_{ij} \Big(\frac{\partial}{\partial  y_i} \wedge \frac{\partial}{\partial y_j} + \frac{\partial}{\partial \ang_i }\wedge \frac{\partial}{\partial \ang_j}\Big)
 \]
 with $ \tilde{\X} =\big[{\bf H} + {\bf H} B{\bf H}B {\bf H}\big]^{-1}, \,  \tilde{\Y}= B{\bf H} \tilde{\X}$.
 We thus get 
 \[ \Delta^{C}_{g_{\phi}} f = - {\rm tr} \Big(\big[{\bf H} + {\bf H} B{\bf H}B {\bf H}\big]^{-1}{\rm Hess}(f(y))\Big)\]
 and the claim follows. \end{proof}
 
 \begin{lemma}\label{jeff-trick} Let $\varphi_t$ be a solution of \eqref{unnormalized}.  Then one has
 \begin{gather} \label{f:varphidotflow}
 \begin{split}
 \frac{\partial}{\partial t} \dot{\varphi}_t  &= \Delta^C_{\pg_t} \dot{\varphi}_t  + 2\dot{\varphi}_t -2|b_t|^2_{g_t}.
 \end{split}
 \end{gather}
\end{lemma}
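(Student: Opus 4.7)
The plan is to differentiate the flow equation \eqref{unnormalized} in $t$ and identify each of the resulting terms; the principal obstacle will be to recognize one of them as $-2|b_t|^2_{g_t}$. Applying $\partial_t$ to \eqref{unnormalized} gives
\[
\ddot\varphi_t = \partial_t\log\!\left(\frac{\kom_t^{[m]}}{\kom_0^{[m]}}\right) + \partial_t\log\!\left(\frac{\kom_t^{[m]}}{\pom_t^{[m]}}\right) + 2\dot\varphi_t,
\]
after choosing $c_t$ so that $\dot c_t=0$. Since $\partial_t\kom_t = \d\d^c_J\dot\varphi_t$, the standard K\"ahler identity gives $\partial_t\log(\kom_t^{[m]}/\kom_0^{[m]}) = \Delta_{\kom_t}\dot\varphi_t$, where $\Delta_{\kom_t}$ denotes the K\"ahler Laplacian of $\kom_t$.

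For the second log-ratio, I would exploit that $\pom_t$ depends on $t$ both through $\varphi_t$ and through the explicit factor $e^{-2t}$ multiplying $\Pi_{B_0}$, and apply the chain rule accordingly. The toric description of Section~\ref{s:toric-GK}, written in Legendre-dual coordinates, gives the pointwise identity $\pom_t^{[m]}/\kom_t^{[m]} = \det(I + ie^{-2t}H_tB_0)$, with $H_t = \Hess\phi_t$. Factoring $\log\det(I + ie^{-2t}H_tB_0) = \log\det H_t + \log\det(H_t^{-1} + ie^{-2t}B_0)$ and applying Lemma~\ref{linearization} to the second summand, together with the standard K\"ahler identity $\partial_\varphi\log\det H_t(\dot\varphi_t)=\Delta_{\kom_t}\dot\varphi_t$, the variation of $\log(\kom_t^{[m]}/\pom_t^{[m]})$ in $\varphi_t$ (at fixed $e^{-2t}$) equals $\Delta^C_{\pg_t}\dot\varphi_t - \Delta_{\kom_t}\dot\varphi_t$. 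This cancels the K\"ahler Laplacian contribution from the first step and leaves exactly $\Delta^C_{\pg_t}\dot\varphi_t$.

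The main step, which I expect to be the principal obstacle, is to verify that the remaining explicit $t$-derivative
\[
\partial_t\bigr|_{\varphi_t}\log\!\left(\frac{\kom_t^{[m]}}{\pom_t^{[m]}}\right) = 2ie^{-2t}\,\tr\!\bigl((I + ie^{-2t}H_tB_0)^{-1}H_tB_0\bigr)
\]
equals $-2|b_t|^2_{g_t}$. My approach is a direct pointwise computation in momentum/angular coordinates: using the explicit formula $b_t = \tfrac{1}{2}e^{-2t}\sum_{ij}[H_tB_0H_t]_{ij}(\d y_i\wedge \d y_j - \d\theta_i\wedge \d\theta_j)$, the block expression for $\pg_t$ from \eqref{e:gB}, and the standard identity $|b|^2_g = -\tfrac{1}{2}\tr((\pg^{-1}b)^2)$ for a $2$-form viewed as a skew-symmetric endomorphism, one reduces both sides to the same expression. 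The decisive structural fact is that $H_t^{1/2}B_0 H_t^{1/2}$ is skew-symmetric, so $H_tB_0$ is diagonalisable with purely imaginary eigenvalues $\pm i\mu_k$, and one then checks that both quantities equal $\sum_k 2e^{-4t}\mu_k^2/(1-e^{-4t}\mu_k^2)$. Assembling the three contributions yields the claimed evolution \eqref{f:varphidotflow}.
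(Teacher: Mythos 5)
Your proposal is correct, and its overall architecture is the same as the paper's: differentiate the scalar reduction in $t$, obtain $\Delta^C_{\pg_t}\dot{\varphi}_t$ from the variation in $\varphi$ via Lemma \ref{linearization}, keep the $2\dot{\varphi}_t$, and recognize the term produced by the explicit decay $B_t=e^{-2t}B_0$ as $-2\brs{b_t}^2_{\pg_t}$; your splitting of the two log-ratios and the cancellation of the K\"ahler Laplacians is just a repackaging of the paper's direct application of Lemma \ref{linearization} to \eqref{toric-GKRF-kahler}. The one step where you genuinely diverge is the identification of the extra term. The paper rewrites it as $2m-2\tr\big[{\rm Id}+{\bf H}_tB_t{\bf H}_tB_t\big]^{-1}$ and then uses the structural identities $\brs{F_t}^2_{\pg_t}=m+\brs{b_t}^2_{\pg_t}$ and $F_t=-2\pg_t(J+I_t)^{-1}$ together with the block formula \eqref{e:IB} for $(J+I_t)^2$, so no diagonalization or explicit computation of $\brs{b_t}^2$ is needed. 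You instead diagonalize ${\bf H}_t^{1/2}B_0{\bf H}_t^{1/2}$ and match eigenvalue sums on both sides; this works (the bookkeeping checks out, up to a harmless sign slip in the sentence asserting what ``both quantities'' equal), but it obliges you to compute $\brs{b_t}^2_{\pg_t}$ by hand from \eqref{e:gB}. The paper's identity is cleaner and is reused later (it gives $\trace_{\pom_t}\kom_t=m+\brs{b_t}^2_{\pg_t}$ in the long-time existence argument), while your version is more self-contained and makes the positivity constraint on the eigenvalues visible.
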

\begin{proof} Notice that by Proposition~\ref{p:symplectic-to-complex}, $\varphi_t(y) =\phi_t(y)- \phi_0(y)$  extends to a smooth function on $M$, thus so does $\dot{\varphi}_t = \dot \phi_t$. We take derivative with respect to $t$  in \eqref{toric-GKRF-kahler} and use Lemma~\ref{linearization} to compute
\[
\begin{split}
\frac{\partial}{\partial t} \dot{\phi}_t =&\ \Delta^{C}_{\pg_t} \dot{\phi}_t   + 2{\rm tr}\big[ B_t{\bf H}_t B_t\X_t\big] + 2\dot{\phi}_t
                                                       =\ \Delta^{C}_{\pg_t} \dot{\phi}_t   +  2m -2{\rm tr}\big[{\bf H}_t^{-1}\X_t \big]+ 2\dot{\phi}_t\\
                                                       =&\ \Delta^{C}_{\pg_t} \dot{\phi}_t   +  2m -2{\rm tr}\big[{\rm Id} +  {\bf H}_t B_t {\bf H}_t B_t\big]^{-1} + 2\dot{\phi}_t, 
\end{split}
\]
where  we have set $B_t=e^{-2t}B_0$,  and ${\bf H}_t$ and $\X_t$ are the objects defined in the proof of Lemma~\ref{linearization} with respect to $\phi_t$.

As $F_tJ = -\pg_t -b_t$, we have 
\[|F_t|^2_{\pg_t} = \tfrac{1}{2}|F_tJ|^2_{\pg_t} = m + |b_t|^2_{\pg_t}, \]
noting that,  by our convention,  the norm of a $2$-form is half the norm of the corresponding tensor.
Using that $F_t = -2{\pg}_t (J+ I_t)^{-1}$ we obtain 
\begin{equation*}
2m + 2|b_t|^2_{\pg_t} =4|(J+ I_t)^{-1}|^2_{\pg_t} = -4{\rm tr}(I_t + J)^{-2} 
\end{equation*}
whereas  by  \eqref{e:IB} (recalling that $A=0$  and ${\bf H}= {\bf \Psi}^{-1}$ in our case) we compute
\[(J+ I_t)^2 
  \sim -4  \left( \begin{array}{cc} 
 {\rm Id} + B_t{\bf H}_t B_t  {\bf H}_t & 0 \\
 0 & {\rm Id} + B_t {\bf H}_t B_t {\bf H}_t
 \end{array} \right),\]               
so that  
\[m + |b_t|^2_{\pg_t} ={\rm tr}\big[{\rm Id} + B_t {\bf H}_t B_t {\bf H}_t\big]^{-1}.\]
The claim follows. \end{proof}
 
Using Lemma \ref{l:ChernLap}, we can refine the result of Lemma \ref{evolution-b}, giving a useful evolution equation for the $(2,0)$ piece of $b$, which yields a key differential inequality.
 
 \begin{prop}\label{p:b-evolution} Let $(M^{2n}, g_t, I_t, J)$ be a solution to GKRF with symplectic-type initial data, and set $\gb_J = \pi_{\Wedge^{2,0}_J} b$.  Then
\begin{align*}
\frac{\partial}{\partial t} \gb_J =&\  \gD^C_{\pg_t} \gb_J,\qquad
\frac{\partial }{\partial t} \brs{\gb_J}^2 =\ \gD^C_{\pg_t} \brs{\gb_J}^2 - \brs{\N \gb_J} - \brs{\delb \gb_J}^2 - \brs{\gb_J \hook H^{2,1}}^2.
\end{align*}
\begin{proof} We apply Lemma~\ref{l:ChernLap} to $\beta_J$.  In this case,   $H= \d b$  and  $\d \beta = H^{2,1}$,  so that  the term  $(H\star d\beta)_{Z_1, Z_2}$ is zero. By  Lemma~\ref{symplectic-type},  the term $b \righthalfcup H$ is exact, and thus  $(\d(H \star \beta))_{Z_1, Z_2} = (d(b \righthalfcup H))_{Z_1, Z_2} =0$. We conclude that
  \[\Delta^{C}_{\pg} \beta_J = \Big(\Delta_g \beta_J - \mathcal L_{\theta_J^{\sharp}} \beta_J \Big)^{2,0}.\]
As $\Delta_g - {\mathcal L}_{\theta_J^{\sharp}}$ is a real operator, it follows from Lemma~\ref{evolution-b} that the evolution equation of $\beta_t$ under the GKRF is  
 \[ 
  \frac{\partial} {\partial t} \beta_J = \Delta^C_{g_t} \beta_J, \]
which is the formula obtained in \cite{SBIPCF}, noticing that in this context $\beta_J = \del \alpha$ and $b = \Re(\partial \alpha)$ in the notation of \cite{SBIPCF}.  The evolution of $\brs{\gb_J}^2$ then follows from the established evolution for $\gb_J$ and a Bochner formula (cf. \cite{SPCFSTB} Lemma 4.7).
\end{proof}
\end{prop}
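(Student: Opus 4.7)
The plan is to derive the evolution of $\beta_J$ by projecting the evolution of $b$ from Lemma~\ref{evolution-b} onto the $(2,0)_J$-component, and then identifying the resulting operator with $\Delta^C_g$ via Lemma~\ref{l:ChernLap}. Since the complex structure $J$ is fixed along GKRF in the $J$-fixed gauge, the projection $\pi_{\Wedge^{2,0}_J}$ commutes with $\partial_t$, so Lemma~\ref{evolution-b} immediately yields
\[
\tfrac{\partial}{\partial t} \beta_J \;=\; \pi_{\Wedge^{2,0}_J}\bigl(\Delta_g b - \mathcal{L}_{\theta^{\sharp}} b\bigr).
\]

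Next I would apply Lemma~\ref{l:ChernLap} to the $(2,0)$-form $\beta_J$. The hypothesis $\partial \beta_J = 0$ follows from the fact that $H = -\d^c_J \omega_J$ lies in $\Wedge^{2,1}_J \oplus \Wedge^{1,2}_J$ (because $\omega_J$ is of type $(1,1)$); decomposing $\d b = H$ by type then gives $\partial \beta_J = H^{3,0} = 0$ and $\bar\partial \beta_J = H^{2,1}$. The lemma expresses $\Delta^C_g \beta_J$ on a pair of $(1,0)$-vectors as the sum of four terms, two of which I would show vanish: $(H \star \d\beta_J)(Z_1, Z_2)$ collapses by type after substituting $\d\beta_J = H^{2,1}$, while $(\d(H \star \beta_J))(Z_1, Z_2)$ vanishes because $H \star \beta_J$ is itself of type $(0,1)$ --- its $(1,0)$-part would require a $(3,0)$-piece of $H$. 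Equivalently, one may invoke Lemma~\ref{symplectic-type} to observe that $b \hook H$ is exact, and then match $(2,0)$-projections. Either way, Lemma~\ref{l:ChernLap} reduces to $\Delta^C_g \beta_J = (\Delta_g \beta_J - \mathcal{L}_{\theta^{\sharp}} \beta_J)^{2,0}$, and since $\Delta_g - \mathcal{L}_{\theta^{\sharp}}$ is a real operator whose $(2,0)$-action on $b = \beta_J + \bar\beta_J$ coincides with $\Delta^C_g \beta_J$, the first evolution equation follows.

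For the second equation I would combine $\partial_t \beta_J = \Delta^C_g \beta_J$ with the Bochner--Weitzenb\"ock identity for the Chern Laplacian acting on $(2,0)$-forms on a Hermitian manifold with torsion $H$, as developed for pluriclosed flow in \cite[Lemma~4.7]{SPCFSTB}. After accounting for the simultaneous evolution of the metric under GKRF, this identity produces precisely the three reaction terms $-|\nabla \beta_J|^2$, $-|\bar\partial \beta_J|^2$, and $-|\beta_J \hook H^{2,1}|^2$ on the right-hand side. The main obstacle in this argument is the type bookkeeping of step two, particularly the vanishing of $(\d(H\star\beta_J))^{2,0}$: this genuinely exploits the symplectic-type hypothesis $b \in \Wedge^{2,0+0,2}_J$, which controls the $J$-type of $b$ and hence of $H \star \beta_J$, and would fail in a general pluriclosed background. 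The Bochner identity itself is a delicate but now-standard torsion calculation, imported intact from the pluriclosed-flow literature.
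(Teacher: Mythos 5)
Your overall route coincides with the paper's: project the evolution of $b$ from Lemma~\ref{evolution-b} onto $\Wedge^{2,0}_J$, identify the resulting operator with $\Delta^C_{\pg}$ via Lemma~\ref{l:ChernLap} by showing the two torsion terms do not contribute on pairs of $(1,0)$-vectors, and quote the Bochner formula of \cite{SPCFSTB} for $\brs{\gb_J}^2$. However, the one step where you supply your own justification is wrong. You claim that $H \star \gb_J$ is of type $(0,1)$, so that $\d(H\star\gb_J)$ has no $(2,0)$-part for free. In fact $H\star\gb_J$ is of type $(1,0)$: the sum over the orthonormal frame raises the two contracted indices of $\gb_J \in \Wedge^{2,0}_J$ with $g^{-1}$, which maps $\Wedge^{1,0}$ to $T^{0,1}$, producing an element of $\Wedge^2 T^{0,1}M$; pairing this against $H = H^{2,1}+H^{1,2}$ kills the $H^{2,1}$-piece but leaves $H^{1,2}$ with a free $(1,0)$-slot. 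Equivalently, up to sign and normalization, $H\star\gb_J = \gb_J \hook H = (b\hook H)^{1,0}$. Consequently $\del(H\star\gb_J)$ is a genuine $(2,0)$-form that does not vanish for type reasons, and your ``equivalently'' is not an equivalence: the only working argument is the one you relegate to an aside, namely that $b\hook H$ is exact by Lemma~\ref{symplectic-type}, hence $\d(b\hook H)=0$, and since the conjugate piece $\bar{\gb}_J\hook H$ is of type $(0,1)$ its differential carries no $(2,0)$-component, so $(\d(H\star\gb_J))_{Z_1,Z_2}=0$. This also corrects your closing diagnosis: the symplectic-type hypothesis enters through the exactness identity $\tfrac{1}{2}\d\log\det(I+J) = -(b\hook \d b)$, not through ``controlling the type of $H\star\gb_J$,'' which type considerations alone cannot do.

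The remaining steps are fine and match the paper: $\del\gb_J = H^{3,0} = 0$ and $\delb\gb_J = H^{2,1}$ follow from the type of $\d\omega_J$; the vanishing of $(H\star \d\gb_J)(Z_1,Z_2)$ holds (though note it uses the symmetry of the frame contraction together with type reduction, not type alone --- the two antisymmetrized terms reduce to identical $(1,1)$-pairings and cancel); the reality of $\Delta_g - \cL_{\theta_J^{\sharp}}$ lets you pass from the equation for $b$ to the equation for its $(2,0)$-projection; and the norm evolution is correctly deferred to \cite[Lemma~4.7]{SPCFSTB}.
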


\subsection{Long-time existence}

In this subsection we give the proof of Theorem \ref{t:mainthm}.  We first derive an estimate for the tensor $b$ which holds without any symmetry hypotheses.  Next we derive two key a priori estimates for the potential $\varphi$ in the toric setting.  Using these and further maximum principle arguments we prove the long time existence.

\begin{prop}\label{p:basic-estimate} Let $(M^{2n}, g_t, I_t, J)$ be a solution to GKRF with symplectic-type initial data defined on $[0,T)$.  Then
\begin{align*}
\sup_{M \times [0,T)} \brs{b}^2 \leq \sup_{M \times \{0\}} \brs{b}^2.
\end{align*}
\end{prop}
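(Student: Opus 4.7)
The plan is to deduce the bound directly from the parabolic maximum principle applied to the evolution equation for $|\beta_J|^2$ established in Proposition \ref{p:b-evolution}, after first identifying $|b|^2$ with a constant multiple of $|\beta_J|^2$.

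The first step is the decomposition. By the symplectic-type hypothesis one has $b \in \Wedge^{2,0+0,2}_J$, and since $b$ is real this gives $b = \beta_J + \overline{\beta_J}$. Consequently $|b|^2_{g_t}$ and $|\beta_J|^2_{g_t}$ are pointwise proportional by a fixed positive constant, depending only on the norm convention, throughout the flow.

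The second step is to invoke Proposition \ref{p:b-evolution}, which yields
\[
\left(\frac{\partial}{\partial t} - \gD^C_{g_t}\right)|\beta_J|^2 \;=\; -|\N \beta_J|^2 - |\bpart \beta_J|^2 - |\beta_J \hook H^{2,1}|^2 \;\leq\; 0.
\]
Multiplying by the constant from the previous step, the same non-positive inequality holds with $|\beta_J|^2$ replaced by $|b|^2$.

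The third step is the parabolic maximum principle for the operator $\partial_t - \gD^C_{g_t}$. The Chern Laplacian acting on a scalar function differs from the Riemannian Laplacian only by a lower-order term involving the Lee form $\theta$, so at an interior spatial maximum (where the gradient vanishes) one has $\gD^C_{g_t} f \leq 0$; combined with the differential inequality above, together with a standard $f \rightsquigarrow f + \varepsilon t$ perturbation if one wishes to avoid touching arguments, this yields $\sup_M |b|^2(\cdot, t) \leq \sup_M |b|^2(\cdot, 0)$ for every $t$ in the interval of existence.

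No serious obstacle is anticipated: the essential work has already been done in Proposition \ref{p:b-evolution}, where the sign of every reaction term is exploited via the Bochner formula together with Lemma \ref{symplectic-type}. The only point warranting care is verifying that $\gD^C_{g_t}$ satisfies the scalar maximum principle, but this is immediate since its principal symbol agrees with that of $\gD_{g_t}$ and its first-order correction vanishes at critical points of the test function.
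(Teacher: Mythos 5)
Your proposal is correct and follows essentially the same route as the paper: the paper's proof is precisely to apply the maximum principle to the evolution equation for $\brs{\gb_J}^2$ from Proposition \ref{p:b-evolution}, noting that $b$ is the real part of $\gb_J$ so the two norms differ by a fixed constant. Your additional remark that the Chern Laplacian obeys the scalar maximum principle because it differs from $\gD_{g_t}$ only by a first-order term is a correct and worthwhile clarification, but it is not a departure from the paper's argument.
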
 
\begin{proof}
This follows by applying the maximum principle to the second relation in Proposition~\ref{p:b-evolution}, noting that $b =\Re(\beta_J)$.
\end{proof}

\begin{prop}\label{p:basic-estimate2} Let $\varphi_t$ denote a solution to (\ref{unnormalized}) on a toric Fano manifold.  There exists a constant $C > 0$ so that
\begin{align*}
\sup_{M \times \{t\}} \left( \brs{\varphi} + \brs{\dot{\varphi}} \right) \leq C e^{2t}.
\end{align*}
\end{prop}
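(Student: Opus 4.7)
The plan is to combine the evolution equation for $\dot\varphi_t$ from Lemma \ref{jeff-trick} with the uniform bound on $|b|^2$ from Proposition \ref{p:basic-estimate}, then extract an exponential bound via the parabolic maximum principle, and integrate in time to bound $\varphi_t$. Everything is essentially driven by the one-variable comparison ODE $f' = 2f$.

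First I would note that Proposition \ref{p:basic-estimate}, applied to the unnormalized GKRF obtained from (\ref{e:NGKRF}) by the standard time-reparametrization of Proposition \ref{p:NGKRFPoisson} (the decay $B_t = e^{-2t}B_0$ along the normalized flow only improves the bound), yields a uniform pointwise estimate $|b_t|^2_{g_t} \le C_1$ on $M \times [0,T)$. Moreover, by Proposition \ref{p:symplectic-to-complex} the initial potential $\varphi_0 \equiv 0$ and $\dot\varphi_0$ descend to smooth functions on the compact manifold $M$, so $\dot\varphi_0$ is uniformly bounded, say $|\dot\varphi_0| \le C_2$.

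Next I would apply the maximum principle to the evolution equation from Lemma \ref{jeff-trick}:
\[
\frac{\partial}{\partial t} \dot\varphi_t = \Delta^C_{g_t} \dot\varphi_t + 2\dot\varphi_t - 2|b_t|^2_{g_t}.
\]
At a spatial maximum $p_t$ of $\dot\varphi_t$, Definition \ref{d:ChernC} reduces (at critical points of a function the connection terms drop out) to a non-positive elliptic second-order expression, so $\Delta^C_{g_t}\dot\varphi_t(p_t) \le 0$; combined with $-2|b|^2 \le 0$ one obtains $\tfrac{d}{dt}\max_M \dot\varphi_t \le 2\max_M \dot\varphi_t$, and Gronwall gives $\max_M \dot\varphi_t \le C_2 e^{2t}$. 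At a spatial minimum one uses the pointwise bound $|b|^2 \le C_1$ to get $\tfrac{d}{dt}\min_M\dot\varphi_t \ge 2\min_M\dot\varphi_t - 2C_1$, whence $\min_M\dot\varphi_t \ge -(C_2 + C_1) e^{2t}$. Thus $|\dot\varphi_t|_{C^0(M)} \le C e^{2t}$.

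Finally, since $\varphi_0 \equiv 0$, integrating in time gives
\[
|\varphi_t(x)| \le \int_0^t |\dot\varphi_s(x)|\, ds \le C \int_0^t e^{2s}\, ds \le C'' e^{2t},
\]
which combined with the previous bound on $\dot\varphi_t$ yields the claimed estimate. The only subtle structural point is the sign asymmetry in the role of $|b|^2$: for the upper bound on $\dot\varphi$ the term $-2|b|^2$ has the favorable sign and may simply be dropped, but for the lower bound one genuinely needs the pointwise estimate on $|b|^2$ supplied by Proposition \ref{p:basic-estimate}, which is itself a consequence of the parabolic structure of $\beta_J$ under the flow. This is the main (and essentially the only) geometric input; the rest is a direct Gronwall argument.
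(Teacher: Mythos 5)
Your proposal is correct and follows essentially the same route as the paper: the paper likewise applies the maximum principle to the evolution equation (\ref{f:varphidotflow}) for $\dot\varphi$, dropping the $-2|b|^2$ term for the upper bound and invoking Proposition \ref{p:basic-estimate} for the lower bound, then integrates in time to control $\varphi$. Your explicit remark on the sign asymmetry of the $|b|^2$ term is exactly the point the paper's (terser) proof relies on.
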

\begin{proof} 
Applying the maximum principle directly to (\ref{f:varphidotflow}) yields the upper bound $\dot{\varphi} \leq C e^{2t}$.  By Proposition~\ref{p:basic-estimate}), we have  $|b_t|^2_{\pg_t} \leq C$.  Using this and applying the maximum principle to (\ref{f:varphidotflow}) yield the lower bound $\dot{\varphi} \geq - C e^{2t}$.  Using the estimate of $\dot{\varphi}$ the bound for $\varphi$ follows by integration in time.
\end{proof}

\begin{proof}[Proof of Theorem \ref{t:mainthm}] By Proposition \ref{p:toric-GK} the given invariant data is equivalent to a triple $(u_0, A_0, B_0)$.  By Corollary \ref{c:Adecoupling} the solution to NGKRF is described by a triple $(u_t, 0, e^{-2t} B_0)$, where $u_t$ is a solution to (\ref{toric-GKRF}).  Taking Legendre transform, by Proposition \ref{GKRF-Kahler-geometrc} this solution is equivalently described by $\varphi_t$ which solves (\ref{unnormalized}).  We will prove the long time existence of the flow $\varphi_t$ which in turn proves the long time existence of (\ref{e:NGKRF}) by the discussion above.

Equation (\ref{unnormalized}) is strictly parabolic for $\varphi$, thus the short-time existence is guaranteed by general theory.  To establish global existence, we first aim to establish uniform parabolicity of the equation, i.e. uniform equivalence of the time-varying metrics.  To obtain the upper bound 
we first recall that by \cite[Lemma~6.9]{AS}, we have
\begin{gather} \label{f:LTE10}
\bigg(\frac{\partial}{\partial t} - \Delta^C_{{\pg}_t}\bigg) \log \trace_{\kom_0} \pom_t \leq |T^c_t|^2_{\pg_t} + C\trace_{\pom_t}(\kom_0),\end{gather}
where $T^c$ is the torsion of the Chern connection.  To control the two terms on the right hand side of this inequality, we will augment our test function with two auxiliary terms.  We first observe that by Proposition \ref{p:b-evolution}, using that $db = H$ and dropping some negative terms we have
\begin{gather} \label{f:LTE20}
\left( \frac{\del}{\del t} - \gD^C_{\pg_t} \right) \brs{b_t}^2_{\pg_t} \leq - \brs{T^c}^2_{\pg_t}.
\end{gather}
Furthermore, observe that
\begin{gather} \label{f:LTE30}
\begin{split} 
\bigg(\frac{\partial}{\partial t} - \Delta^C_{\pg_t}\bigg)\varphi =&\ \dot{\varphi} - \trace_{\pom_t} \d \d^c_J \varphi
=\ \dot{\varphi} - \trace_{\pom_t} \kom_t + \trace_{\pom_t} \kom_{0}.
\end{split}
\end{gather}
Notice that
\begin{equation*}
\trace_{\pom_t}(\kom_t) =\trace\big[{\rm Id} + {\bf H}_t B_t {\bf H}_t B_t\big]^{-1} = m + |b_t|_{\pg_t}^2 \leq C,
\end{equation*}
where the last inequality follows from Proposition \ref{p:basic-estimate}.  Using this together with Proposition \ref{p:basic-estimate2} in (\ref{f:LTE30}) yields
\begin{gather} \label{f:LTE35}
-C + \tr_{\pom_t} \kom_0 \leq \left( \frac{\del}{\del t} - \gD_{\pg_t}^C \right) \varphi \leq C + \tr_{\pom_t} \kom_0.
\end{gather}
Now set
 \[ \Psi_1:= \log \trace_{\kom_0} \pom_t + A\Big(|b_t|^2_{\pg_t} + \varphi \Big). \] 
For $A$ chosen sufficiently large, it follows from (\ref{f:LTE10}), (\ref{f:LTE20}), and (\ref{f:LTE35}) that
 \[\bigg(\frac{\partial}{\partial t} - \Delta^C_{\pg_t}\bigg) \Psi_1 \leq CA.\]
 Applying the maximum principle, and using that $\brs{\varphi}$ is already bounded by Proposition \ref{p:basic-estimate2} we conclude that for any $T > 0$ there is a constant $C$ so that
 \[ \sup_{M \times [0,T]} \trace_{\kom_0} \pom_t \leq C. \]
To obtain the lower bound we get a lower bound for the volume form.  First, by \cite[Lemma~6.7]{AS}, we have
\begin{gather} \label{f:LTE40}
\left( \frac{\del}{\del t} - \gD^C_{\pg_t} \right) \log \frac{\pom_t^m}{\kom_0^m} \geq - C \tr_{\pom_t} \kom_0.
\end{gather}
Now set
\begin{gather*}
\Psi_2 = \log \frac{\pom_t^m}{\kom_0^m} + A \varphi.
\end{gather*}
By (\ref{f:LTE35}) and (\ref{f:LTE40}) we obtain
\begin{gather*}
\left( \frac{\del}{\del t} - \gD^C_{\pg_t} \right) \Psi_2 \geq - C.
\end{gather*}
Applying the maximum principle establishes the lower bound for the determinant.  It follows that $C^{-1}  g_0 \leq \pg_t \leq C g_0$ on any finite time interval.  As $|b_t|^2_{\pg_t}$ is bounded, we can now apply \cite[Theorem~1.2]{JS} to obtain the higher order regularity and hence the long time existence.
\end{proof}

\section{Monotone functionals and behavior at infinity} \label{s:MF}

In this section we analyze the behavior of NGKRF at infinity.  First we derive an extension of Perelman's shrinker entropy to the GKRF (without toric symmetry), and use it to derive the uniform $\gk$-noncollapsing estimate and convergence of nonsingular solutions of Theorem \ref{t:genconv}.  Next we derive an extension of the Mabuchi $K$-energy to the GK setting, and show that it is monotone along the GKRF with toric symmetry.  This monotonicity is the key point behind the convergence statements of Theorem \ref{t:convthm}.

\subsection{Entropy monotonicity}

In this subsection we observe that a modification of Perelman's entropy functional \cite{Perelman} is monotone along generalized K\"ahler-Ricci flow for structures of symplectic type.  Our discussion is a direct adaptation of \cite[Chapter 6]{GRFBook}, (cf. also \cite{ST3}).  There the monotonicity formulae and applications are contingent upon a further monotone quantity dubbed a torsion-bounding subsolution.  In the setting of generalized K\"ahler structures of symplectic type the torsion potential $b$ plays this role, and we make this explicit below

\begin{defn} \label{d:shrinkingentropy} Let $(M^{2m}, g, I, J)$ be a generalized K\"ahler structure of symplectic type.  
Fix $f\in C^\infty(M,\R)$ and $\tau>0$.  The \emph{shrinker entropy} associated to this data is
 \begin{align*} 
\WW(g,H,f,\tau)  := \int_M \left[ \tau \left( \brs{\N f}^2 + R - 
\frac{1}{12}
\brs{H}^2 \right) - \brs{b}^2 + f - 2m \right] (4\pi\tau)^{-m}e^{-f} dV.
\end{align*}
Furthermore, let
\begin{gather*}
\mu \left(g, H, \tau \right) := \inf_{\left\{ f\ | \int_M (4 \pi \tau)^{-m}e^{-f} dV = 1 
\right\} } \WW
\left(g, H, f, \tau \right),
\end{gather*}
\end{defn}

\begin{defn} \label{d:conjugateheat} Let $(M^{2m}, g_t, I_t, J)$ be a solution to GKRF with symplectic-type initial data.  The associated \emph{conjugate heat operator} is
\begin{align*}
\square^* = -\frac{\del}{\del t} - \gD_{g_t} - \tfrac{1}{2} \tr_g \dot{g}
\end{align*}
\end{defn}

\begin{prop} \label{modifiedmono-1} Let $(M^{2m}, g_t, I_t, J)$ be a solution to generalized K\"ahler-Ricci flow on a compact manifold with symplectic-type initial data.  Fix some $\tau^* > 0$ and set $\tau(t) = \tau^* - t$.  Let $u_t$ denote a solution of the conjugate heat equation $\square^* u = 0$, and define $f$ via $u = \frac{e^{-f}}{(4 \pi \tau)^{m}}$.
\begin{align*}
\frac{d}{d t} & \WW (g,H,f,\tau)
=& \int_M \left[ 2 \tau \brs{\Rc - \frac{1}{4} H^2
+ \N^2 f - \frac{1}{2 \tau}
g}^2 +\frac{\tau}{6} \brs{d^* H + i_{\N f} H}^2 + \frac{5}{6} \brs{H}^2
\right]
(4 \pi \tau)^{-m} e^{-f} dV.
\end{align*}
\begin{proof} This follows immediately from (\cite{GRFBook} Proposition 6.26), noting that $\brs{b}^2$ is a `torsion-bounding subsolution', which follows from Proposition \ref{p:b-evolution} and the fact that $\brs{\delb \gb_J}^2 = \brs{T^c}^2 = \brs{H}^2$ by Definition \ref{Bismutdef}.
\end{proof}
\end{prop}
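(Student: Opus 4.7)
The plan is to reduce the claim to the general entropy monotonicity formula for generalized Ricci flow established in \cite[Proposition 6.26]{GRFBook}. That abstract theorem yields an identity of exactly the shape asserted here, modulo an input hypothesis: one must exhibit a nonnegative \emph{torsion-bounding subsolution}, i.e.\ a function $E$ whose parabolic evolution produces a pointwise term dominating $-\brs{H}^2$. I will show that $E := \brs{b}^2$ plays this role in the symplectic-type setting.

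To verify the subsolution property, I would invoke Proposition \ref{p:b-evolution}, which gives
\begin{equation*}
\frac{\partial}{\partial t}\brs{\gb_J}^2 = \gD^C_{\pg_t}\brs{\gb_J}^2 - \brs{\N \gb_J}^2 - \brs{\delb \gb_J}^2 - \brs{\gb_J \hook H^{2,1}}^2,
\end{equation*}
and then transfer this to $\brs{b}^2$ via the relation $b = \gb_J + \bar{\gb_J}$, which holds because $b \in \Wedge^{2,0+0,2}_J$ (Definition \ref{d:sympGK}). The key identification $\brs{\delb \gb_J}^2 = \brs{H}^2$ then follows from the type decomposition of $H = db$: integrability of $J$ kills the $(3,0)$ and $(0,3)$ components, so $H$ is captured entirely by $\delb \gb_J + \del \bar{\gb_J}$, matching the Chern torsion identity of Definition \ref{Bismutdef}. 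One further needs Lemma \ref{l:ChernLap} to compare $\gD^C_{\pg_t}$ with the Riemannian Laplacian up to a Lee-form transport, which is precisely the shape required by the hypothesis in \cite{GRFBook}.

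Once the subsolution property is in hand, the conclusion is a direct substitution into \cite[Proposition 6.26]{GRFBook}, since that reference already packages the contributions of the evolving metric, torsion, and conjugate heat kernel into the stated sum-of-squares form. The main place requiring care, which I would treat as a bookkeeping step rather than a genuine analytic difficulty, is reconciling coefficients: the $-\tfrac{1}{12}\brs{H}^2$ present in the definition of $\WW$ combines with the contribution supplied by the $\brs{b}^2$ subsolution inequality to produce the final coefficient $\tfrac{5}{6}$ on $\brs{H}^2$ in the monotonicity formula. Apart from this accounting, no additional estimate is required.
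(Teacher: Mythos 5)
Your proposal follows essentially the same route as the paper: both reduce the statement to \cite[Proposition 6.26]{GRFBook} by exhibiting $\brs{b}^2$ as a torsion-bounding subsolution, using the evolution equation of Proposition \ref{p:b-evolution} together with the identification $\brs{\delb \gb_J}^2 = \brs{T^c}^2 = \brs{H}^2$. Your added remarks on transferring from $\gb_J$ to $b = \gb_J + \bar{\gb}_J$, on comparing $\gD^C_{\pg_t}$ with the Riemannian Laplacian via Lemma \ref{l:ChernLap}, and on the coefficient bookkeeping for $\tfrac{5}{6}\brs{H}^2$ are all consistent with what the paper leaves implicit.
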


\begin{rmk} Proposition \ref{modifiedmono-1} shows that the only self-similar solutions of the normalized GKRF with symplectic-type initial data are actually K\"ahler-Ricci solitons.  In contrast to the shrinking case, there exist non-K\"ahler steady solitons for GKRF \cite{Streetssolitons, SU1, SU2}.
\end{rmk}

\begin{proof}[Proof of Theorem \ref{t:genconv}] Using the entropy monotonicity of Proposition \ref{modifiedmono-1} the uniform $\gk$-noncollapsing estimate follows from Perelman's original argument (\cite{Perelman}, cf. \cite{GRFBook} Theorem 6.29).  Now assume we have a solution to the normalized flow on $[0,\infty)$ which satisfies a uniform curvature estimate.  By the uniform $\gk$-noncollapsing it follows that the volumes of unit balls are uniformly bounded below along the flow, and there is a uniform lower bound on the injectivity radius.  Since the volume form is bounded above pointwise by $F_t^m$ by (\ref{volume}), the total volume is bounded above by $\int_M F_t^m = (2 \pi c_1(M, J))^m$.  It follows that there is a uniform upper bound for the diameter along the flow.  With these estimates in place, it follows from Cheeger-Gromov compactness theory (cf. \cite{HamiltonCompactness} Theorem 2.3 for our simplified case) that for any sequence $\{t_j\} \to \infty$ there exists a subsequence, still denoted $\{t_j\}$, such that $(M, g_{t_j})$ converges in Cheeger-Gromov sense to a smooth limiting Riemannian manifold $(M, g_{\infty})$, where the limiting manifold is still $M$ due to the diameter upper bound.  Also, it follows from (\cite{StreetsCRF}, \cite{GRFBook} Ch. 5) that $H$ and all its covariant derivatives are uniformly bounded along the flow, and thus by choosing a further subsequence $H_{t_j}$ converges to $H_{\infty}$.  

We claim that by choosing a further subsequence the GK structure also converges, which requires establishing $C^{\infty}$ estimates for $I$ and $J$.  First, the $g$-norms of $I$ and $J$ are uniformly bounded since $g$ is compatible with both.  By a standard argument using that $\N^{B,I}$ preserves $I$, and that the coefficients of this connection are already converging by the discussion above, it follows that there are uniform $C^{\infty}$ estimates for $I$, and the same argument using $\N^{B,J}$ yields estimates for $J$.  It follows that we can choose a further subsequence such that $(M, g_{t_j}, I_{t_j}, J)$ converges in Cheeger-Gromov sense to a limiting GK manifold $(M, g_{\infty}, I_{\infty}, J_{\infty})$. By the uniform upper bound of $|b_t|^2_{g_t}$ established in Proposition~\ref{p:basic-estimate},  $|F_t|^2_{g_t}=m + |b_t|^2_{g_t}$ is uniformly bounded. As $F_t =-2g_t(I_t+ J_t)^{-1}$, it follows that $\left|\det(I_t+J_t)\right|$ is uniformly bounded from below by a positive constant, showing that $(g_{\infty}, I_{\infty}, J_{\infty})$ is symplectic type with corresponding symplectic form $F_{\infty}=-2g_{\infty}(I_{\infty}+ J_{\infty})^{-1}$.

Now note that, by rescaling, the result of Proposition \ref{modifiedmono-1} holds for the normalized flow, setting $\tau \equiv \tfrac{1}{2}$.  This also implies that the corresponding functional $\mu$ is monotone increasing.  Since the solution is nonsingular, there are uniform bounds on $\mu$ and all of its derivatives.  It thus follows that for an arbitrary sequences of times $\{t_j\} \to \infty$ one has
\begin{align*}
\lim_{j \to \infty} \brs{\frac{d}{dt} \mu(g_{t_j}, H_{t_j})} = 0.
\end{align*}
For such a sequence $\{t_j\}$, let $(M, g_{\infty}, I_{\infty}, J_{\infty})$ denote the GK limit constructed as above.  It follows from the entropy monotonicity formula that the limiting torsion $H_{\infty}$ must vanish, and furthermore $g_{\infty}$ is a shrinking soliton.  Since $H_{\infty}$ vanishes, the pairs $(g_{\infty}, I_{\infty})$ and $(g_{\infty}, J_{\infty})$ are both K\"ahler, hence the symplectic $2$-form  $F_{\infty}$ taming  $I_{\infty}$ and $J_{\infty}$  is parallel with respect to $g_{\infty}$, and hence must be a $(1, 1)$-form by the Fano condition of $(M, J_{\infty})$. It then follows that we must have $I_{\infty} = J_{\infty}$.
\end{proof}

\begin{rmk} In the toric setting, it is natural to expect that by adapting the ideas of \cite{Zhu} the convergence can be improved to show that $J_\infty = J$ and that the scalar potentials $\varphi$ converge.
\end{rmk}

\subsection{The extended Mabuchi functional}

In this section we give an extension of the Mabuchi functional to the setting of GK structures of symplectic type on a K\"ahler background.  To begin we first observe that it is possible to canonically associate a K\"ahler metric to every symplectic-type GK structure on a K\"ahler manifold.  First, using Hodge decomposition, the deRham class $\ga = [F]$ defines an element $\kal \in H^{1,1}(M, \R)$.  Using that $F$ tames $J$ and a result of Demailly-Paun~\cite{DP},  it follows that $\kal$ is a K\"ahler class of $(M, J)$. By Yau's theorem~\cite{YauCC}, we can find a unique K\"ahler metric $\kom \in \kal$ with the property
\begin{equation}\label{defining}
\kom^{[m]} = c F^{[m]}, \qquad    c =\frac{\kal^{[m]}}{[\ga]^{[m]}}.
\end{equation}

\begin{defn}\label{d:Kahler-reduction} Given $(M, J)$ K\"ahler and $\ga \in H^2(M, \mathbb R)$, let ${\bf GK}(M, J, \ga)$ be the space of symplectic-type GK structures $(g, I, J)$ such that $[F] = \alpha$.  Given $(g, I, J) \in {\bf GK}(M,J,\ga)$, the \emph{K\"ahler reduction} is the unique K\"ahler metric $\kom$ satisfying \eqref{defining}.
\end{defn}

\begin{rmk} K\"ahler reductions exist in smooth families along a solution to GKRF of symplectic type.  In general it might not be possible to derive an explicit formula for the evolution of $\kom$, but in the toric Fano case this is what is achieved in Proposition \ref{GKRF-Kahler-geometrc}.
\end{rmk}

We now recall the definitions  of the Aubin-Mabuchi functionals $\I$ and $\I^{\rho_0}$ which are defined on the space of $\kom_0$-relative K\"ahler  potentials $\varphi$ by
 \begin{equation}\label{Aubin-Mabuchi}
 \begin{split}
 (\delta_{\varphi} \I ) (\dot \varphi) &=\int_M \dot \varphi \kom_{\varphi}^{[m]},  \hspace{1.2cm} \qquad \I(0)=0, \\
  (\delta_{\varphi} \I^{\rho_0} ) (\dot \varphi)  &=\int_M \dot \varphi \rho_0 \wedge \kom_{\varphi}^{[m-1]},  \qquad \I^{\rho_0}(0)=0
  \end{split} \end{equation}
where $\rho_0$ is the Ricci form of $\kom_0$. As $\rho_0 \in 2\pi c_1(M, J)$, 
letting  $a:= 4\pi \frac{c_1(M, J) \cdot [\kal]^{[m-1]}}{[\kal]^{[m]}}$ the expression
$a \I(\varphi) - 2\I^{\rho_0}(\varphi)$  does not change if we add to $\varphi$ a real constant, i.e. it introduces a functional, denoted by $\left(a\I -2\I^{\rho_0}\right)(\kom_{\varphi})$, acting on the K\"ahler metrics $\kom_{\varphi} \in \kal$.

\begin{defn}\label{d:MabuchiGK} Fix $(M, J)$ K\"ahler and $\ga \in H^2(M, \mathbb R)$ such that $\bar{\ga} \in H^{1,1}(M, \mathbb R)$ is a K\"ahler class with representative $\kom_0 \in \bar{\ga}$.  Fix $(g, I, J) \in {\bf GK}(M,J,\ga)$ and let $\kom_{\varphi}$ denote its K\"ahler reduction.  Define the \emph{extended Mabuchi energy} by
 \begin{equation}\label{Mabuchi-CT}
 \begin{split}
 \M(F) &:= \left(\frac{\kal^{m}}{[\ga]^{m}}\right)\int_M \left[ \log\left(\frac{F^{[m]}}{\kom_0^{[m]}}\right) + \log\left(\frac{F^{[m]}}{\omega_J^{[m]}}\right)\right]F^{[m]}  + \left(a \I- 2\I^{\rho_0}\right)(\kom_{\varphi}),  \\
 a &:= 4\pi \frac{c_1(M, J) \cdot \kal^{[m-1]}}{\kal^{[m]}}.
 \end{split}
 \end{equation}
\end{defn}

\begin{rem}\label{M-inequality} In the case when $\alpha = \kal$ is a K\"ahler class, one has that $\kom_{\varphi}^{[m]}=F^{[m]} \ge \omega_J^{[m]}$.  This implies the basic inequality
\begin{equation}\label{M-tames}
\M(F) \ge \M(\kom_{\varphi}),
\end{equation}
 where $\M(\kom_{\varphi})$ is the usual Mabuchi energy of the K\"ahler metric $\kom_{\varphi} \in \alpha$, see e.g. \cite{CT}.
 \end{rem}

\subsection{The extended Mabuchi energy in the toric case}

On a compact toric K\"ahler manifold  $(M, J, \Fo, \T)$ with Delzant polytope $(\Pol, {\La})$, 
Donaldson~\cite{donaldsonJDG-02} found concise expressions for the Futaki invariant and the Mabuchi energy in terms of $(\Pol, \La)$ and the space $\cS(\Pol, \La)$. We use this point of view in order to express the extended Mabuchi energy as a functional acting on the data $(u, A, B)$ associated to $(\Pol, \La)$.  Following~\cite{donaldsonJDG-02}, we introduce a linear functional defined on the space of continuous functions on $\Pol$  by
\begin{equation}\label{Futaki}
{\bf F} (f) :=  - a\int_{\Pol} f(x)\d x  + 2 \int_{\partial \Pol} f(x) \d\sigma_{\La}, \qquad a:= 2\frac{\Vol\left({\partial \Pol}, \d\sigma_{\La}\right)}{\Vol(\Pol, \d x)},
                                              \end{equation}
where,  in the above expression,   $\d x$ is a fixed Lebesgue measure on $\tor^*$ (associated with a chosen basis of $\tor$) and $\d\sigma_{\La}$ is the measure induced by $\d x$ and the inward normal $\d L_j$ on each facet $\Pol_j \subset \partial \Pol$. If we take $f(x)= \langle \xi, x \rangle + a$ to be affine-linear, \eqref{Futaki} computes, up to a dimensional multiplicative constant,  the Futaki invariant of the holomorphic vector $JK_{\xi}$  on $(M, J,  [\Fo])$.

\begin{lemma}\label{Mabuchi-toric}  Let $(g, I, J)$ be a toric GK structure, corresponding to the data $(u, A, B)$ with respect to $\Fo$. Then 
\[\frac{1}{(2\pi)^m}\M(F)= {\bf F}(u)  - \int_{\Pol} \log \det \Big({\rm Hess}(u) + \sqrt{-1}B\big) \d x  + \int_{\Pol} \log \det \Big({\rm Hess}(u_0)\big) \d x,  \]
where $u_0$  is  the symplectic potential of a background toric K\"ahler metric. \end{lemma}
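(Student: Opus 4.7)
The plan is to evaluate the two summands of $\M(F)$ in \eqref{Mabuchi-CT} separately as integrals over the Delzant polytope, using the explicit formulae of Section~\ref{s:toric-GK} together with Donaldson's toric reduction of the Aubin--Mabuchi functionals.

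First I will reduce to the case $A=0$. The right-hand side is manifestly independent of $A$. For the left-hand side, equation \eqref{potential-computation} from the proof of Lemma~\ref{Ricci} shows that the bracketed integrand of the first summand of $\M(F)$ equals $-\log\det(\Hess u+\sqrt{-1}B)$ up to $\log(v_\Theta/\kom_0^{[m]})$, a quantity determined by the $\T$-action on $K_M^{-1}$ and hence $A$-independent; the K\"ahler reduction $\kom_\varphi$ satisfies $\kom_\varphi^{[m]}=cF^{[m]}$ with $[\kom_\varphi]=\kal$, and all of these data are $A$-independent, so the second summand is also $A$-independent. The reduction is then justified by Corollary~\ref{c:Adecoupling}.

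Assuming $A=0$, the complex structure is $J=J_u$, and via Proposition~\ref{p:symplectic-to-complex} I transfer the computation to $(M,J_c)$ using the Legendre potential $\phi=\mathcal{T}(u)$. Formula \eqref{omega-beta} writes $F=\kom+\kom\Pi\kom$ with $\kom=\d\d^c\phi$, and a direct wedge-product inspection gives $F^{[m]}=\kom^{[m]}$: every term of $F$ beyond $\kom$ contributes a $\d y\wedge\d y$-factor, which prevents any $2m$-form with the required balance of $\d\theta$-factors. In particular $\kom_\varphi=\kom=\d\d^c\phi$. Using $F^{[m]}/\omega_J^{[m]}=\det(\Hess u)/\det(\Hess u+\sqrt{-1}B)$ from the proof of Lemma~\ref{Ricci} (with $\mathbf\Psi=\Hess u$) together with $F^{[m]}/\kom_0^{[m]}=\det(\Hess u_0)/\det(\Hess u)$ from the Legendre identity $\det(\Hess_y\phi)=(\det\Hess u)^{-1}$, the sum of the two logarithms telescopes to $\log\det(\Hess u_0)-\log\det(\Hess u+\sqrt{-1}B)$. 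Integrating in the $(\mu,\theta)$ coordinates with $F^{[m]}=\Fo^{[m]}$, and using that the $\theta$-fibre contributes $(2\pi)^m$, yields
\[
\int_M\bigl[\log(F^{[m]}/\kom_0^{[m]})+\log(F^{[m]}/\omega_J^{[m]})\bigr]F^{[m]}=(2\pi)^m\int_\Pol\bigl[\log\det\Hess u_0-\log\det(\Hess u+\sqrt{-1}B)\bigr]\d x.
\]

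For the second summand $(a\I-2\I^{\rho_0})(\kom_\varphi)$, evaluated on the toric K\"ahler metric $\kom_u$, I will invoke Donaldson's toric reduction of the Aubin--Mabuchi functionals. Combining the variational formulae \eqref{Aubin-Mabuchi} with the Legendre differential \eqref{Legendre-differential} and Abreu's toric Ricci formula \eqref{Kahler-Ricci-form}, one expresses the variation of $a\I-2\I^{\rho_0}$ along $\dot u$ as an integral over $\Pol$ which, after two integrations by parts using the boundary behaviour of symplectic potentials in Proposition~\ref{p:boundary}, coincides with the variation of $(2\pi)^m\mathbf{F}(u)$ along $\dot u$. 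Integrating along a path from $u_0$ to $u$ gives $(a\I-2\I^{\rho_0})(\kom_\varphi)=(2\pi)^m\mathbf{F}(u)$ modulo a $u_0$-dependent additive constant. Summing with the first summand and dividing by $(2\pi)^m$ yields the stated identity.

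The main obstacle is the bookkeeping of normalization constants: one must match $a=4\pi c_1(M,J)\cdot\kal^{m-1}/\kal^m$ from Definition~\ref{d:MabuchiGK} with $a=2\Vol(\partial\Pol,\d\sigma_\La)/\Vol(\Pol,\d x)$ from \eqref{Futaki}, an identification which is guaranteed by the Delzant description of toric Fano varieties and the Fano condition \eqref{Fano}. The $A$-reduction also requires globalizing \eqref{potential-computation}, since $v_\Theta$ is only a local section of $K_M^{-1}$; the residual ambiguity by a pluriharmonic torus-invariant function must be shown to assemble into an $A$-independent global quantity before the polytope integration is performed.
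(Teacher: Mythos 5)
Your overall architecture is close to the paper's: both arguments rest on the identity $F^{[m]}=\kom_{\varphi}^{[m]}$, on the determinant formula for $F^{[m]}/\omega_J^{[m]}$ from the proof of Lemma~\ref{Ricci}, and on Donaldson's toric expression for the Mabuchi energy. However, there is a genuine gap in the way you split the computation: the two intermediate identities you assert are each false, and only their sum is correct. First, the pointwise identity $F^{[m]}/\kom_0^{[m]}=\det(\Hess u_0)/\det(\Hess u)$ does not follow from the Legendre identity. At a point with holomorphic coordinate $y$ one has $\kom_{\varphi}^{[m]}/\kom_0^{[m]}=\det\big(\Hess_y\phi(y)\big)/\det\big(\Hess_y\phi_0(y)\big)$; while $\det\Hess_y\phi(y)=\big(\det\Hess_x u(x)\big)^{-1}$ at $x=\nabla\phi(y)$, the other factor equals $\big(\det\Hess_x u_0(x_0)\big)^{-1}$ at the \emph{different} point $x_0=\nabla\phi_0(y)=\nabla\phi_0(\nabla u(x))$. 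So the logarithms do not telescope over $\Pol$, and your claimed value for the first summand is wrong (test $u=u_0+{}$affine: your right-hand side vanishes while the relative entropy of $\tau^*\kom_0$ against $\kom_0$ does not). Second, and correspondingly, $(a\I-2\I^{\rho_0})(\kom_{\varphi})$ is \emph{not} equal to $(2\pi)^m{\bf F}(u)$ up to a constant: its variation is $-a\int_M\dot u\,\kom_{\varphi}^{[m]}+2\int_M\dot u\,\rho_0\wedge\kom_{\varphi}^{[m-1]}$, and the second integral involves the Ricci form of the \emph{fixed background} $\kom_0$, whose push-forward to $\Pol$ is a smooth measure of total mass $4\pi m\,\Vol(\Pol)$ rather than the boundary measure $\d\sigma_{\La}$; Abreu's formula and the two integrations by parts only apply to the Ricci form of $\kom_{\varphi}$ itself. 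The leftover term $2\int_M\dot u\,(\rho_0-\rho_{\kom_{\varphi}})\wedge\kom_{\varphi}^{[m-1]}$ is exactly what compensates the error in your first summand.

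The repair is to not split Donaldson's identity. What \cite{donaldsonJDG-02} provides is that the \emph{sum} of the relative entropy $\int_M\log\big(\kom_{\varphi}^{[m]}/\kom_0^{[m]}\big)\kom_{\varphi}^{[m]}$ and $(a\I-2\I^{\rho_0})(\kom_{\varphi})$, i.e.\ the ordinary Mabuchi energy $\M(\kom_{\varphi})$ in Chen--Tian form, equals $(2\pi)^m\big({\bf F}(u)-\int_{\Pol}\log\det\Hess(u)\,\d x+\int_{\Pol}\log\det\Hess(u_0)\,\d x\big)$. This is how the paper proceeds: since $F^{[m]}=\kom_{\varphi}^{[m]}$, one has $\M(F)=\M(\kom_{\varphi})+\int_M\log\big(F^{[m]}/\omega_J^{[m]}\big)F^{[m]}$, the last integral is computed on the polytope via the determinant ratio of Lemma~\ref{Ricci}, and Donaldson's formula is applied to $\M(\kom_{\varphi})$ as a single package, using that $\kom_{\varphi}=\d\d^c_{J_A}\phi(\tilde y)$ is the toric K\"ahler metric with symplectic potential $u$ under the identification of Lemma~\ref{l:complex-identification}. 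This last point also handles general $A$ directly, so the separate $A$-reduction step you propose (whose justification via the $A$-independence of $\log(v_\Theta/\kom_0^{[m]})$ is itself delicate, since $v_{\Theta}=2^m\det{\bf\Psi}\,\Fo^{[m]}$ depends on $A$) is not needed.
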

 \begin{proof} On a toric manifold $H^2(M, \C)=H^{1,1}(M, \C)$, so we are in the case $\alpha=\kal$ with $F^{[m]}=\kom_{\varphi}^{[m]}$.  By Definition \ref{d:MabuchiGK} and \eqref{potential-computation}, we compute
  \[
  \begin{split}
  \M(F) &= \M(\kom_{\varphi})  +(2\pi)^m \int_{\Pol} \log\left(\frac{\det \Hess(u)}{\det(\Hess(u) + \sqrt{-1}B)}\right) \d x.
                                             \end{split}\]
The claim follows by the above, taking in mind that $\kom_{\varphi} = \d \d^c_{J_A} \phi(\tilde y)$, where $\phi$ is the Legendre transform of $u\in \cS(\Pol, \La)$ (see \eqref{Legendre}) and $\tilde y$ are the pluriharmonic coordinates of $J_A$ (see \eqref{tilde-y-inv}), and the fact that (see \cite{donaldsonJDG-02})
 \[\M(\kom_{\varphi}) = (2\pi)^m\left({\bf F}(u) -\int_{\Pol} \log \det(\Hess(u)) \d x + \int_{\Pol} \log \det(\Hess(u_0))\d x \right) .\]
 \end{proof}
 
 We next derive fundamental variational properties for the extended Mabuchi energy.  Recall our expression for the metric
\begin{align*}
{\bf X} = \Re \big[{\rm Hess}(u) + \sqrt{-1}B\big]^{-1}.
\end{align*}
It turns out that the quantity
\begin{equation}\label{GKextremal}
 \kappa(u, A, B):= -\sum_{i.j=1}^m {\bf X}_{ij,ij} 
 \end{equation}
is identified in~\cite{boulanger,W3} with the momentum map for the action of the group of $\T$-equivariant Hamiltonian symplectomorphisms of $(M, \Fo)$ on the space of compatible toric GK structures, and thus  $\kappa$ extends the notion of scalar curvature to the GK context, via the momentum map picture of \cite{Donaldson, fujiki}.  We call $\gk$ the \emph{generalized scalar curvature}.  This point of view is extended to GK manifolds of symplectic type (without toric symmetry) in \cite{Goto-GIT}.  We thus conclude  that the critical points  of ${\bf M}$ correspond to toric GK structures for which ${\rm Gscal}(g)=\kappa(u, A, B)=a$.

\begin{prop}\label{p:Mabuchi-derivative} Given a one-parameter family of symplectic potentials $u_s = u + s \dot{u}$ and fixed matrices $A,B$, we obtain
\[ (\delta_{u} {\bf M})(\dot u) = \int_{\Pol} \dot u \left(\gk(u,A,B) - a\right) \d x.\]
In particular, the critical points of $\M$ on the space of toric generalized K\"ahler structures of symplectic type on $(M, J)$ with fixed cohomology class and holomorphic Poisson tensor have constant generalized K\"ahler scalar curvature.  Furthermore,
\[(\delta^2_{u} {\bf M})(\dot u, \dot u) = \int_{\Pol}  {\rm tr}\Big[\Big(\big({\rm Hess}(u) + \sqrt{-1}B\big)^{-1}{\rm Hess}(\dot u)\Big)^2\Big] \d x.\]
\end{prop}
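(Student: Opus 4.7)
The plan is to use the explicit polytope formula for $\M$ from Lemma~\ref{Mabuchi-toric}, which reduces the computation of both variations to differentiation under the integral sign together with a Donaldson-type integration by parts. Set $M := \Hess(u) + \sqrt{-1}B$. Because $\Hess(u)$ is symmetric and $B$ antisymmetric, one has $\overline M = M^T$, so the inverse decomposes as $M^{-1} = \X + \sqrt{-1}\Y$ with $\X$ symmetric and $\Y$ antisymmetric; in particular, the real part $\X$ of $M^{-1}$ is precisely the matrix appearing in the definition of $\gk(u,A,B)$.

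For the first variation, the linear functional ${\bf F}$ contributes $(\delta_u{\bf F})(\dot u) = -a\int_\Pol\dot u\,\d x + 2\int_{\partial\Pol}\dot u\,\d\sigma_\La$, while the standard variation of $\log\det$ gives
\[
(\delta_u\log\det M)(\dot u) = \tr\bigl(M^{-1}\Hess(\dot u)\bigr) = \sum_{i,j} M^{-1}_{ij}\,\dot u_{,ij}.
\]
The contribution of $\Y$ to this sum vanishes because $\Y$ is antisymmetric while $\Hess(\dot u)$ is symmetric, so the integrand reduces to $\sum_{i,j}\X_{ij}\dot u_{,ij}$. Applying Donaldson's integration by parts twice to move both derivatives from $\dot u$ onto $\X$ yields
\[
\int_\Pol \X_{ij}\dot u_{,ij}\,\d x = 2\int_{\partial\Pol}\dot u\,\d\sigma_\La - \int_\Pol \gk(u,A,B)\,\dot u\,\d x,
\]
where the sign follows from $\sum_{i,j}\X_{ij,ij} = -\gk(u,A,B)$. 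Substituting into the expression for $\delta_u\M$, the boundary integrals cancel and we obtain $(\delta_u\M/(2\pi)^m)(\dot u) = \int_\Pol(\gk(u,A,B)-a)\,\dot u\,\d x$. Since $\dot u$ ranges over all smooth functions on $\overline\Pol$, the critical point condition is $\gk(u,A,B)\equiv a$; in view of Proposition~\ref{p:toric-Poisson}, fixing the cohomology class and the parameters $A,B$ amounts to fixing the holomorphic Poisson tensor, which yields the stated characterization of critical points.

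For the second variation no further integration by parts is needed: differentiating the nonlinear term $-\int_\Pol\tr(M^{-1}\Hess(\dot u))\,\d x$ once more in $u$ in the direction $\dot u$, using the standard identity $\delta_u M^{-1} = -M^{-1}\Hess(\dot u)M^{-1}$, immediately produces
\[
(\delta^2_u\M/(2\pi)^m)(\dot u,\dot u) = \int_\Pol\tr\bigl((M^{-1}\Hess(\dot u))^2\bigr)\,\d x.
\]
This expression is automatically real: using $(M^{-1})^T = \overline{M^{-1}}$ together with the symmetry $\Hess(\dot u)^T = \Hess(\dot u)$ and cyclicity of the trace, one checks directly that $\overline{\tr((M^{-1}\Hess(\dot u))^2)} = \tr((M^{-1}\Hess(\dot u))^2)$.

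The only point requiring justification is the Donaldson integration by parts applied to $\X$ in place of $\Hess(u)^{-1}$. Solving $(\Hess(u)+\sqrt{-1}B)(\X+\sqrt{-1}\Y) = \mathrm{Id}$ for the real part yields $\X = (\Hess(u)+B\,\Hess(u)^{-1}B)^{-1}$, and the boundary behavior of $u\in\cS(\Pol,\La)$ from Proposition~\ref{p:boundary} ensures that $\Hess(u)^{-1}$ annihilates the conormal direction along each facet of $\Pol$. Consequently the correction $B\,\Hess(u)^{-1}B$ is subleading near $\partial\Pol$, so $\X$ has exactly the same asymptotic profile along the boundary as $u^{ij} = \Hess(u)^{-1}_{ij}$. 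Donaldson's cancellation of boundary terms~\cite{donaldsonJDG-02} is a local computation depending only on this asymptotic profile and therefore transfers verbatim to $\X$, delivering the coefficient $2$ in front of $\int_{\partial\Pol}\dot u\,\d\sigma_\La$ that is needed to complete the first-variation computation.
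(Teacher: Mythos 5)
Your argument is correct and follows essentially the same route as the paper's proof: linearize the $\log\det$ term in Lemma~\ref{Mabuchi-toric}, discard the antisymmetric imaginary part of $\big({\rm Hess}(u)+\sqrt{-1}B\big)^{-1}$ against the symmetric ${\rm Hess}(\dot u)$, and integrate by parts \`a la \cite{donaldsonJDG-02} using that ${\bf X}=\big({\rm Hess}(u)+B({\rm Hess}(u))^{-1}B\big)^{-1}$ satisfies the same first-order boundary conditions as $({\rm Hess}(u))^{-1}$ (the paper outsources this last point to \cite[Prop.~1]{ACGT2}, whereas you verify it directly). Your second-variation computation via $\delta M^{-1}=-M^{-1}{\rm Hess}(\dot u)M^{-1}$ is likewise exactly the paper's "differentiate once more" step.
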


\begin{proof} These follow by observing that the linearization of the entropy term is 
\[\int_{\Pol} {\rm tr} \Big[ \big({\rm Hess}(u) + \sqrt{-1}B\big)^{-1} \circ {\rm Hess}(\dot{u})\Big] \d x =  \int_{\Pol} {\rm tr}\big({\bf X} \circ {\rm Hess}(\dot{u})\big) \d x.\]
Using that ${\bf F}$ is linear, and differentiating one more time the above expression yields the formula for the Hessian of ${\bf M}$. To obtain the first formula, we need to integrate by parts the RHS of the above expression as in \cite{donaldsonJDG-02}. To this end, we observe that ${\bf X}= \big[{\rm Hess}(u) + B \big({\rm Hess}(u)\big)^{-1}B\big]^{-1}$ is a symmetric matrix on $\Pol$,  which satisfies the first order  boundary conditions of \cite[Prop. 1]{ACGT2} (see also Remark 4 in that reference).  The second variation follows easily by passing another $s$ derivative through the variation formula above and using that $\ddot{u} = 0$.
 \end{proof}

Lastly in this subsection we establish the monotonicity of the extended Mabuchi energy along NGKRF in the toric Fano setting.  We build this up through a series of lemmas.

 \begin{lemma}\label{flow-M} On a toric Fano manifold, along the flow \eqref{unnormalized} one has
  \[ \frac{d}{dt}\M(F_{t}) = \left[-\int_M |d \dot \varphi_t|^2_{\kom_{t}} F_{t}^{[m]} + \int_M \left(s_J^B(\pg_{\varphi_t})  - 2m\right)F_{t}^{[m]}\right],\]
where $s_J^B(\pg_{t})$ denotes the Bismut scalar curvature of the Hermitian structure $(\pg_{t}, J)$.
\end{lemma}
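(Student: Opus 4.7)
The plan is to decompose $\M(F_t)$ via the K\"ahler reduction and differentiate piece by piece. In the toric Fano situation, $\alpha = \kal = 2\pi c_1(M,J)$, so the constant $c$ in \eqref{defining} is $1$ and the K\"ahler reduction $\kom_t$ of $F_t$ satisfies $\kom_t^{[m]} = F_t^{[m]}$; moreover, by Proposition~\ref{GKRF-Kahler-geometrc}, $\kom_t = \kom_0 + dd^c_J \varphi_t$. Writing $\omega_{J,t} := F_t^{1,1}$, Definition~\ref{d:MabuchiGK} and the identity $\kom_t^{[m]} = F_t^{[m]}$ give
\[
\M(F_t) = E_1(\kom_t) + E_2(F_t) + (a\I - 2\I^{\rho_0})(\varphi_t),
\]
where $E_1(\kom) := \int_M \log(\kom^{[m]}/\kom_0^{[m]})\kom^{[m]}$, $E_2(F) := \int_M \log(F^{[m]}/\omega_J^{[m]})\, F^{[m]}$, and $a = 2m$ since $\kal \cdot \kal^{[m-1]} = m\, \kal^{[m]}$.

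Using $\dot{\kom}_t = dd^c_J \dot\varphi_t$ and $\dot{\omega}_{J,t} = -2(\rho^B_J(\pg_t) - \omega_{J,t})$ from Proposition~\ref{GKRF-Kahler-geometrc}, together with \eqref{chern-form} and $\trace_{\omega_{J,t}} \rho^B_J = \tfrac{1}{2} s^B_J(\pg_t)$, I will compute by integration by parts
\begin{align*}
\frac{d}{dt} E_1 &= -2 \int_M \dot\varphi_t (\rho_{\kom_t} - \rho_0) \wedge \kom_t^{[m-1]}, \\
\frac{d}{dt} E_2 &= \int_M (s^B_J(\pg_t) - 2m)\, F_t^{[m]} + 2 \int_M \dot\varphi_t (\rho^C_J(\pg_t) - \rho_{\kom_t}) \wedge \kom_t^{[m-1]}, \\
\frac{d}{dt} (a\I - 2\I^{\rho_0}) &= 2 \int_M \dot\varphi_t (\kom_t - \rho_0) \wedge \kom_t^{[m-1]}.
\end{align*}
Summing, the $\rho_0$ contributions cancel and one obtains
\[
\frac{d}{dt} \M(F_t) = \int_M (s^B_J - 2m)\, F_t^{[m]} + 2 \int_M \dot\varphi_t \big[\rho^C_J(\pg_t) - 2\rho_{\kom_t} + \kom_t\big] \wedge \kom_t^{[m-1]}.
\]

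The final step is to recognize, directly from the flow equation~\eqref{unnormalized}, the identity $\rho^C_J(\pg_t) - 2\rho_{\kom_t} + \kom_t = \tfrac{1}{2}\, dd^c_J \dot\varphi_t$. This follows by applying $dd^c_J$ to~\eqref{unnormalized}: the term $2\varphi_t - 2h_0$ contributes $2\kom_t - 2\rho_0$ via $dd^c_J h_0 = \rho_0 - \kom_0$; the ratio $\log(\kom_t^{[m]}/\kom_0^{[m]})$ contributes $-2(\rho_{\kom_t} - \rho_0)$ by \eqref{chern-form}; and $\log(\kom_t^{[m]}/\omega_{J,t}^{[m]})$ contributes $2(\rho^C_J(\pg_t) - \rho_{\kom_t})$. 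Substituting and applying Stokes' theorem to the $d$-closed form $\kom_t^{[m-1]}$ on the closed manifold $M$ gives
\[
2 \int_M \dot\varphi_t \big[\rho^C_J - 2\rho_{\kom_t} + \kom_t\big] \wedge \kom_t^{[m-1]} = \int_M \dot\varphi_t\, dd^c_J \dot\varphi_t \wedge \kom_t^{[m-1]} = - \int_M |d \dot\varphi_t|^2_{\kom_t}\, \kom_t^{[m]},
\]
and $\kom_t^{[m]} = F_t^{[m]}$ then yields the stated formula.

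The computation is essentially a controlled chain of integrations by parts, with no analytic input beyond the evolution equations already recorded. The only bookkeeping subtlety is that $s^B_J$ arises naturally through the trace of the evolution of $\omega_{J,t}$, while $\rho^C_J$ appears through $dd^c_J$ acting on volume ratios; the flow equation~\eqref{unnormalized} is precisely what reconciles these with the Mabuchi-type terms so that the non-entropy part collapses to a gradient energy.
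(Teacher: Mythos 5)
Your proposal is correct and follows essentially the same route as the paper: both decompose $\M$ into the entropy term plus the Aubin--Mabuchi terms, differentiate using $F_t^{[m]}=\kom_t^{[m]}$ and the evolution equations of Proposition~\ref{GKRF-Kahler-geometrc}, and integrate by parts so that the flow equation \eqref{unnormalized} collapses the non-entropy part to $-\int_M |d\dot\varphi_t|^2_{\kom_t}F_t^{[m]}$. The only (cosmetic) difference is that the paper first rewrites the entropy integrand as $\dot\varphi_t-2\varphi_t+2h_0$ via \eqref{unnormalized} and then differentiates, whereas you differentiate each logarithmic term through \eqref{chern-form} and recognize $\rho^C_J-2\rho_{\kom_t}+\kom_t=\tfrac12\,\d\d^c_J\dot\varphi_t$ at the end; the cancellations are identical.
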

  
\begin{proof} Referring to \eqref{Mabuchi-CT} (with $\alpha=\kal$ and $a=2m$ according to our normalization),  $F_t^m =\kom_t^m$ and using \eqref{unnormalized}-\eqref{geometric-evolution} (with $c(t) \equiv 0$), we compute 
\[\begin{split}
\frac{d}{dt} \M(F_t) =&\ 2 \int_M \tr_{\pom_t}\left(\rho^B_J(\pg_t) - \kom_t\right)\kom_t^{[m]} +\int_M\big(\dot \varphi_t - 2\varphi_t + 2h_0\big) (\d\d^c\dot \varphi_t) \wedge \kom_t^{[m-1]}  \\
                                                     &\ \qquad + 2m \int_M \dot \varphi_t \kom_t^{[m]} - 2\int_M \dot\varphi_t \rho_0 \wedge \kom_t^{[m-1]}\\
                                                   =&\ \int_M\left(s_J^B(\pg_t)-  2m\right)\kom_t^{[m]} -\int_M|d\dot \varphi_t|^2_{\kom_t}\kom _t^{[m]} \\
                                                   &\ \qquad -2\int_M \dot\varphi_t(\kom_t - \kom_0)\wedge \kom_t^{[m-1]} + 2\int_M\dot \varphi_t(\rho_0-\kom_0)\wedge \kom_t^{[m-1]}  \\
                                                   &\ \qquad +2m \int_M \dot \varphi_t \kom_t^{[m]} - 2\int_M \dot\varphi_t \rho_0 \wedge \kom_t^{[m-1]}  \\
                                                 =&\ \int_M\left(s_J^B(\pg_t)-  2m\right)\kom_t^{[m]} -\int_M|d\dot \varphi_t|^2_{\kom_t}\kom_t^{[m]}.
\end{split}\]  
The claim follows by using $\kom_t^{[m]} =F_t^{[m]}$ again. \end{proof}

\begin{lemma}\label{l:key} Suppose $(M, g, I, J)$ is a compact GK manifold of symplectic type. Then
\[\int_M\left[\tr_{\omega_J}(\rho^B_J) - \tr_{F}(\rho^B_J)\right]F^{[m]} = -\frac{1}{8}\int_M\left[ \Big|\d\log\det(I+J)-\theta_I-\theta_J\Big|^2_g+|\theta_I-\theta_J|^2_g \right]F^{[m]}.\]
\end{lemma}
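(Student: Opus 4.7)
The plan is a direct computation that combines the local potential for $\rho^B_J$ (obtained by the $I \leftrightarrow J$ swap of Proposition~\ref{p:symplectic-Ricci-potential}), the volume ratio identity $F^{[m]}/\omega_J^{[m]} = 2^m \det(I+J)^{-1/2}$, and Lemma~\ref{symplectic-type}.

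First, I would convert the LHS to a wedge product. Since $\tr_{\Omega}\alpha \cdot \Omega^{[m]} = m\,\alpha \wedge \Omega^{[m-1]}$ for any nondegenerate closed $2$-form $\Omega$ in dimension $2m$ and any $2$-form $\alpha$, the integrand of the LHS equals
\[
m\,\rho^B_J \wedge \bigl[R\,\omega_J^{[m-1]} - F^{[m-1]}\bigr], \qquad R := \frac{F^{[m]}}{\omega_J^{[m]}} = 2^m\det(I+J)^{-1/2}.
\]
Expanding $F = \omega_J + \alpha_J$ with $\alpha_J := F^{2,0+0,2}_J$, and using that $\rho^B_J$ is of type $(1,1)_J$ (since the Bismut connection is Hermitian for $(g,J)$), I can rewrite this bracket purely in terms of $\omega_J$-powers wedged against $\alpha_J$-powers, extracting an explicit dependence on $\log R$ via the binomial expansion.

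Next I would invoke the $I\leftrightarrow J$-symmetric version of Proposition~\ref{p:symplectic-Ricci-potential}, which gives $\rho^B_J = dd^c_I \Phi_{I,\Theta'}$ locally, where $\Phi_{I,\Theta'} = -\tfrac{1}{2}\bigl[\log(F^{[m]}/v_{\Theta'}) + \log R\bigr]$ (recalling $\omega_I^{[m]} = \omega_J^{[m]} = m!\,dV_g$). The piece involving $v_{\Theta'}$ produces the Chern--Ricci form $\rho^C_I$ (plus the Chern--Ricci-like global correction), while the $\log R$ piece, via $d\log R = -\tfrac{1}{2}d\log\det(I+J)$, feeds directly into the desired expression on the RHS. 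Integration by parts, exploiting that both $F$ and $\rho^B_J$ are $d$-closed, converts the wedge-product integrals into scalar integrals against $F^{[m]}$ involving $d\log\det(I+J)$, $\theta_I$, and $\theta_J$; the pluriclosed-type boundary corrections $\delta^g b$ and $\iota_{\theta^\sharp}b$ appear through the identity $\theta_J = J d^{*_g}\omega_J$.

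Then I would use Lemma~\ref{symplectic-type}: averaging and subtracting its two parallel expressions for $\tfrac{1}{2}d\log\det(I+J)$ yield
\[
d\log\det(I+J) - \theta_I - \theta_J \;=\; \iota_{(\theta_J^\sharp-\theta_I^\sharp)}b, \qquad \theta_I - \theta_J \;=\; \iota_{(\theta_I^\sharp+\theta_J^\sharp)}b + 2\delta^g b,
\]
which are precisely the two combinations that appear inside the norm-squared terms on the RHS. Substituting these and using Hodge-duality identities for $|\iota_X b|^2_g$ (which hold because of the symplectic-type constraint relating $b$, $\omega_I$, and $\omega_J$) completes the squares and yields the two negative-definite terms as stated.

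The main obstacle will be the careful bookkeeping of Step 2 together with the completion-of-squares in Step 4: the integration by parts must track the non-$(1,1)_J$ components of $F$, and the final assembly into exactly two separate squared Lee-form expressions is a subtle $I/J$-symmetry cancellation. One expects the term $|d\log\det(I+J)-\theta_I-\theta_J|^2$ to come from the ``average'' of the two expressions for $d\log\det(I+J)$ in Lemma~\ref{symplectic-type}, and $|\theta_I-\theta_J|^2$ from their ``difference,'' reflecting the fact that $\rho^B_J$ has dual potentials adapted to $I$ and $J$ that must be reconciled.
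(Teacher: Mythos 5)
There is a genuine gap, and it starts with a false assertion: $\rho^B_J$ is \emph{not} of type $(1,1)$ with respect to $J$. The Bismut connection is Hermitian, but its $(0,1)$-part is not the Cauchy--Riemann operator, and indeed \eqref{ricci-relation} gives $\rho^B_J = \rho^C_J + \d(J^\sharp\theta_J)$, whose second term has a nontrivial $(2,0)+(0,2)_J$ component in general (this is precisely why the flow equation \eqref{e:NGKRF} carries the projection $(\rho^B_J)^{1,1}$). What is true is that $\rho^B_J$ is of type $(1,1)$ with respect to $I$ ([GRFBook, Lemma 9.26], used repeatedly in this paper). Consequently the binomial expansion of $\rho^B_J\wedge F^{[m-1]}$ in your Step 1 does not collapse as you claim: the $(2,0)+(0,2)_J$ part of $\rho^B_J$ pairs nontrivially with $\alpha_J$, and the term $\tr_F(\rho^B_J)$ genuinely involves $\big\langle\rho^B_J,\omega_I\big\rangle_g$ (since $F^{-1}=-\tfrac12(I+J)g^{-1}$), not just $J$-data. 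The paper handles exactly this pairing by invoking Proposition~\ref{p:Ricci-difference}, $\rho^B_J-\rho^B_I=\tfrac12\cL_{\theta_J^\sharp-\theta_I^\sharp}F$, together with $s^B_J-s^B_I=-\delta^g(\theta_J-\theta_I)$, to convert $\big\langle\rho^B_J,\omega_I\big\rangle_g$ into Lee-form data; your proposal never uses this ingredient and offers no substitute for it.

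The second omission is the Ivanov--Papadopoulos identity $\delta^g\theta_I+|\theta_I|^2_g=\tfrac16|H|^2_g=\delta^g\theta_J+|\theta_J|^2_g$, which the paper needs to symmetrize the pointwise expression in $I$ and $J$ before the weighted integration by parts against $F^{[m]}=2^m\det(I+J)^{-1/2}\omega_J^{[m]}$. Without it, the integrated quantity comes out as something like $\big\langle \d\log\det(I+J),\tfrac32\theta_I+\tfrac12\theta_J\big\rangle_g-2|\theta_I|^2_g-\tfrac12|\d\log\det(I+J)|^2_g$, which does not assemble into the two stated squares; the actual completion of squares is the algebraic identity $-\tfrac12|X|^2+\langle X,\theta_I+\theta_J\rangle-|\theta_I|^2-|\theta_J|^2=-\tfrac12\big(|X-\theta_I-\theta_J|^2+|\theta_I-\theta_J|^2\big)$ with $X=\d\log\det(I+J)$, and reaching that symmetric intermediate form is exactly where the identity enters. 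Your two consequences of Lemma~\ref{symplectic-type} (the sum and difference formulas) are correct, and the final squared quantities are indeed $\iota_{(\theta_J^\sharp-\theta_I^\sharp)}b$ and $\theta_I-\theta_J$, but substituting them does not by itself produce the cancellation; as written, Steps 2--4 are a plan rather than a proof, and the two missing identities above are precisely the content that makes the paper's computation close.
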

\begin{proof} We denote by $s_J^B$ and $s_I^B$ the Bismut-Ricci scalar curvatures of $(g, J)$ and $(g, I)$, respectively. By Proposition~\ref{p:Ricci-difference} and \cite{IP}, we know that
\[ \rho^B_J- \rho^B_I = \tfrac{1}{2}\cL_{(\theta_J^{\sharp}-\theta_{I}^{\sharp})} F, \qquad s_J^B - s_I^B = -\delta^g(\theta_J - \theta_I). \]
Using that $F=-2g(I+J)^{-1}$  and the above formula, we compute
\[
\begin{split}
&2\left[\tr_{\omega_J}(\rho^B_J) - \tr_{F}(\rho^B_J)\right] = s_J^B - \big\langle \rho^B_J, \omega_J + \omega_I\big\rangle_g = \tfrac{1}{2} s_J^B - \big\langle \rho^B_J, \omega_I\big\rangle_g \\
& \qquad = \tfrac{1}{2} (s_J^B -  s_I^B) - \tfrac{1}{2}\Big\langle \cL_{(\theta_J^{\sharp}-\theta_{I}^{\sharp})} F, \omega_I\Big\rangle_g
= -\tfrac{1}{2}\delta^g(\theta_J - \theta_I)- \tfrac{1}{2}\Big\langle \cL_{(\theta_J^{\sharp}-\theta_{I}^{\sharp})} F, \omega_I\Big\rangle_g.
\end{split} \]
We next note that, by \eqref{basic}, 
\[\cL_{(\theta_J^{\sharp}-\theta_{I}^{\sharp})} F = \d\left(F(\theta_J^{\sharp}-\theta_I^{\sharp})\right) = \d\left(I(\theta_J - \theta_I) + Ib(\theta_J^{\sharp}-\theta_I^{\sharp})\right).\]
Thus
\[
\begin{split}
2\left[\tr_{\omega_J}(\rho^B_J) - \tr_{F}(\rho^B_J)\right] &=  -\tfrac{1}{2}\delta^g(\theta_J - \theta_I)- \tfrac{1}{2}\Big\langle \d \left(I\left( (\theta_J - \theta_I) + b(\theta_J^{\sharp}-\theta_I^{\sharp})\right)\right), \omega_I\Big\rangle_g \\
&= \tfrac{1}{2}\left(\delta^g\left(b(\theta_J^{\sharp}-\theta_I^{\sharp})\right) + \big\langle \theta_J - \theta_I + b(\theta_J^{\sharp}-\theta_I^{\sharp}), \theta_I\big\rangle_g\right),
\end{split}\]
where we have used the identity  $\langle \d I \alpha, \omega_I\rangle_g= -\delta^g \alpha - \langle \alpha, \theta_I\rangle_g$.
We next use the identity  (which follows from Lemma~\ref{symplectic-type})
\[ \d \log \det (I+J) = \theta_I + \theta_J + b(\theta_J^{\sharp}-\theta_I^{\sharp})\]
 in order to express the  terms containing $b(\theta_J^{\sharp}-\theta_I^{\sharp})$. We thus get
 \[
\begin{split}
2\left[\tr_{\omega_J}(\rho^B_J) - \tr_{F}(\rho^B_J)\right] &=   \tfrac{1}{2}\left(-\Delta^g \log \det (I+J) - \delta^g(\theta_I +\theta_J)  + \big\langle - 2\theta_I + \d \log \det (I+J), \theta_I\big\rangle_g\right) \\
&= \tfrac{1}{2}\left(-\Delta^g \log \det (I+J) - \delta^g(\theta_I +\theta_J)  - 2|\theta_I|^2_g + \big\langle \d \log \det (I+J), \theta_I\big\rangle_g\right).
\end{split}\]
We finally use that (as $F= -2g(I+J)^{-1}$)
\begin{equation*}
 F^{[m]} = 2^m \det(I+J)^{-\frac{1}{2}}\,  \omega_J^{[m]} \end{equation*}
and integration by parts to get
\[
\begin{split}
&\int_M\left[\tr_{\omega_J}(\rho^B_J) - \tr_{F}(\rho^B_J)\right] F^{[m]} \\
&=   2^{m-2}\int_M\left(-\Delta^g \log \det (I+J) + \big\langle \d \log \det (I+J), \theta_I\big\rangle_g - \delta^g(\theta_I +\theta_J)  - 2|\theta_I|^2_g \right)\det(I+J)^{-\frac{1}{2}}\,  \omega_J^{[m]}  \\
&=   2^{m-2}\int_M\left(-\Delta^g \log \det (I+J) + \big\langle \d \log \det (I+J), \theta_I\big\rangle_g  - |\theta_I|^2_g-|\theta_J|^2_g-2\delta^g\theta_J \right)\det(I+J)^{-\frac{1}{2}}\,  \omega_J^{[m]}  \\
&= 2^{m-2}\int_M\left[-\frac{1}{2}\Big|\d\log \det(I+J)\Big|^2_g + \big\langle \d \log \det (I+J), \theta_I+\theta_J\big\rangle_g-|\theta_I|^2_g-|\theta_J|^2_g \right]\det(I+J)^{-\frac{1}{2}}\,  \omega_J^{[m]}   \\
&= -\frac{1}{8}\int_M\left[ \Big|\d\log\det(I+J)-\theta_I-\theta_J\Big|^2_g+|\theta_I-\theta_J|^2_g \right]F^{[m]}
\end{split}
\]
where we have used the relation $\delta^g \theta_I + |\theta_I|^2_g =\frac{1}{6}|H|_g^2=\delta^g\theta_J+|\theta_J|^2_g$ from \cite{IP}.
\end{proof}

 \begin{cor}\label{Mabuchi-monotone} On a toric Fano manifold, the Mabuchi functional $\M(F_{\varphi_t})$ is monotone decreasing along NGKRF.
 \end{cor}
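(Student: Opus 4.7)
The plan is to combine Lemma~\ref{flow-M}, which expresses $\frac{d}{dt}\M(F_{\varphi_t})$ as a sum of two explicit integrals, with Lemma~\ref{l:key} and a cohomological identity for the second integral. Concretely, Lemma~\ref{flow-M} delivers
\[
\frac{d}{dt}\M(F_{\varphi_t}) = -\int_M |d\dot\varphi_t|^2_{\kom_t}\, F_{\varphi_t}^{[m]} \;+\; \int_M \bigl(s^B_J(\pg_{\varphi_t}) - 2m\bigr)\, F_{\varphi_t}^{[m]}.
\]
The first summand is manifestly nonpositive, so the corollary reduces to proving $\int_M (s^B_J - 2m) F_{\varphi_t}^{[m]} \leq 0$.

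To establish this, I would first read off from the interior calculation of Lemma~\ref{l:key} the identity $s^B_J = 2\langle \rho^B_J,\omega_J\rangle_g = 2\tr_{\omega_J}\rho^B_J$, which matches the conventions used there. Lemma~\ref{l:key} itself then supplies the integrated inequality
\[
\int_M \tr_{\omega_J}\rho^B_J\cdot F_{\varphi_t}^{[m]} \;\leq\; \int_M \tr_{F_{\varphi_t}}\rho^B_J \cdot F_{\varphi_t}^{[m]},
\]
because the right-hand side of Lemma~\ref{l:key} is minus an integral of squared norms. Next, the elementary symplectic identity $\tr_F(\alpha)\, F^{[m]} = \alpha \wedge F^{[m-1]}$, valid for \emph{any} real $2$-form $\alpha$ when $F$ is non-degenerate, turns the upper bound into the purely cohomological quantity
\[
\int_M \rho^B_J \wedge F_{\varphi_t}^{[m-1]} \;=\; \frac{[\rho^B_J]\cdot [F_{\varphi_t}]^{m-1}}{(m-1)!}.
\]
By Proposition~\ref{p:NGKRFPoisson}(1) and the fact that $\rho^B_J \in 2\pi c_1(M,J)$, one has $[F_{\varphi_t}] = 2\pi c_1(M,J) = [\rho^B_J]$ for every $t$, whence this integral equals $[F_{\varphi_t}]^m/(m-1)! = m\int_M F_{\varphi_t}^{[m]}$. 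Stringing the steps together yields $\int_M(s^B_J - 2m)F_{\varphi_t}^{[m]}\leq 0$, and combined with the gradient term this gives $\frac{d}{dt}\M(F_{\varphi_t}) \leq 0$.

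There is no substantial analytic obstacle; the argument is essentially a bookkeeping exercise linking the two preceding lemmas with a Chern--Weil computation. The one delicate point worth flagging is matching the conventions that relate the Bismut scalar curvature $s^B_J$ to $\tr_{\omega_J}\rho^B_J$, and the meaning of $\tr_{F}$ when $F$ is not of pure type $(1,1)$; both can be pinned down from the internal step ``$s^B_J - \langle\rho^B_J, \omega_J + \omega_I\rangle_g = \tfrac{1}{2}s^B_J - \langle\rho^B_J, \omega_I\rangle_g$'' appearing in the proof of Lemma~\ref{l:key}, so one need only be careful that the factors of $2$ line up correctly when the inequalities are finally combined.
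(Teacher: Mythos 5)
Your proposal is correct and follows essentially the same route as the paper: the paper's proof likewise combines Lemma~\ref{flow-M} with the sign of the right-hand side of Lemma~\ref{l:key}, the identity $\tr_{\omega_J}\rho^B_J=\tfrac{1}{2}s^B_J$, and the cohomological evaluation $\int_M \tr_{F}(\rho^B_J)\,F^{[m]}=\int_M \rho^B_J\wedge F^{[m-1]}$ via the Fano normalization $[F]\in 2\pi c_1(M,J)$. The only cosmetic difference is that you spell out the bookkeeping (including the factor-of-two conventions) more explicitly than the paper does.
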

\begin{proof} This follows  directly  from Lemma~\ref{flow-M}  and Lemma~\ref{l:key}, noting that $\tr_{\pom}(\rho^B_J(g))= \frac{s_J^B}{2}$ and 
\[\int_M \tr_{F}(\rho_J^B(g)) F^{[m]} =\int_M \rho^B_J(g)\wedge F^{[m-1]} = m\]
by the Fano condition and the normalization $[F]\in 2\pi c_1(M, J)$. \end{proof}

\subsection{Weak convergence}\label{s:weakconvergence}

We suppose through this section that $(M, J, \T)$ is a smooth toric Fano manifold,   $\kom_0 \in 2\pi c_1(M, J)$ is an initial K\"ahler form with corresponding  barycentered Fano polytope $(\Pol, {\La})$, and we further assume that  the Futaki invariant of $(M, J, [\kom_0])$ vanishes. In this case, by~\cite{WZ},  $(M, J)$ admits a K\"ahler-Einstein metric and,  by \cite{DR,Tian}, the Mabuchi energy is coercive relative to $\Aut_{0}(M, J)$.  We first recall what this precisely means.  Denote by 
\[ \cH(M, \kom_0)=\left\{ \varphi \in C^{\infty}(M)\, | \, \kom_{\varphi}:= \kom_0 + \d\d^c \varphi >0 \right\}\] the space of  $\kom_0$-relative K\"ahler potentials  on $M$, and  consider the Aubin-Mabuchi functional $\I$ on $\cH(M, \kom_0)$, defined in \eqref{Aubin-Mabuchi}. Notice that $\I(\varphi + c) = \I(\varphi) + c\Vol(M, \kom_0)$, so we can introduce the slice
\[\mathring{\cH}(M, \kom_0) := \cH(M, \kom_0) \cap \I^{-1}(0),\]
and consider the induced action of $\G:=\Aut_0(M, J)$ on $\mathring{\cH}(M, \kom_0)$ via the natural action of $\G$ on the space of $\T$-invariant K\"ahler metrics in $[\kom_0]$ by pullbacks, i.e. for any $\varphi \in \mathring{\cH}(M, \kom_0)$ and any $\tau \in \G$, we let $\tau[\varphi]$ denote the uniquely determined $\kom_0$-relative K\"ahler potential in $\mathring{\cH}(M, \kom_0)$ of the K\"ahler metric $\tau^*(\omega_{\varphi})$. We thus have
\[\tau[\varphi] = \tau[0] + \varphi \circ \tau. \]

We next consider the $d_1$-distance on $\cH(M, \kom_0)$,  introduced via the $L^{1}$-length of a smooth curve $\varphi(t )\in \cH(M, \kom_0)$:
\[l_1(\varphi(t)) = \int_{0}^1 \int_M |{\dot \varphi}(t)| \, (\kom_{\varphi(t)})^{[m]}.\]
By the results in \cite{Da}, 
\[d_1(\varphi_0, \varphi_1) := \inf \Big\{l_1(\varphi(t))\,  \Big|  \, \varphi(t, x) \in C^{\infty}([0, 1] \times M),  \, \varphi(t, \cdot ) \in  \cH(M, \kom_0), \, \varphi(0)=\varphi_0, \, \varphi(1)=\varphi_1 \Big\}\]
defines a distance. 
We then have
\begin{thm}\label{DR}\cite{DR} If $(M, J, [\kom_0])$ admits a K\"ahler-Einstein metric, then the Mabuchi energy $\M$  is $\G$-invariant and $\G$-coercive  on $\cH(M, \kom_0)$,   i.e.  there are uniform constants $\lambda>0, \delta$ such that 
\[ \M (\varphi) \ge  \lambda \inf_{\tau \in \G} d_1(0, \tau[\varphi]) - \delta, \qquad \forall \varphi \in \mathring{\cH}(M, \kom_0). \]
\end{thm}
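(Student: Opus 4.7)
The $\G$-invariance of $\M$ is essentially formal. Each summand of the Mabuchi energy, when viewed as a functional on the space of K\"ahler metrics $\omega_\varphi \in [\kom_0]$, is expressed as an integral of scalar invariants of $(M,J,\kom_\varphi)$ and hence is invariant under pullback by biholomorphisms. The only subtlety is the reference form $\kom_0$ hidden in the relative definition, but one checks directly that for $\tau\in \G$ one has $\M(\tau^*\kom_\varphi) = \M(\kom_\varphi) + \bigl(\text{cohomological term depending only on } \tau^*[\kom_0]=[\kom_0]\bigr)$, and the cohomological term vanishes since $\tau$ is isotopic to the identity. Since $\I$ is $\G$-equivariant with respect to the translation action, restriction to the slice $\mathring{\cH}(M,\kom_0)$ is well-defined and we obtain a $\G$-invariant functional.

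For coercivity the plan is to argue by contradiction. Suppose there exists a sequence $\{\varphi_j\}\subset \mathring{\cH}(M,\kom_0)$ with $\M(\varphi_j)\le C$ but $\inf_{\tau\in\G} d_1(0,\tau[\varphi_j])\to\infty$. By choosing nearly minimizing $\tau_j$, we may replace $\varphi_j$ with $\tau_j[\varphi_j]$ and assume $d_1(0,\varphi_j) =: T_j \to \infty$. The key tool is the Finsler geometry of $(\cH,d_1)$ developed by Darvas: one can join $0$ to $\varphi_j$ by a unit-speed finite-energy geodesic segment $[0,T_j]\ni s\mapsto \psi_s^{(j)}$ in the $d_1$-completion $\cE^1$, and these segments subconverge (by the Chen-Cheng/Darvas compactness theory in $\cE^1$) to a nontrivial finite-energy geodesic ray $s\mapsto \psi_s$ emanating from $0$, parametrized by arc length.

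The next step is to exploit convexity. By the Berman-Berndtsson theorem, $\M$ is convex along finite-energy weak geodesics in $\cE^1$. Consequently the slope
\[
 \M'_\infty(\psi) := \lim_{s\to\infty}\frac{\M(\psi_s)}{s}
\]
exists in $(-\infty,+\infty]$, and the assumed upper bound on $\M(\varphi_j)$ forces $\M'_\infty(\psi)\le 0$. The decisive point is the following dichotomy, due to Berman-Darvas-Lu and Li (using the non-Archimedean/K-stability machinery, or equivalently a direct Ding-functional argument in the Fano case): existence of a K\"ahler-Einstein metric on $(M,J)$ implies that $\M'_\infty(\psi) > 0$ on every finite-energy geodesic ray $\psi$ which is not generated by a one-parameter subgroup of $\G$; moreover, after quotienting by the $\G$-action one can arrange the ray to be $L^1$-orthogonal to $\G$-orbits, ruling out the degenerate case. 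This contradicts $\M'_\infty(\psi)\le 0$, proving coercivity.

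The hard part of this program is the last step: turning the existence of a K\"ahler-Einstein metric (equivalently, reduced $K$-polystability in the toric setting, which holds by vanishing of the Futaki invariant together with \cite{WZ}) into the strict positivity of the asymptotic slope of $\M$ along non-trivial rays modulo $\G$. In the toric case this is accessible via Donaldson's integration-by-parts formula together with the barycenter condition \eqref{Fano}, which identifies $\M'_\infty$ with a boundary-versus-interior integral on $\Pol$ that is positive for any non-affine convex ray-generating function. This provides an explicit and considerably simpler route to the coercivity estimate in the toric setting than the general Darvas-Rubinstein argument.
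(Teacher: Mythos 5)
The paper does not prove Theorem \ref{DR}; it is quoted verbatim from \cite{DR}, so the relevant comparison is with the Darvas--Rubinstein argument itself. Their proof runs through an abstract properness principle: convexity of $\M$ along finite-energy geodesics (Berman--Berndtsson), lower semicontinuity, the compactness of $d_1$-bounded, $\M$-bounded sets (the statement recorded as Theorem \ref{compactness} here), regularity of minimizers, and Bando--Mabuchi uniqueness of K\"ahler--Einstein metrics modulo $\G$. One connects the KE minimizer to a hypothetical non-coercive sequence by geodesic \emph{segments}, extracts a new minimizer at unit distance from the orbit, and contradicts uniqueness. No geodesic rays or slopes at infinity appear.

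Your proposal has two genuine problems. First, the invariance step: for $\tau\in\G$ one has $\M(\tau[\varphi])-\M(\varphi)=\M(\kom_0,\tau^*\kom_0)$ by the cocycle property, and this term is \emph{not} killed by $\tau$ being isotopic to the identity --- its derivative along a one-parameter subgroup is precisely the Futaki character, which is generally nonzero on a Fano manifold. It vanishes here only because the existence of a K\"ahler--Einstein metric forces the Futaki invariant to vanish; your argument omits this and would ``prove'' invariance on any Fano manifold, which is false. Second, and more seriously, your ``decisive point'' --- that KE existence implies $\M'_\infty(\psi)>0$ on every finite-energy geodesic ray not generated by a one-parameter subgroup of $\G$ --- is not an available black box at the level of generality you need: the known proofs of that radial statement either \emph{deduce} it from the Darvas--Rubinstein coercivity you are trying to establish, or require the full non-Archimedean/uniform-K-stability machinery of Berman--Boucksom--Jonsson and Li, which is far heavier than the theorem itself. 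As written the argument is circular. (Your closing remark about Donaldson's integration by parts on the polytope pertains to the $\T$-invariant statement, Theorem \ref{DT-toric}, not to the general Theorem \ref{DR} with $\G=\Aut_0(M,J)$, and in any case only controls rays arising from convex functions on $\Pol$.) The repair is to abandon rays and run the segment-plus-uniqueness argument of \cite{DR}.
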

An important ingredient in proving the above result is the weak compactness, established in \cite{BBEGZ,BDL}. To state it, we denote by 
$\PSH(M, \kom_0)$ the space of $\kom_0$-relative plurisubharmonic functions on $M$, i.e. the usc functions $\varphi \in L^1(M, \kom_0)$ such that $\kom_{\varphi}=\kom_0 + \d\d^c \varphi \ge 0$ in the sense of currents. Furthermore, we denote by $\E(M, \kom_0) \subset \PSH(M, \kom_0)$ the subspace of elements of \emph{full Monge-Amp\`ere mass}, i.e.
\[ \E(M, \kom_0) = \Big\{ \varphi \in \PSH(M, \kom_0) \, \Big| \, \int_M \kom_{\varphi}^{[m]} = \int_M \kom_0^{[m]}=\Vol(M, \kom_0)\Big\}.\]
By the results in \cite{GZ,BK},  each $\varphi \in \E(M, \kom_0)$ can be weakly approximated (as a current) by a sequence of smooth functions $(\varphi_j)_j \in \cH(M, \kom_0)$ and for any such sequence $\lim_{j\to \infty} \kom_{\varphi_j}^{[m]} = \kom_{\varphi}^{[m]}$ in the sense of measures. We can take the latter as a definition of $\kom_{\varphi}^{[m]}$ when $\varphi \in \E(M, \kom_0)$. Following \cite{GZ}, one further introduces the sub-space of elements with \emph{full Monge-Amp\`ere mass and finite energy}
\[\E^1(M, \kom_0) = \Big\{ \varphi \in \E(M, \kom_0) \, \Big| \, \int_M |\varphi| \, \kom_{\varphi}^{[m]} < \infty \Big\}.\]
The central fact~\cite{Da} in this theory is that $d_1$ extends to define a \emph{complete} distance on $\E^1(M, \kom_0)$, such that $(\E^1(M, \kom_0), d_1)$ is a complete geodesic space in which $\cH(M, \kom_0)$ is densely embedded,  and the $d_1$-convergence on $\E^1(M, \kom_0), d_1)$ is stronger than both the $L^1(M, \kom_0)$ and the weak convergence of $(1,1)$-currents.  Furthermore, $\M$ naturally extends to a continuous and  lsc  functional   on $\E^1(M, \kom_0)$.  There is a key compactness principle:
\begin{thm}\label{compactness}\cite{BBEGZ, BDL} If $(\varphi_j)_j \in \E^1(M, \kom_0)$ is a sequence with
\[ d_1(0, \varphi_j) <C, \qquad \M(\varphi_j) < C,\]
then there exists a subsequence $(\varphi_{j_k})_k$ which converges  with respect to $d_1$ to an element $\varphi_0 \in \E^1(M, \kom_0)$.
\end{thm}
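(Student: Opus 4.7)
The plan is to combine weak compactness in $\PSH(M,\kom_0)$ with the coercivity-type information carried by the Mabuchi energy, and then upgrade the resulting weak convergence to the strong $d_1$-topology via Darvas' Finsler-type description of the metric $d_1$ on $\E^1(M,\kom_0)$.

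\emph{Step 1: extract a weak limit.} First I would show that the hypothesis $d_1(0,\varphi_j) < C$ forces a uniform $L^1(M,\kom_0)$-bound on $(\varphi_j)$. This can be seen by writing $d_1(0,\varphi) = \I(\varphi) + \I(0) - 2\I(P(\varphi,0))$, where $P(\varphi,\psi)$ denotes the rooftop envelope (the largest $\kom_0$-psh function $\leq \min(\varphi,\psi)$), together with standard estimates comparing $L^1$-norms of normalized $\kom_0$-psh functions to the Aubin-Mabuchi functional $\I$. By Hartogs' weak compactness for sup-normalized $\kom_0$-plurisubharmonic functions, there is then a subsequence $(\varphi_{j_k})$ converging in $L^1(M,\kom_0)$ and pointwise almost everywhere to some $\varphi_\infty \in \PSH(M,\kom_0)$.

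\emph{Step 2: full Monge-Amp\`ere mass and finite energy.} Next I would exploit the Chen-Tian-type decomposition of the Mabuchi energy,
\[
\M(\varphi) = H(\varphi) - \bigl(a\, \I(\varphi) - 2\I^{\rho_0}(\varphi)\bigr),
\]
where $H(\varphi) = \int_M \log \bigl( \kom_\varphi^{[m]}/\kom_0^{[m]}\bigr)\, \kom_\varphi^{[m]}$ is the relative entropy. Since the $d_1$-bound controls the Aubin-Mabuchi terms, the hypothesis $\M(\varphi_j) < C$ yields a uniform entropy bound $H(\varphi_{j_k}) \leq C'$. The entropy-tightness machinery of Berman-Boucksom-Guedj-Zeriahi then prevents the Monge-Amp\`ere mass of $\varphi_{j_k}$ from escaping in the weak limit, placing $\varphi_\infty$ in $\E(M,\kom_0)$; lower semicontinuity of the pluricomplex energy along $L^1$-convergent sequences then promotes $\varphi_\infty$ into $\E^1(M,\kom_0)$.

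\emph{Step 3: upgrade to $d_1$-convergence.} Using Darvas' identity
\[
d_1(\varphi,\psi) = \I(\varphi) + \I(\psi) - 2\I\bigl(P(\varphi,\psi)\bigr),
\]
showing $d_1(\varphi_{j_k},\varphi_\infty) \to 0$ reduces to the continuity statement $\I\bigl(P(\varphi_{j_k},\varphi_\infty)\bigr) \to \I(\varphi_\infty)$. The entropy bound from Step 2 upgrades $L^1$-convergence of the potentials to weak convergence of the Monge-Amp\`ere measures $\kom_{\varphi_{j_k}}^{[m]} \rightharpoonup \kom_{\varphi_\infty}^{[m]}$, which combined with the $L^1$-convergence of the potentials gives $L^1$-convergence of the rooftop envelopes together with the needed matching of the $\I$-values.

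The main obstacle is Step 3: $L^1$-convergence of $\kom_0$-psh functions is strictly weaker than $d_1$-convergence, and only the entropy control extracted from the Mabuchi bound rules out concentration of Monge-Amp\`ere mass that would create a $d_1$-gap between $\varphi_{j_k}$ and $\varphi_\infty$. All the nontrivial pluripotential-theoretic input (continuity of the complex Monge-Amp\`ere operator along entropy-bounded sequences, and Darvas' envelope characterization of $d_1$) enters precisely here.
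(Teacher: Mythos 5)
The paper does not actually prove this statement: Theorem~\ref{compactness} is quoted from \cite{BBEGZ, BDL} and used as a black box in the proof of Theorem~\ref{t:convthm}, so there is no in-paper argument to compare against. Measured against the proofs in the cited references, your outline is the standard one and the reductions are correctly identified: the $d_1$-bound controls $\int_M|\varphi_j|\,\kom_0^{[m]}$ and yields an $L^1$-convergent subsequence with limit $\varphi_\infty\in\PSH(M,\kom_0)$; the Chen--Tian decomposition of $\M$ converts the Mabuchi bound into a uniform bound on the entropy term $\int_M\log\bigl(\kom_{\varphi_j}^{[m]}/\kom_0^{[m]}\bigr)\kom_{\varphi_j}^{[m]}$ (note that in Definition~\ref{d:MabuchiGK} the energy part enters with a $+$ sign rather than the $-$ you wrote, which is immaterial since the $d_1$-bound controls it in both directions); and Darvas' envelope formula reduces $d_1$-convergence to convergence of the Aubin--Mabuchi energies.

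The gap, such as it is, sits exactly where you say the main obstacle is. Knowing that $\varphi_{j_k}\to\varphi_\infty$ in $L^1$ and that the Monge--Amp\`ere measures converge weakly does \emph{not} formally yield $\I(\varphi_{j_k})\to\I(\varphi_\infty)$ or $\I\bigl(P(\varphi_{j_k},\varphi_\infty)\bigr)\to\I(\varphi_\infty)$: the functional $\I$ is only semicontinuous for the $L^1$ topology, and the entire content of the compactness theorems of Berman--Boucksom--Eyssidieux--Guedj--Zeriahi and Berman--Darvas--Lu is to show that no Monge--Amp\`ere energy is lost along an entropy-bounded sequence. The mechanism is the uniform $L\log L$ integrability of the densities $\kom_{\varphi_{j_k}}^{[m]}/\kom_0^{[m]}$ furnished by the entropy bound, fed into the quasi-continuity and monotone-convergence machinery for the Monge--Amp\`ere operator; your sentence asserting that weak convergence of the measures ``gives $L^1$-convergence of the rooftop envelopes together with the needed matching of the $\I$-values'' states the conclusion rather than deriving it. As a reduction to the cited results your sketch is accurate; as a self-contained proof, that single step is where all of the pluripotential-theoretic work remains to be written out.
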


In the toric case considered in this paper,  we  shall rather work with the subspace
\[\cH_{\T}(M, \kom_0) := \cH(M, \kom_0) \cap C^{\infty}_{\T}(M) \]
of $\T$-invariant relative K\"ahler potentials,  and consider instead of $\G = \Aut_0(M, J)$  the complex torus $\T_{\C}$. The coercivity principle of \cite[Theorem~3.4]{DR} still applies in the $\T$-relative setting (see \cite{hisamoto,ZZ}):
\begin{thm}\label{DT-toric} If $(M, J, [\kom_0], \T)$ admits a K\"ahler-Einstein metric, then the Mabuchi   energy $\M : \cH_{\T}(M, \kom_0) \to \R$ is $\T_{\C}$-invariant and $\T_{\C}$-coercive,  i.e. there are uniform constants $\lambda>0, \delta$ such that 
\[ \M (\varphi) \ge  \lambda \inf_{\tau \in \T_{\C}} d_1(0, \tau[\varphi]) - \delta, \qquad \forall \varphi \in \mathring{\cH}_{\T}(M, \kom_0). \]
\end{thm}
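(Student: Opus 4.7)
The plan is to reduce the statement to the $\G = \Aut_0(M,J)$-relative coercivity of Darvas--Rubinstein (Theorem~\ref{DR}) by exploiting the toric symmetry and showing that, on the space of $\T$-invariant potentials, the infimum of $d_1(0,\tau[\varphi])$ over $\G$ is essentially realized by elements of $\T_{\C}$. The $\T_{\C}$-invariance of $\M$ on $\mathring{\cH}_{\T}(M,\kom_0)$ is immediate: any $\tau \in \T_{\C}\subset \G$ acts by pullback through a biholomorphism of $(M,J)$ preserving the class $[\kom_0]$, and $\M$ is invariant under all such pullbacks.

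For the coercivity, I would prove the following reduction lemma: there is a constant $C$ (depending only on $(M,J,\T,[\kom_0])$) such that for every $\varphi\in\mathring{\cH}_{\T}(M,\kom_0)$ and every $\tau\in\G$, one can find $\tau_0\in\T_{\C}$ with $d_1(0,\tau_0[\varphi]) \le C\bigl(d_1(0,\tau[\varphi]) + 1\bigr)$. Once this is in place, applying Theorem~\ref{DR} to the $\T$-invariant $\varphi$ gives
\[
\M(\varphi) \ge \lambda \inf_{\tau\in\G} d_1(0,\tau[\varphi]) - \delta \ge \frac{\lambda}{C}\inf_{\tau_0\in\T_{\C}} d_1(0,\tau_0[\varphi]) - (\delta + \lambda),
\]
which is the claimed estimate after relabelling constants. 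To establish the reduction lemma I would use the Cartan decomposition $\G = K\cdot\exp(\sqrt{-1}\mathfrak{k})$ of the reductive group $\G$ relative to a maximal compact subgroup $K\supset\T$. Pullback by elements of $K$ is a $d_1$-isometry on $\cH(M,\kom_0)$, so up to an isometric factor one can assume $\tau = \exp(\sqrt{-1}\xi)$ with $\xi\in\mathfrak{k}$. Because $\varphi$ is $\T$-invariant, averaging $\tau[\varphi]$ over the compact torus $\T$ is a $d_1$-contraction and effectively replaces $\xi$ by its projection $\xi_{\tor}\in\tor$; the resulting $\tau_0 := \exp(\sqrt{-1}\xi_{\tor})$ lies in $\T_{\C}$ and satisfies the required bound by convexity of $d_1$ along one-parameter subgroups.

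The main obstacle will be making this averaging step quantitatively precise, in particular justifying that $\T$-averaging of $\tau[\varphi]$ reduces $d_1(0,\tau[\varphi])$ up to at most a bounded additive error. I would carry this out by passing to the toric reformulation: under the correspondence $\varphi\leftrightarrow u\in\mathcal{S}(\Pol,\La)$, the $\T_{\C}$-action becomes addition of affine-linear functions to $u$, and the $d_1$-distance between $\T$-invariant potentials is equivalent to the $L^1$-difference of their symplectic potentials on $\Pol$ with respect to a natural measure, as shown in~\cite{Darvas-et-al, guedj-toric}. In these convex-analytic coordinates, the projection from $\cH(M,\kom_0)$ to $\T$-invariant pullbacks corresponds to averaging a convex function on $\Pol$ along the fibres of the quotient by $\T_{\C}$-translations, and the comparison between the $\G$-infimum and the $\T_{\C}$-infimum becomes transparent. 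An alternative, more abstract route following Hisamoto would be to reinterpret the $\T_{\C}$-relative coercivity as genuine coercivity on the quotient $\mathring{\cH}_{\T}(M,\kom_0)/\T_{\C}$, and then invoke a non-Archimedean uniform stability criterion that is equivalent, by the result of Wang--Zhu quoted above the statement, to the existence of a $\T$-invariant K\"ahler--Einstein metric in $[\kom_0]$.
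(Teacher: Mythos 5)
The paper does not actually prove Theorem~\ref{DT-toric}: it is quoted as a known statement, with the proof deferred to \cite{hisamoto,ZZ}, where the toric coercivity is obtained by working directly with symplectic potentials on the Delzant polytope (Donaldson's formula for $\M$ and coercivity modulo affine-linear functions). Your overall strategy --- deducing the $\T_{\C}$-relative statement from the $\G$-relative Theorem~\ref{DR} by comparing $\inf_{\tau\in\G} d_1(0,\tau[\varphi])$ with $\inf_{\tau_0\in\T_{\C}} d_1(0,\tau_0[\varphi])$ on $\T$-invariant potentials --- is a legitimate alternative route (this ``reduction to the centralizer $Z_{\G}(\T)=\T_{\C}$'' is how the statement is handled in parts of the later literature), but as written the central step has a genuine gap.

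The gap is the reduction lemma, and specifically the averaging mechanism offered for it. First, the $\T$-average $P\psi=\int_{\T}\sigma[\psi]\,d\sigma$ is a \emph{linear} average of potentials, which is not a geodesic or barycentric operation for $d_1$; that $P$ contracts $d_1$ (even up to bounded error) toward $\T$-invariant potentials is not a standard fact and would itself require proof, e.g.\ via Darvas's quasi-isometric description of $d_1$ in terms of $\int_M\brs{\psi_0-\psi_1}\big(\kom_{\psi_0}^{[m]}+\kom_{\psi_1}^{[m]}\big)$. Second, and more seriously, there is no justification for the claim that $P\big(\exp(\i\xi)[\varphi]\big)$ is $d_1$-comparable to $\exp(\i\xi_{\tor})[\varphi]$: deciding which element of $\T_{\C}$ nearly realizes the $\G$-infimum is precisely the content of the reduction and cannot be read off from the Cartan decomposition. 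Indeed, when $\xi$ is orthogonal to $\tor$ your recipe returns $\tau_0=\mathrm{id}$, so the lemma would assert that non-toric automorphisms cannot decrease $d_1(0,\cdot)$ of a $\T$-invariant potential by more than a bounded amount --- which is exactly the nontrivial statement to be proven, not a consequence of convexity along one-parameter subgroups. The fallback routes you mention at the end (the $L^1$/polytope reformulation, or Hisamoto's approach) are the right fixes and essentially coincide with what the cited references do, but they replace rather than complete the averaging argument. A minor further point: the $\T_{\C}$-invariance of $\M$ is not automatic from invariance of the K\"ahler class; it uses the vanishing of the Futaki character, which here comes from the K\"ahler--Einstein hypothesis.
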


We next use Theorem~\ref{DT-toric} to obtain weak convergence of the global solution of NGKRF in the toric K\"ahler-Einstein case.

\begin{proof}[Proof of Theorem~\ref{t:convthm}] 
Let $\varphi_t := \phi_t - \phi_0 \in \cH_{\T}(M, \omega_0)$ be the relative K\"ahler potentials along the K\"ahler reduction \eqref{unnormalized} of the NGKRF, and $\mathring{\varphi}_t \in \mathring{\cH}_{\T}(M, \kom_0) ={\bf I}^{-1}(0)$ the corresponding normalized $\kom_0$-relative K\"ahler potentials.  Using Theorem~\ref{DT-toric}, Corollary~\ref{Mabuchi-monotone} and \eqref{M-tames}, we have along the K\"ahler reduction \eqref{unnormalized} of the normalized generalized K\"ahler Ricci flow,
\[ \lambda \inf_{\tau \in \T_{\C}} d_1(0, \tau[\mathring{\varphi}_t]) \leq  \M(\kom_{\varphi_t}) + \delta  \leq \M(F_{\varphi_t}) + \delta \leq  {\M}(F_0)+ \delta.\]
Furthermore, by  \cite[Prop.~6.8]{DR} (using that  $\T_{\C}$ is reductive), for each $t\in [0, \infty)$ there exists  $\tau_t \in \T_{\C}$ such that 
\[\inf_{\tau \in \T_{\C}} d_1(0, \tau[\mathring{\varphi}_t])= d_1(0, \tau_t[\mathring{\varphi}_t]). \]
We thus get uniform estimates
\[ d_1(0, \tau_t[\mathring{\varphi}_t])  \leq C, \qquad \M(\tau_t[\mathring{\varphi}_t])= \M(\varphi_t) < C \]
which, by Theorem~\ref{compactness} and the fact that $\E^1_{\T}(M, \kom_0)$ is $d_1$-closed as a subset of $\E^1(M, \kom_0)$~\cite{DR}, proves the theorem. 
\end{proof}

\end{document}